\newcommand{\Q}{{\mathbb Q}}                   
\newcommand{\C}{{\mathbb C}}                   
\newcommand{\ZZ}{{\mathbb Z}}                   
\newcommand{\QQ}{{\mathbb Q}}                   
\newcommand{\CC}{{\mathbb C}}                   
\newcommand{\CP}[1]{\mathbb{P}^{#1}}      
\DeclareMathOperator{\Spec}{Spec}
\newtheorem{theorem}{Theorem}[section]
\newtheorem{proposition}[theorem]{Proposition}
\newtheorem{corollary}[theorem]{Corollary}
\newtheorem{lemma}[theorem]{Lemma}
\newtheorem{conjecture}[]{Conjecture}
\newtheorem{defn}[theorem]{Definition}
\theoremstyle{definition}
\newtheorem{definition}[theorem]{Definition}
\theoremstyle{remark}
\newtheorem{remark}[theorem]{Remark}
\newtheorem{example}[theorem]{Example}
\title{Blow-ups and the quantum spectrum of surfaces}
\author{\'Ad\'am Gyenge}
\address{Department of Algebra and Geometry, Institute of Mathematics, 
Budapest University of Technology and Economics, 
M\H{u}egyetem rakpart 3, 1111, 
Budapest, Hungary}
\email{Gyenge.Adam@ttk.bme.hu}
\author{Szil\'ard Szab\'o}
\address{Institute of Mathematics, E\"otv\"os Lor\'and University, P\'azm\'any P\'eter s\'et\'any 1/C, 1117, Budapest, Hungary and Alfr\'ed R\'enyi Institute of Mathematics, Re\'altanoda utca 13-15, 1053, Budapest, Hungary}
\email{szilard.szabo@ttk.elte.hu}
\subjclass[2020]{Primary 14N35; Secondary 53D45, 14E05}
\keywords{Quantum connection, spectrum, birational geometry, surface, Newton polygon}
\begin{document}

\begin{abstract}
We investigate the behaviour of the spectrum of the quantum (or Dubrovin) connection of smooth projective surfaces under blow-ups. 
Our main result is that for small values of the parameters, the quantum 
spectrum of such a surface is asymptotically the union of the quantum 
spectrum of a minimal model of the surface and a finite number of additional
points located ``close to infinity'', that correspond bijectively to the 
exceptional divisors. 
This proves a conjecture of Kontsevich in the surface case.
\end{abstract}

\maketitle

\tableofcontents 

\section{Introduction}

Let $X$ be a nonsingular projective variety over $\C$. 
Genus $g$ Gromov--Witten invariants of $X$ count holomorphic maps from 
curves of genus $g$ into $X$, along with certain degenerations. 
Since they were first 
introduced~\cite{witten1990two,kontsevich1994gromov, ruan1995mathematical},
these invariants have attracted great interest, and the literature of the 
area has become extensive. 

A particularly elegant geometric formulation of the information contained 
by genus $0$ Gromov-Witten invariants of $X$ was discovered by 
Dubrovin~\cite{dubrovin1996geometry}, who used them to construct a 
Frobenius manifold structure on $H^{\ast}(X,\C)$ (under some conditions). 
Among other data, this involves a flat metric, with Levi--Civita connection 
denoted by $\nabla$. 
The general theory of Frobenius manifolds then shows that $\nabla$ 
admits a meromorphic flat deformation parameterized by a parameter $u\in\C$, 
that is, a connection on the vector bundle $\mathcal{H}$ with fiber 
$H^{\ast}(X,\C)$ over $\C \times H^{\ast}(X,\C)$. 
This flat deformation is called the quantum (or Dubrovin) connection of $X$; 
for a more precise description see Section~\ref{subsec:dubrovin} below. 
For ease of notation, we denote the Dubrovin connection by $\nabla$ too. 
For a cohomology class $\tau \in H^{\ast}(X,\C)$, the deformed flat 
connection on the restriction $\mathcal{H}_{\tau}$ of $\mathcal{H}$ to 
$\C\times\{ \tau \}$ is of the form 
\[\nabla_{\frac{\partial}{\partial u}}^{(\tau)}=\frac{\partial}{\partial u} + \frac{1}{u^2}K *_{\tau}+\frac{1}{u}G,\]
where
$K$ is a deformation of the operator which takes quantum product
(see Definition~\ref{def:quantum_product}) 
with the first Chern class of $X$, and $G$ is a certain grading operator. For general $X$, the deformation $\nabla$ is not known to be convergent for 
general $\tau\neq 0$, but in certain toric cases this is known,
see~\cite{coates2015convergence}. 

In this article we consider the case of smooth projective surfaces $X$.
If $X$ is a rational surface, then it is proved in~\cite[Section~2.4,~Remark~3]{itenberg2004logarithmic, itenberg2005logarithmic} using geometric ideas that convergence holds for a special choice of one of 
the parameters (namely, when $t_p = 1$ in our notations). 
In the Appendix, we give an independent algebraic proof of convergence of the 
potential for surfaces in a neighbourhood of $t_p = 0$ solely based on the 
known recurrence relations for the GW-invariants. 

For fixed $\tau$, the Dubrovin connection 
$\nabla^{(\tau)}$ has a logarithmic singularity 
at $u = \infty$, but has an order two pole, and hence an irregular singularity 
of Poincar\'e--Katz rank $\leq 1$ at $u=0$. 
Write $QC(X)_{\tau} \coloneqq (\mathcal{H}_{\tau},\nabla^{(\tau)})$.
By virtue of the Turrittin--Hukuhara--Levelt (THL)  theorem~\cite{turrittin1955convergent,hukuhara1942points,levelt1975jordan}, 
in the unramified case 
the restriction of $QC(X)_{\tau}$ to the formal neighbourhood of $u=0$ 
admits the following orthogonal decomposition 
\begin{equation}\label{eq:THLdecomp}
 QC(X)_{\tau}\otimes_{\mathbb{C}\{ u\}} \mathbb{C}\llbracket u \rrbracket \cong 
\oplus_{\lambda \in \mathrm{Spec}(K)} 
(\mathbb{C}\llbracket u \rrbracket ,d+d(\lambda/u)) 
\otimes_{\mathbb{C}\{ u \}} \mathcal{F}_{\lambda} 
\end{equation}
where $\mathrm{Spec}(K) = \mathrm{Spec}(K_{X,\tau})$ is the set of eigenvalues of $K*_{\tau}$ on $H^{\ast}(X,\C)$ and $\mathcal{F}_{\lambda}$ is a free $\mathbb{C}\{u\}$-module with regular singular connection. 
(Of course, the spectrum and the corresponding regular singular factors depend 
on $\tau$, however we omit to spell out this dependence for ease of notation.) 
We note that if $K$ is regular semi-simple then $QC(X)_{\tau}$ is 
unramified and in particular the above decomposition holds with rank $1$ 
connections $\mathcal{F}_{\lambda}$.
This observation motivates the study of the spectrum of the operator $K$. 
The question is closely related to the \emph{Gamma conjecture} of Dubrovin and to semiorthogonal decompositions of the K-group $K(X)$. For a concise overview of the picture, see for example~\cite[Section 8]{iritani2020global} or~\cite[Section~1.1]{Cotti_Dubrovin_Guzzetti}. 
The spectrum of $K$ is often called the \emph{quantum spectrum} of $X$.

Let $Z \subset X$ be a smooth closed subvariety of codimension $n\geq 2$, and let  $\widetilde{X}$ be the blow-up of $X$ in $Z$.
It is known from the work of Orlov~\cite{orlov1993projective} that the K-group of 
$\widetilde{X}$ can be expressed from those of $X$ and $Z$. 
More precisely, the bounded derived categories of coherent sheaves have 
a semiorthogonal decomposition
\[ D(\widetilde{X})=\langle D(X),\underbrace{D(Z),\dots,D(Z)}_{(n-1)\textrm{ times}} \rangle.\]

It is a natural question to ask whether this decomposition is 
reflected at the level of the quantum connection. 
A conjecture of M.~Kontsevich in this 
direction can roughly be stated as follows.
\begin{conjecture}[{Blow-up conjecture for the quantum spectrum, M.~Kontsevich,~\cite[pp.~13-14]{kontsevich2021blowup}}]
\label{conj:BU}
Let $A \subset \mathbb{C}$ be a fixed compact set containing $\mathrm{Spec}(K_{X,\tau})$. 
There exists a nonlinear formal invertible map 
\begin{align*}
    H^{\ast}(\widetilde{X},\C) & \to H^{\ast}(X,\C) \oplus H^{\ast}(Z,\C)^{\oplus (n-1)} \\
    \tilde{\tau} & \mapsto (\tau, \tau') 
\end{align*}
such that the following conditions hold. 
\begin{enumerate}
    \item The intersection of $\mathrm{Spec}(K_{\widetilde{X}, \tilde{\tau}})$ 
    with $A$ converges to $\mathrm{Spec}(K_{X,\tau})$, as $\tilde{\tau}\to 0$. 
    \item For $|\tilde{\tau}|\ll 1$, the set $\mathrm{Spec}(K_{\widetilde{X}, \tilde{\tau}})\setminus A$ is equal to the union of a copy of $\mathrm{Spec}(K_{Z,\tau'})$ for some $\tau' = \tau'(\tilde{\tau})$ and its multiples by all $(n-1)$-th roots of unity, where $n = \operatorname{codim}_X(Z)$. 
\end{enumerate}
\end{conjecture}

Our main result confirms (a slightly refined version of) this expectation for surfaces $X$. 
For its precise statement, let $X_{\min}$ be a minimal model of the surface $X$, 
so that $X_r= X$ can be obtained from $X_{\min}$ by blowing up a finite number $r$ of points. 
Our main result compares the spectrum of $K_r$ constructed from $QC( X_r)$ with the spectrum of $K_{\min}$ constructed from $QC(X_{\min})$.  
As we will see, $QC(X_{\min})$ is defined over 
\[\Spec \CC \{ q_1,\dots,q_m,t_p \} \] 
while $QC(X_r)$ is defined over its extension 
\[\Spec \CC \{ q_1,\dots,q_m,q_{m+1},\dots,q_{m+r},t_p \} [q_{m+1}^{-1},\dots,q_{m+r}^{-1}] \]
where $m$ is the second Betti number of $X_{\min}$, and where we denote by $\C \{ q, t \}$ the ring of power series with a positive radius of convergence. 
Let us denote by  
$\{\lambda_0,\dots,\lambda_{m+1}\}$ and $\{\mu_0,\dots,\mu_{m+r+1}\}$ 
the spectra of $K_{\min}$ and $K$ respectively.

\begin{theorem}[Proposition~\ref{prop:P2domain}, {Theorem~\ref{thm:main}}]  
\label{thm:intromain}
Let $X_r$ be a smooth projective surface with a minimal model $X_{\min}$.
Let 
\[U \subset \Spec \CC\{ q_1,\dots ,q_{m+r} \}\] 
be any closed conical subset for the analytic topology that 
intersects the coordinate hyperplanes $\{q_i = 0\}$, $1 \leq i \leq m+r$ only at the origin. 
We assume $(q_1,\dots ,q_{m+r})\in U$ and set $\tau = (t_0, q_1,\dots ,q_{m+r}, t_p)$. 
\begin{enumerate}
\item There exists $\varepsilon>0$ such that $\nabla^{(\tau)}$ converges for $\Vert (q_1,\dots ,q_{m+r}) \Vert < \varepsilon, |t_p |<1$. 
\item For fixed $|t_p|<1$, the spectrum of $K_r$ converges to the 
union of the spectrum of $K_{\min}$ and the inverses of the new variables 
$q_{m+1},\dots,q_{m+r}$, as $(q_1, \ldots , q_{m+r})\to \vec{0}$ in $U$. 
Namely, up to a suitable relabeling,
$$
    \lim \frac{\lambda_j (q_1,  \ldots , q_{m}, t_p )}
    {\mu_j (q_1,  \ldots , q_{m+r}, t_p)} = 1 
    \quad (0 \leq j \leq m+1 )
$$
and 
$$
    \lim \mu_j (q_1,  \ldots , q_{m+r}, t_p ) q_{j-1} = 1 \quad (m+2 \leq j \leq m+r+1). 
$$
\end{enumerate}
\end{theorem}


In \cite{iritani2023quantum} a complete proof of Conjecture~\ref{conj:BU} is proposed. 
The approach of H.~Iritani is based on Fourier analysis of equivariant quantum cohomology in the spirit of~\cite{TelemanICMtalk}, applied to the $\mathbb{C}^{\times}$-variety $\operatorname{Bl}_{Z\times \{ 0 \}} (X \times \CP1)$. 
Our Theorem~\ref{thm:main} gives an alternative, more elementary and more explicit proof in the surface case by combinatorially analyzing the Newton polygon of the characteristic polynomial of the quantum connection matrix. 

Our main Theorem~\ref{thm:main} also gives a new, alternative proof of \cite{bayer2004semisimple} in the surface case. Our calculations are however more general and completely explicit in terms of the spectral parameters. We also investigate the quantum spectrum of the minimal model to get a complete picture on the quantum spectrum of any smooth surface.

It is worth to point out that the last equality of Theorem~\ref{thm:intromain} says that the new eigenvalues of $K_r$ (as compared to those of $K_{\min}$) converge to the inverse of the corresponding $q$-variable. In particular, it is essential to take the limit inside the region $U$. The existence of negative exponents has already been observed in the literature, see for instance \cite[Section~2]{gottsche1998quantum} or \cite[Section~4]{bayer2004semisimple}. On the other hand, the degree $0$ terms in the expansion of the Gromov--Witten potential, and hence in that of $K_r$, come from cup product in cohomology.

The structure of the paper is as follows. In Section~\ref{sec:notback} we recall the relevant facts about Gromov--Witten (sometimes abbreviated as GW) 
invariants and the Dubrovin connection as well as the main results of \cite{gottsche1998quantum, hu2006quantum} on the behaviour of GW invariants of surfaces under blow-ups. In Section~\ref{sec:opK} we analyze the structure of the operator $K$. In Section~\ref{sec:newtonpoly}  we infer about the behaviour of the Newton polygon of the characteristic polynomial of $K$ using a case-by-case analysis according to the three main classes of minimal surfaces (the projective plane, ruled surfaces and surfaces with numerically effective canonical class). In Section~\ref{sec:spectrum}
we prove our main Theorem~\ref{thm:main}. 
The Appendix is devoted to a new proof of analyticity of the GW potential for rational surfaces. 

\subsection*{Acknowledgements} The authors are grateful to Davide Guzzetti, Gergely Harcos, Jianxun Hu, Maxim Kontsevich, Bal\'azs Szendr\H{o}i and Aleksey Zinger for helpful comments and discussions. \'A.~Gy.~was supported by the János Bolyai Research Scholarship of the Hungarian Academy of Sciences and by the European Union's Horizon 2020
research and innovation programme under the Marie Sk\l odowska-Curie grant
agreement No.\ 891437. 
Sz.~Sz. benefited of support from the grants K146401 and KKP144148 of the National Research, Development and Innovation Office of Hungary. 

\section{Notation and background}
\label{sec:notback}

\subsection{Gromov--Witten invariants}
Let $X$ be a nonsingular projective surface, $\beta \in H_2(X,\QQ)$ and $k$ be a positive integer. Denote by $\overline{\mathcal{M}}_k(X,\beta)$ the moduli space of genus zero $k$-pointed stable maps $f:C \to X$ with the homology class $f_{\ast}[C]=\beta$. Denote by $[\overline{\mathcal{M}}_k(X,\beta)]^{\mathrm{Vir}}$ the virtual fundamental class~\cite{behrend1997intrinsic} of 
$\overline{\mathcal{M}}_k(X,\beta)$. 
To each marked point there corresponds an evaluation map
\[ \mathrm{ev}_i : \overline{\mathcal{M}}_k(X,\beta) \to X, \quad 1 \leq i \leq k.\]
Given cohomology classes $\beta_1,\dots,\beta_k \in H^{\ast}(X)$, the associated (genus zero) Gromov-Witten invariant is defined as
\[ I_{\beta}(\beta_1\dots\beta_k)= \int_{[\overline{\mathcal{M}}_k(X,\beta)]^{\mathrm{Vir}}} \prod_i \mathrm{ev}_i^{\ast}(\beta_i). \]

Let $B \subset H_2(X,\QQ)$ be the effective cone, i. e. the semigroup of 
non-negative linear combinations of classes of algebraic curves on $X$ with 
rational coefficients. 
It is known that $I_{\beta}(\beta_1,\dots,\beta_k)=0$ for any $\beta \not\in B$,
because then $\overline{\mathcal{M}}_k(X,\beta)$ is empty.

Let $T_0 = 1 \in H^0(X,\QQ)$, $T_1,\dots, T_m$ be a basis of $H^2(X,\QQ)$, and $T_{m+1} = T_p \in H^4(X,\QQ)$ be the (Poincar\'e dual of the) class of a point. Denote by $T_i^{\vee}$ the corresponding elements of the dual basis: $T_i (T_j^{\vee})=T_j^{\vee} (T_i)=\delta_{ij}$. For variables $t_0, \, q_1, \, \dots, q_m, \, t_p$ (which will be also abbreviated as $q,t$), the (genus zero) 
Gromov--Witten potential is defined as the formal Laurent series
\[ F(q,t)=\sum_{\substack{n \geq 0\\ \beta \in B\setminus \{0\}} } I_{\beta}(T_p^{n})q_1^{\int_\beta T_1} \cdots q_m^{\int_\beta T_m}\frac{t_p^{n}}{n!}   \]
in the ring $\Q \llbracket q,t \rrbracket [q^{-1}]$. 
Notice that there may exist negative exponents of $q$. However, as we will compute explicitly, for surfaces the exponents are bounded below, therefore $F$ belongs to the given ring.

Let \[ \partial_i = 
\begin{cases}
q_i \frac{\partial}{\partial q_i} & i \not\in \{0,p\} \\
\frac{\partial}{\partial t_i} & i \in \{0,p\}
\end{cases}
\]
and denote $F_{ijk}=\partial_i\partial_j\partial_k F$. 
\begin{definition}\label{def:quantum_product}
The \emph{quantum product} of $T_i$ and $T_j$ is defined as
\[ T_i \ast T_j= T_i \cdot T_j + \sum_{e,f}F_{ije}g^{ef}T_f\]
where $(g_{ij})$ is the matrix of intersection numbers $(T_i \cdot T_j)$ and $(g^{ij})$ is its inverse. 
The \emph{small quantum product} is the restriction of the quantum product to the subspace $t_0 = 0 = t_p$.
\end{definition} 
The quantum product induces a $\Q \llbracket q,t \rrbracket [q^{-1}]$-algebra structure on the free $\Q \llbracket q,t \rrbracket [q^{-1}]$-module generated by $T_0,\dots,T_p$.

\subsection{The Dubrovin connection}
\label{subsec:dubrovin}

It was observed by Dubrovin \cite{dubrovin1996geometry} that the quantum product naturally gives rise to a meromorphic connection, which is part of a Frobenius 
manifold structure on $H^*(X,\CC )$ for the nonsingular pairing given by 
cup product. 

Let 
\[\tau=t_0(\tau)T_0+\sum_{i} q_i(\tau)T_i + t_m(\tau)T_m  \in H^{\ast}(X, \C).\] 
\begin{definition}\label{ref:Dubrovin_connection}
The \emph{Dubrovin} (or \emph{quantum}) \emph{connection} on the trivial bundle 
\[\mathcal{H}_{\tau} \coloneqq H^{\ast}(X,\Q) \times \Spec \C[u,u^{-1}] 
\to \Spec \C[u,u^{-1}]\] 
is the meromorphic flat connection
\[\nabla_{\frac{\partial}{\partial u}}^{(\tau)}=\frac{\partial}{\partial u} + \frac{1}{u^2}K^{(\tau)}+\frac{1}{u}G\]
on $\mathcal{H}_{\tau}$. 
Here
$K^{(\tau)}$ is the operator of taking quantum product $*$ 
(see Definition~\ref{def:quantum_product}) with the (class of the) 
\emph{Euler vector field}
\begin{equation} 
\label{eq:euler}
c_1(T_{X}) +2t_0T_0-2t_pT_p,
\end{equation}
and the grading operator $G$ is defined on a homogeneous component of the cohomology ring as
\[ G\vert_{H^d(X)}=\frac{d-2}{2} \cdot \mathrm{Id}_{H^d(X)}\]
for any $0 \leq d \leq 4$. 
\end{definition}

Suppose that the potential $F$ converges in $q,t$ or, equivalently, in $\tau$ to an analytic function on some domain. We will see in Lemma~\ref{lem:ruledGWan} and Theorem~\ref{thm:appmain} below that this assumption always holds for nonsingular projective surfaces, which will be the only case we consider. The family of meromorphic connections $\nabla^{(\tau)}$ as $\tau$ ranges over 
$H^{\ast}(X, \C)$ form a flat connection $\nabla$ over the base  
$\Spec \C \{ q, t \} [u^{\pm},q^{-1}]$ (it is an isomonodromic 
family~\cite{dubrovin1998geometry}). 
To make this more precise, let $\mathcal{H}$ stand for the trivial bundle 
\[H^{\ast}(X,\C) \times \Spec \C \{ q,t\} [q^{-1},u,u^{-1}] \to \Spec \C \{ q,t\} [q^{-1},u,u^{-1}]\]  
with fibers $H^{\ast}(X,\C)$. 
In this case $q, t$ and $u$ give a full set of coordinates on the base space. Hence, $\frac{\partial}{\partial q}$, $\frac{\partial}{\partial t}$ and $\frac{\partial}{\partial u}$
give a global basis of the tangent bundle of the base. 
The formula for the extended connection $\nabla$ in the remaining directions is
\[
\begin{gathered}
\nabla_{\frac{\partial}{\partial t_i}}=\frac{\partial}{\partial t_i} + \frac{1}{u}A_i \\ 
\nabla_{\frac{\partial}{\partial q}}=\frac{\partial}{\partial q} + \frac{1}{uq}A
\end{gathered}
\]
where $A_i$ is the operator of taking quantum product with $T_i$ and $A$ is the operator of taking quantum product with $c_1(\mathcal{O}(1))$ (it is also required that $T_i=c_1(\mathcal{O}(1))$ for some $i$). 

Associativity of the quantum product $*$ is equivalent to the flatness of the above connection. 
On the other hand, associativity is also equivalent with a certain partial differential equation satisfied by the potential, called 
Witten--Dijkgraaf--Verlinde--Verlinde (WDVV)
equation~\cite{kontsevich1994gromov, crauder1995quantum}.

\subsection{GW invariants of blow-ups}

Let now $r \geq 0$ and $X_{(r)}$ be the blow-up of $X$ in $r$ generic points. 
As above, let $T_0 = 1, T_1,\dots, T_m, T_p$ be a basis of $H^{\ast}(X,\QQ)$, where $T_1,\dots, T_m$ is a basis of $H^2(X)$ and $T_p \in H^4(X)$ is the (Poincar\'e dual of the) class of a point. Let 
\[ b: X_{(r)} \to X \]
be the blow-up map, and denote by $E_1,\dots,E_r$ the (dual) classes of the exceptional divisors. By an abuse of notation, we will write $T_0, T_1,\dots, T_m, T_p \in H^*(X_{(r)},\QQ)$ for the pullbacks under $b^{\ast}$ of the above classes. Let moreover $T_{m+i}$, $1 \leq i \leq r$ be Poincar\'e dual cohomology classes to the exceptional curves. In particular, $T_1,\dots, T_m, T_{m+1},\dots,T_{m+r}$ is a basis of $H^2( X_{(r)},\QQ)$. 
For an $r$-tuple $\alpha=(a_1,\dots,a_r)$ of integers, denote by $(\beta,\alpha)$ the homology class \[\beta-\sum_{i=1}^ra_iE_i\]
where $\beta$ is the pullback of a homology class of $X$. For any $1 \leq i \leq r$, $[i]$ will denote the $r$-tuple $\alpha$ that has 1 at the $i$-th entry and 0 everywhere else.

Let $\overline{\mathcal{M}}_{0,n}(X_{(r)},(\beta,\alpha))$ be the moduli space of stable $n$-pointed genus 0 maps with image class $(\beta,\alpha)$ on $X_{(r)}$ \cite[Theorem 1]{fulton1996notes}. Writing 
\[|\alpha|=\sum_i a_i,\]
the expected dimension of $ \overline{\mathcal{M}}_{0,0}( X_{(r)},(\beta,\alpha))$ is \cite[Theorem 2]{fulton1996notes}
\[ n_{\beta,\alpha}\coloneqq \int_{(\beta,\alpha)} c_1( X_{(r)})-1 =\int_{\beta} c_1(X)-|\alpha|-1.\] 
Let
\[ N_{\beta,\alpha}\coloneqq I_{(\beta,\alpha)}(T_p^{n_{\beta,\alpha}})\]
be the Gromov--Witten invariant for the appropriate point class on $ X_{(r)}$. 
When $\alpha$ is empty, that is, when considering GW invariants of $X$, we will just write $n_{\beta}$ and $N_{\beta}$.
For $\alpha=(a_1,\dots,a_r)$, we will write $(\alpha,0) = (a_1,\dots,a_r,0)$ and $(\alpha,1) = (a_1,\dots,a_r,1)$.

\begin{theorem}[{\cite{gottsche1998quantum, hu2006quantum}}] 
\label{thm:gwburules}
\begin{enumerate}
\item The numbers $N_{\beta,\alpha}$ satisfy the following properties. \label{thm:gwburules_1} 
\begin{enumerate}
    \item $N_{\beta,\alpha}=N_{\beta,(\alpha,0)}$ \label{thm:gwburules_a}
    \item $N_{0,\alpha}=1$ if $\alpha=-[i]$ for some $1 \leq i \leq r$, and 0 for any other $\alpha$ \label{thm:gwburules_b}
    \item $N_{\beta,\alpha}=0$ if $\beta$ is effective and any of the $a_i$ is negative \label{thm:gwburules_c}
    \item If $n_{\beta,\alpha} > 0$, then $N_{\beta,\alpha}=N_{\beta,(\alpha,1)}$. \label{thm:gwburules_d}
\end{enumerate}
\item The numbers $N_{\beta,\alpha}$ can be determined by a recursive algorithm starting from the ones given by (a) and (b) from part~\eqref{thm:gwburules_1}. 
\end{enumerate}
\end{theorem}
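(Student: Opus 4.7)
The plan is to establish properties (a)--(d) of part~\eqref{thm:gwburules_1} by geometric arguments on the blow-up morphism $b\colon X_r \to X$, and then derive the recursive determinability in part~(2) from the WDVV equations satisfied by the quantum potential on $X_r$.

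For (a), a generic stable map in class $(\beta,\alpha)$ on $X_r$ avoids any prescribed additional point $p$; blowing up $p$ to obtain $X_{r+1}$, such maps lift canonically to class $(\beta,\alpha,0)$ on $X_{r+1}$, with matching expected dimensions and virtual classes (the additional factor contributes trivially because the intersection with the new exceptional divisor is zero). For (b), the only curve in class $(0,-[i]) = E_i$ is the exceptional divisor itself, a rigid rational $(-1)$-curve, and $n_{0,-[i]} = 0$ so no insertion is needed; for any other $\alpha$, either $n_{0,\alpha} < 0$ or the class fails to be effective, and the invariant vanishes.

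For (c), the intersection $(\beta,\alpha) \cdot E_i$ equals $a_i$, which is negative when $a_i < 0$, so any stable map in this class has image containing $E_i$ with positive multiplicity. Splitting off these components and recursing reduces the computation to a boundary stratum of the moduli space; a standard obstruction-bundle calculation along the exceptional strata, together with effectivity of $\beta$, then forces the virtual contribution to vanish. Property (d) is the main geometric input: fixing one of the $n_{\beta,\alpha}$ point insertions at a generic point $p \in X_r$ restricts to curves passing through $p$, whose proper transforms on the blow-up $X_{r+1}$ of $X_r$ at $p$ lie in class $(\beta,\alpha,1)$ and satisfy $n_{\beta,(\alpha,1)} = n_{\beta,\alpha} - 1$ remaining generic point conditions. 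This birationally identifies the two moduli spaces with matching virtual classes; the hypothesis $n_{\beta,\alpha} > 0$ is precisely what guarantees a point insertion is available to absorb.

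For part~(2), I would invoke the WDVV equations, equivalent to associativity of the quantum product, on $X_r$. Expanding each WDVV relation in the monomial basis of $q$ and $t$ and reading off coefficients yields a family of quadratic relations among the $N_{\beta,\alpha}$. An induction on curve classes, ordered by total intersection degree and combined with properties (a)--(d) to handle the blow-up directions, then lets one solve for each $N_{\beta,\alpha}$ in terms of invariants with strictly smaller classes, terminating at the base cases from (b) and at the invariants of $X$ itself. The main obstacle is (d), which demands a careful comparison of virtual fundamental classes between $X_r$ and $X_{r+1}$. The cleanest approach is likely via the degeneration formula applied to a one-parameter deformation of $X_r$ to $X_{r+1}$ glued along $E_{r+1}$ to $\CP{2}$ (the projectivised normal cone of $p$), identifying the relative invariants that witness the absorption of the point condition.
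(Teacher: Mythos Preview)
The paper does not prove this theorem at all: it is stated with attribution to \cite{gottsche1998quantum, hu2006quantum} and used as a black box throughout. There is therefore no ``paper's own proof'' to compare against; your sketch is an outline of what the cited references actually do.

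As an outline it is broadly faithful to those sources. G\"ottsche--Pandharipande establish (a)--(d) for rational surfaces by direct geometric arguments (their setting is enumerative, with the moduli spaces generically smooth of the expected dimension, so virtual technicalities are largely absent), and they derive the recursion in part~(2) from the WDVV relations exactly as you describe; the paper records the two specific WDVV consequences~\eqref{eq:Rm} and~\eqref{eq:Rk} in Section~\ref{subsec:wdvv}. Hu extends this to arbitrary surfaces using symplectic gluing, which is close in spirit to the degeneration-formula approach you propose for~(d).

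A couple of points in your sketch are looser than the cited proofs. For~(b), the assertion that ``either $n_{0,\alpha}<0$ or the class fails to be effective'' for $\alpha\neq -[i]$ is not literally true: multiple covers of a single $E_i$ (class $kE_i$, $k\geq 2$) are effective with $n_{0,-k[i]}=k-1\geq 1$, and one must argue separately that the generic point insertions kill these contributions. For~(c), ``a standard obstruction-bundle calculation'' is doing real work; in the symplectic setting Hu handles this via a careful analysis of how bubbled $E_i$-components contribute, and it is not a one-line remark. Your identification of~(d) as the crux, and the degeneration/gluing strategy for it, is accurate.
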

The (genus 0) Gromov--Witten potential simplifies according 
to~\cite[Page 8]{gottsche1998quantum} as  
\[F(q,t)=\sum_{(\beta,\alpha)}N_{\beta,\alpha}q^{\beta} q^{\alpha} \frac{t_p^{n_{\beta,\alpha}}}{n_{\beta,\alpha}!}\]
where 
\[
q^{\beta}=q_1^{\int_\beta T_1}\dots q_m^{\int_\beta T_m}, \quad q^{\alpha}= q_{m+1}^{a_1} \dots q_{m+r}^{a_r}
\]
and the sum is taken over classes $(\beta,\alpha) \neq 0$ satisfying $n_{\beta,\alpha} \geq 0$. 

\begin{lemma}
\label{lem:gij}
\begin{equation*} 
g_{ X_{(r)}}^{ij}=\begin{cases}
1 & \textrm{ if } (i,j) \in \{(0,p),(p,0)\}, \\
g_{X}^{ij} & \textrm{ if } i,j \in \{1,\dots,m\}, \\
-1 & \textrm{ if } (i,j) \in \{(m+1,m+1),\dots,(m+r,m+r)\}, \\
0 & \textrm{ otherwise}
\end{cases} \end{equation*}
\end{lemma}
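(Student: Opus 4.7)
The plan is straightforward: compute the intersection matrix $(g_{ij})$ of $X_r$ entry by entry, observe that it is block diagonal, and invert each block. Since a smooth projective surface has nonzero Poincaré pairing only between classes whose degrees sum to $4$, the basis $T_0,T_1,\dots,T_m,T_{m+1},\dots,T_{m+r},T_p$ produces two decoupled blocks: the $(T_0,T_p)$ hyperbolic block and the degree $2$ block.

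First I would dispatch the $(T_0,T_p)$ block: degree reasons force $g_{00}=g_{pp}=0$, while $g_{0p}=g_{p0}=\int_{X_r}T_p=1$. All pairings between $\{T_0,T_p\}$ and the degree $2$ classes $T_1,\dots,T_{m+r}$ vanish by the same degree argument, so the $(T_0,T_p)$ piece decouples completely from the rest. Next, I would use the projection formula for the blow-up map $b\colon X_r\to X$ on the degree $2$ block. For $1\leq i,j\leq m$ one has $b^{*}T_i\cdot b^{*}T_j=T_i\cdot T_j$ on $X$, recovering the $X$-block $(g^{X}_{ij})$. For the mixed pairings $b^{*}T_i \cdot E_j$, the projection formula yields $T_i\cdot b_{*}E_j=0$, because $E_j$ contracts to a point and hence pushes forward to zero in $H_2(X)$. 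Finally, the standard blow-up relation $E_i\cdot E_j=-\delta_{ij}$ fills in the exceptional block.

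Putting these together, $(g_{ij})$ is block diagonal with three blocks: the $2\times 2$ hyperbolic block, the $m\times m$ block $(g^X_{ij})$, and $-I_r$. Two of these are involutions and the third has inverse $(g_X^{ij})$ by definition; inverting block by block therefore gives the stated formula for $g_{X_r}^{ij}$. No step presents a real obstacle, and the lemma is really a bookkeeping exercise; the only point worth scrutinising is the vanishing $b_{*}E_j=0$ in homology, which underlies the orthogonality between the exceptional divisors and the pullback classes.
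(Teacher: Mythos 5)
Your proof is correct and is essentially the detailed version of what the paper leaves implicit: the paper's proof is the one-liner ``This follows from the fact that $r$ generic points were blown-up,'' and your argument supplies exactly the bookkeeping that this sentence gestures at (degree reasons for the $(T_0,T_p)$ block, the projection formula giving $b^*T_i\cdot b^*T_j = T_i\cdot T_j$ and $b^*T_i\cdot E_j = T_i\cdot b_*E_j = 0$, and the standard $E_i\cdot E_j = -\delta_{ij}$, followed by block-wise inversion). One small remark: your argument does not in fact use genericity of the blown-up points, only their distinctness, so your version is marginally more general than the phrasing in the paper suggests; genericity matters elsewhere (for the Gromov--Witten counts) but not for the intersection form.
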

\begin{proof}
This follows from the fact that $r$ generic points were blown-up.
\end{proof}

With these notations, Definition~\ref{def:quantum_product} simplifies as follows.

\begin{corollary}\label{cor:quantprodrec}
The quantum product on $H^*(X, \C)$ is given by the expression  
\begin{equation*} 
T_i \ast T_j = (T_i \cdot T_j)T_p + \sum_{k,l=1}^mF_{ijk}g^{kl}T_l-\sum_{e=m+1}^{m+r}F_{ije}T_e + F_{ijp}T_0.\end{equation*}
\end{corollary}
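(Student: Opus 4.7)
The strategy is to unwind Definition~\ref{def:quantum_product} by substituting into it the explicit entries of the inverse Poincaré matrix $g^{ef}$ furnished by Lemma~\ref{lem:gij}, and to discard terms involving $\partial/\partial t_0$ by appealing to the explicit form of the potential. The statement is not really a theorem but a bookkeeping identity, so the proof should be a direct computation; the only point requiring care is making sure the three surviving blocks of the sum are correctly separated and that no cross-terms are missed.

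First I would recall that $T_0 = 1$, $T_1,\dots,T_m,T_{m+1},\dots,T_{m+r}$ is a basis of $H^{\leq 2}$ and $T_p \in H^4$, so the Poincaré pairing matrix $g_{ef}$ is block-diagonal with a $2\times 2$ block pairing $T_0$ with $T_p$ (with off-diagonal entry $1$), a block on $H^2$ whose inverse was computed in Lemma~\ref{lem:gij}, and nothing else. Hence the only non-zero entries of $g^{ef}$ are $g^{0p}=g^{p0}=1$, the block $g^{kl}=g_X^{kl}$ for $k,l\in\{1,\dots,m\}$, and $g^{m+i,m+i}=-1$ for $1\leq i\leq r$. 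Plugging this into the double sum
\[
\sum_{e,f}F_{ije}\,g^{ef}\,T_f
\]
from Definition~\ref{def:quantum_product} decomposes it into four pieces indexed by these four classes of $(e,f)$.

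Next I would observe that the genus-zero potential $F(q,t)$ defined just before Theorem~\ref{thm:gwburules} is a power series in $q$ and $t_p$ only — the variable $t_0$ does not appear in it at all (equivalently, the string equation makes the fundamental class act trivially, but here it is built into the definition). Consequently $\partial_0 F \equiv 0$, so $F_{ij0}=0$, which kills the $(e,f)=(0,p)$ term. What survives are exactly three contributions: $F_{ijp}\cdot g^{p0}\cdot T_0 = F_{ijp}T_0$; the middle block $\sum_{k,l=1}^m F_{ijk}g^{kl}T_l$; and the exceptional block $-\sum_{e=m+1}^{m+r}F_{ije}T_e$, where the minus signs come from $g^{m+i,m+i}=-1$.

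Finally, for the classical piece, when $T_i,T_j\in H^2$ the cup product lies in $H^4$ and is therefore the scalar $(T_i\cdot T_j)\in\Z$ (the intersection number) times the point class $T_p$. Assembling the four surviving terms yields exactly the formula in the statement. The only conceivable obstacle is an error in identifying the $t_0$-independence of $F$, but this is immediate from the explicit definition of $F$ given in the preceding subsection.
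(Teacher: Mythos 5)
Your proposal is correct and amounts to the same computation the paper implicitly invites: the paper offers no explicit proof beyond the remark that Definition~\ref{def:quantum_product} ``simplifies'' under Lemma~\ref{lem:gij}, and your argument is precisely that simplification spelled out. You correctly identify the four blocks of the Gram matrix, note that $F_{ij0}=0$ because $F$ is independent of $t_0$, and observe that for $T_i,T_j\in H^2$ the classical cup product is the intersection number times $T_p$.
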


\subsection{Consequences of the WDVV relations}
\label{subsec:wdvv}

When applying induction in our arguments, we will set $r=1$. In this case, for convenience, the class $(\beta,\alpha)$ will be also written as $(\beta,a)$.

Let the symbol $\vdash (\beta,a)$ denote the set of pairs $((\beta_1,a_1), (\beta_2,a_2))$ satisfying
\begin{enumerate}
    \item[(i)] $(\beta_1,a_1), (\beta_2,a_2) \neq 0$
    \item[(ii)] $(\beta_1,a_1)+ (\beta_2,a_2)= (\beta,a)$
    \item[(iii)] $n_{\beta_1,a_1},n_{\beta_2,a_2} \geq 0$ 
\end{enumerate}
Let $\vdash (\beta,a) \neq 0$ denote the subset of $\vdash (\beta,a)$ for which $\beta_1 \neq 0, \, \beta_2 \neq 0$. The following two recursive relations were obtained in \cite[Theorem 3.3]{hu2006quantum} using the WDVV equations. First, if $n_{\beta,a} \geq 3$ and $g_{ij}\neq 0$ for some $i,j \in \{1,\dots,m\}$, then
\begin{equation}
\label{eq:Rm}
\begin{multlined}
N_{\beta,a}= \frac{1}{g_{ij}}\sum_{\vdash (\beta,a) \neq 0} N_{\beta_1,a_1}N_{\beta_2,a_2}\left(\sum_{a,b=1}^mT_a(\beta_1)g^{ab}T_b(\beta_2)-a_1a_2\right) \\
\cdot \left[T_i(\beta_1)T_j(\beta_2)\binom{n_{\beta,a}-3}{n_{\beta_1,a_1}-1}-T_i(\beta_1)T_j(\beta_1)\binom{n_{\beta,a}-3}{n_{\beta_1,a_1}}\right]
\end{multlined}
\end{equation}
Second, if $n_{\beta,a} \geq 0$, then
\begin{equation}
\label{eq:Rk}
\begin{multlined}
T_i(\beta)T_j(\beta)a N_{\beta,a}= (T_i(\beta) T_j(\beta)-g_{ij}(a-1)^2))N_{\beta,a-1} \\
+ \sum_{\vdash (\beta,a-1) \neq 0} N_{\beta_1,a_1}N_{\beta_2,a_2}\left(\sum_{a,b=1}^mT_a(\beta_1)g^{ab}T_b(\beta_2)-a_1a_2\right) \\
\cdot \left(T_j(\beta_1)T_i(\beta_2)a_1a_2-T_i(\beta_1)T_j(\beta_1)a_2^2\right)\binom{n_{\beta,a-1}-1}{n_{\beta_1,a_1}}
\end{multlined}
\end{equation}

\section{The operator $K$}
\label{sec:opK}

\subsection{Structure of $K$}
\label{subsec:Kstr}

The class $T_0$ is the identity for the quantum product. Hence, the term $2t_0 T_0$ only contributes the diagonal matrix $2t_0I$ to $K$, which corresponds to a shift of the spectrum. We will therefore assume $t_0=0$ throughout the paper.

We will carry out an asymptotic analysis of the spectrum of the Dubrovin connection as $q\to 0$ and $t_p$ is fixed such that $|t_p| \ll \infty$.
For this we will need the leading (lowest order) terms in the connection matrix with respect to the variables $q$. We will first give a recursive formula for these terms. In this section we will assume that $r=1$, and investigate the Gromov-Witten invariants of $X_1$ when compared to those of $X$. For convenience, in this section we will denote $q_{m+1}$, the variable corresponding to the exceptional divisor $E$, by $q_e$.

We will now compare the operator $K$ with the operator $\overline{K}$ of $X$. Recall as well that for an integer $a$, $(\beta,a)$ denotes the homology class \[\beta - aE\] where $\beta$ is the pullback of a homology class of $X$. 

Blowing up a smooth variety in a smooth subvariety of codimension $n$ adds $-(n-1)E$ to its first Chern 
 class~\cite[Section~4.6]{griffiths1978principles}. As we consider surfaces blown up in a point, 
\begin{equation}
\label{eq:c1bu}
c_1(X_1)= c_1(X)-E.
\end{equation}
Hence, we have that
\[ c_1(X_1) \cdot (\beta, a) = c_1(X)\cdot\beta -a.\]

When considering $F_{ijk}$ for a triple of indices $i,j,k$ 
let
\[ \epsilon=\epsilon(i,j,k)\coloneqq \delta_{ip}+\delta_{jp}+\delta_{kp} \in \{0,1,2,3\}.\]
Let 
\[ T'_{i}(\beta,a) \coloneqq \begin{cases}
T_i(\beta) & \textrm{if }  i \in \{1,\dots,m\} \\
a & \textrm{if }  i =e \\
1 & \textrm{if }  i = p \\
0 & \textrm{if }  i = 0
\end{cases} \]
\begin{lemma}\label{lem:potderivrec} 
\[
F_{ijk} =  q_e^{-1}\delta_{ie}\delta_{je}\delta_{ke}+\overline{F}_{ijk} +  \sum_{\substack{(\beta,a)\\ a > 0}}N_{\beta,a}T'_i(\beta,a)T'_j(\beta,a)T'_k(\beta,a)
q^{\beta}q_e^{a} \frac{t_p^{n_{\beta,a}-\epsilon}}{(n_{\beta,a}-\epsilon)!}
\]
where $\overline{F}$ is the potential of $X$. 
\end{lemma}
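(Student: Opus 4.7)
The plan is to substitute the simplified expression for the Gromov--Witten potential of $X_1$ given just before Lemma~\ref{lem:gij}, split the sum over $(\beta,a)$ into three disjoint groups according to the sign of $a$, and apply the product of derivations $\partial_i\partial_j\partial_k$ term by term. Since all three groups will be stable under differentiation (each term carries its own $q_e^a$-factor, which tracks $a$), the matching with the three summands in the statement of the lemma is essentially bookkeeping.

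More concretely, I would write
\[
F(q,t)=\sum_{(\beta,a)} N_{\beta,a}\, q^{\beta}\, q_e^{a}\, \frac{t_p^{n_{\beta,a}}}{n_{\beta,a}!}
\]
and partition the indexing set. First, for $\beta=0$, Theorem~\ref{thm:gwburules}(b) forces $a=-1$, with $N_{0,-1}=1$ and $n_{0,-1}=0$; the single surviving contribution is the monomial $q_e^{-1}$. Second, for $\beta\neq 0$ and $a=0$, Theorem~\ref{thm:gwburules}(a) gives $N_{\beta,0}=N_\beta$ and $n_{\beta,0}=n_\beta$, and summing over such classes reproduces the potential $\overline{F}$ of $X$. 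Third, Theorem~\ref{thm:gwburules}(c) kills every term with $\beta$ effective and $a<0$ except the one already captured in the first group, so everything that remains lies in the range $a>0$.

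Next I would differentiate. Using that $\partial_i$ acts on $q^{\beta}q_e^{a}$ by multiplication by $T_i(\beta)$ for $1\le i\le m$ and by $a$ for $i=e$, that $\partial_p$ lowers the $t_p$-exponent (contributing the combinatorial factor recorded in the definition of $\epsilon$), and that $\partial_0$ annihilates $F$ (since $F$ does not depend on $t_0$), one obtains precisely the factor $T'_i(\beta,a)T'_j(\beta,a)T'_k(\beta,a)$ together with the shifted $t_p$-power $t_p^{n_{\beta,a}-\epsilon}/(n_{\beta,a}-\epsilon)!$ on each term in the third group. The second group differentiates into $\overline{F}_{ijk}$ by definition, and the first group contributes a constant multiple of $q_e^{-1}\delta_{ie}\delta_{je}\delta_{ke}$, since $q_e^{-1}$ depends only on $q_e$ and $\partial_e$ is the Euler derivation in that variable.

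There is no conceptual obstacle here; the only delicate points are the combinatorial ones. The first is confirming exhaustively, via parts (a)--(c) of Theorem~\ref{thm:gwburules}, that the three groups above cover every non-vanishing contribution (so that no stray term with $a<-1$ or $\beta=0$, $a\ne -1$ is overlooked). The second is tracking the shift of the $t_p$ exponent through $\epsilon$ differentiations so that the factor $t_p^{n_{\beta,a}-\epsilon}/(n_{\beta,a}-\epsilon)!$ emerges with the correct denominator. Finally, one should verify the scalar in front of $q_e^{-1}\delta_{ie}\delta_{je}\delta_{ke}$ by direct computation of $\partial_e^3 q_e^{-1}$, taking into account the sign of $T'_e(0,-1)^3$ so that the exceptional singular term is consistent with its description as the derivative of a term already included in the full sum $\sum_{(\beta,a)} N_{\beta,a}T'_i\,T'_j\,T'_k\,q^\beta q_e^a\,\cdots$ upon extending it to $a=-1$.
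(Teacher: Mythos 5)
Your approach is correct and is essentially the same as the paper's, which offers no details beyond ``follows from the definition of the potential and the rules of derivation'': you substitute the GW potential of $X_1$, partition the index set $(\beta,a)$ using parts (a)--(c) of Theorem~\ref{thm:gwburules} into the three groups $\{\beta=0,\,a=-1\}$, $\{\beta\neq 0,\,a=0\}$, $\{\beta\neq 0,\,a>0\}$, and differentiate term by term, keeping track of the $t_p$-shift via $\epsilon$. All of that bookkeeping checks out.

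However, you flag but do not carry out the final scalar check, and it is worth doing: since $\partial_e=q_e\,\partial/\partial q_e$, one has $\partial_e q_e^a = a\,q_e^a$, hence $\partial_e^3 q_e^{-1} = (-1)^3 q_e^{-1} = -q_e^{-1}$, which is exactly $T'_e(0,-1)^3\,q_e^{-1}$ as you anticipate. The first term should therefore read $-q_e^{-1}\delta_{ie}\delta_{je}\delta_{ke}$, not $+q_e^{-1}\delta_{ie}\delta_{je}\delta_{ke}$; the statement as printed appears to carry a sign typo. This is the sign that actually propagates correctly into Proposition~\ref{prop:Kij} (which has $-q_e^{-1}\delta_{ie}\delta_{je}$, and uses $g^{ee}=-1$ and the $-F_{ejk}$ in the expansion of $K_{ij}$), and into the $-q_j^{-1}$ diagonal entries appearing in Proposition~\ref{prop:ggeq0} and Lemma~\ref{lem:p2min}. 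When you write up the proof, conclude the last step rather than leave it as a remark; as it stands your argument, followed faithfully, does not reproduce the displayed formula but a corrected version of it.
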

\begin{proof}
This follows from the definition of the potential and the rules of derivation.
\end{proof}

\begin{proposition} 
\label{prop:Kij}
For $i \neq p$ and $j\neq 0$, 
\[
\begin{aligned}
K_{ij} = & \overline{K}_{ij} -q_e^{-1}\delta_{ie}\delta_{je} \\
& +\sum_{\substack{\beta,a\\a >0}}N_{\beta,a} 
\left(\sum_{k \in \{1,\dots,m, e, p\}} g^{ki}T'_k(\beta)\right)  T'_j(\beta)(1-n_{\beta,a}+2\varepsilon)q^{\beta}q_e^{a}\frac{t_p^{n_{\beta,a}-\varepsilon}}{(n_{\beta,a}-\varepsilon)!}
\end{aligned}
\]

where $\overline{K}_{ij}$ is understood to be 0 if either $i$ or $j$ is equal to $e$, and \[\varepsilon=\varepsilon(i,j) \coloneqq \delta_{i0}+\delta_{jp} \in \{0,1,2\}\]
\end{proposition}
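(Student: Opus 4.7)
The plan is a direct calculation combining Corollary~\ref{cor:quantprodrec} and Lemma~\ref{lem:potderivrec}. With the standing assumption $t_0 = 0$, the Euler vector field on $X_1$ expands via~\eqref{eq:c1bu} as
\[ V = c_1(T_{X_1}) - 2 t_p T_p = \sum_{\ell=1}^m c_\ell T_\ell - T_e - 2 t_p T_p,\]
where $c_\ell$ denote the components of $c_1(T_X)$ in the basis $T_1, \ldots, T_m$. Writing $V = \sum_\mu V_\mu T_\mu$, the matrix element $K_{ij}$ is the $T_i$-coefficient of the quantum product $V * T_j$. Applying Corollary~\ref{cor:quantprodrec} to $T_\mu * T_j$ and summing in $\mu$ with weights $V_\mu$, this coefficient simplifies to $\sum_k g^{ki} F_{V,j,k}$, where $F_{V,j,k} := \sum_\mu V_\mu F_{\mu,j,k}$. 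For $i \neq p$, Lemma~\ref{lem:gij} reduces the sum over $k$ to $k \in \{1,\ldots,m\}$ if $i \in \{1,\ldots,m\}$, to $k = e$ if $i = e$, and to $k = p$ if $i = 0$.

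Then I would substitute Lemma~\ref{lem:potderivrec} for each $F_{\mu,j,k}$, which splits $K_{ij}$ into three pieces. The $\overline{F}_{\mu,j,k}$ piece reproduces the analogous construction on $X$ and yields $\overline{K}_{ij}$, where the stated convention $\overline{K}_{ij} = 0$ for $i$ or $j$ equal to $e$ reflects that $\overline{F}$ does not involve $q_e$ and the Euler vector field of $X$ has no $T_e$-component. The $q_e^{-1}\delta_{\mu e}\delta_{je}\delta_{ke}$ piece is nonvanishing only at $\mu = j = k = e$; combining with $V_e = -1$ and with $g^{ee} = -1$ produces the correction $-q_e^{-1}\delta_{ie}\delta_{je}$ in the statement.

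The heart of the argument is the sum over $(\beta, a)$ with $a > 0$. Summing $V_\mu T'_\mu(\beta, a)$ over $\mu \in \{1,\ldots,m,e\}$ gives $c_1(T_X)\cdot\beta - a$. The contribution at $\mu = p$ carries the scalar $-2t_p$; applying the identity $t_p \cdot t_p^{N-1}/(N-1)! = N \, t_p^N/N!$ to absorb this extra $t_p$ into the denominator factorial, it contributes an additional $-2(n_{\beta,a} - \delta_{jp} - \delta_{kp})$ to the coefficient. Using $c_1(T_X)\cdot\beta = n_{\beta,a} + a + 1$ from the definition of $n_{\beta,a}$, these three pieces collapse to the factor $1 - n_{\beta,a} + 2\delta_{jp} + 2\delta_{kp}$. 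Finally, because $g^{ki}$ is nonzero at $k = p$ only when $i = 0$, the occurrence of $\delta_{kp}$ inside the $k$-sum is equivalent to $\delta_{i0}$ both in the coefficient and in the $t_p$-exponent, yielding $\varepsilon = \delta_{i0} + \delta_{jp}$ as required.

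I expect the main technical obstacle to be careful sign and index bookkeeping: tracking the interplay of $V_e = -1$, $g^{ee} = -1$, and the minus sign on $-F_{ije}T_e$ in Corollary~\ref{cor:quantprodrec}, together with verifying the uniform rewriting of $\delta_{kp}$ as $\delta_{i0}$ via Lemma~\ref{lem:gij}, and confirming that the $\mu = p$ contribution interacts correctly with the $t_p$-exponent shift so that the final factor lines up as claimed.
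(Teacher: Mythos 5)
Your proof follows essentially the same route as the paper's: expand the Euler vector field of $X_1$ via \eqref{eq:c1bu}, write $K_{ij}=\sum_k g^{ki}F_{V,j,k}$ using Corollary~\ref{cor:quantprodrec} and Lemma~\ref{lem:gij}, substitute Lemma~\ref{lem:potderivrec}, reduce the $\mu$-sum to $1-n_{\beta,a}+2\delta_{jp}+2\delta_{kp}$ after absorbing $-2t_p$ into the factorial, and finally replace $\delta_{kp}$ by $\delta_{i0}$ because $g^{ki}$ localises $k$. This is precisely the paper's computation.

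One small caution on the $q_e^{-1}$ bookkeeping: the two signs you cite, $V_e=-1$ and $g^{ee}=-1$, multiply to $+1$, so together with the coefficient $+q_e^{-1}\delta_{ie}\delta_{je}\delta_{ke}$ as displayed in Lemma~\ref{lem:potderivrec} they would yield $+q_e^{-1}\delta_{ie}\delta_{je}$, not the stated $-q_e^{-1}\delta_{ie}\delta_{je}$. The reconciliation is that the coefficient in Lemma~\ref{lem:potderivrec} should in fact carry a minus sign, since $T_e'(0,-1)^3=(-1)^3=-1$, equivalently $\partial_e q_e^{-1}=-q_e^{-1}$; once this is corrected, the three minus signs combine to the claimed $-q_e^{-1}\delta_{ie}\delta_{je}$, consistent with the $-q_2^{-1}$ diagonal entry in Example~\ref{ex:p2ex1}. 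With that adjustment your argument is sound and matches the paper's proof.
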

\begin{proof}
Rewrite \eqref{eq:c1bu} as
\[c_1(X_1)=c_1(X)-E=\sum_{l=1}^mT^{\vee}_l(c_1(X))T_l-E.\]
This gives
\[ K_{ij}=\sum_{k \in \{1,\dots,m, e, p\}} 
g^{ki}\left( \sum_{l=1}^mT^{\vee}_l(c_1(X))F_{ljk}-F_{ejk}-2t_pF_{pjk}\right)\]
Applying Lemma~\ref{lem:potderivrec} then yields 
\[\begin{gathered}
K_{ij} =\overline{K}_{ij} -q_e^{-1}\delta_{ie}\delta_{je} \\
+\sum_{\substack{\beta,a\\a >0}}N_{\beta,a}T'_j(\beta,a) q^{\beta}q_e^{a} \frac{t_p^{n_{\beta,a}-\varepsilon}}{(n_{\beta,a}-\varepsilon)!}\\ \cdot \sum_{k \in \{1,\dots,m, e, p\}
}g^{ki}\left(\sum_{l=1}^m T^{\vee}_l(c_1(X))T_l(\beta)T'_k(\beta,a)-T_e'(\beta,a)T_k'(\beta,a)-2(n_{\beta,a}-\varepsilon)T_k'(\beta,a)\right)
\end{gathered}\]
As
\[\sum_{l=1}^m T^{\vee}_l(c_1(X))T_l(\beta) -T_e'(\beta,a) = \int_{\beta}c_1(X)-a=n_{\beta,a}+1,\]
we get the statement. Note that the constant $\epsilon(i,j,k)$ of Lemma~\ref{lem:potderivrec} in the above proof was replaced by $\varepsilon(i,j)$ due to Lemma~\ref{lem:gij}.
\end{proof}

\begin{lemma}\label{lem:Ki0Kpi} For $i \in \{0,1,\dots,m,e,p\}$,
\[K_{i0} = T_i^{\vee}(c_1(X_1)),\quad K_{pi} = T_i \cdot c_1(X_1)\quad\textrm{ and }\quad K_{p0} = -2 t_p.\]
In particular,
\[ K_{i0}=\begin{cases}
\overline{K}_{i0}, & \textrm{ if } i \neq e \\
-1,& \textrm{ if } i = e
\end{cases}
\quad \textrm{and}\quad 
K_{pi}=\begin{cases}
\overline{K}_{pi}, & \textrm{ if } i \neq e \\
1,& \textrm{ if } i = e.
\end{cases}
\]

\end{lemma}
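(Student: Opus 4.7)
The plan is to exploit the unital property of the class $T_0$ under the quantum product together with the explicit form of the Euler vector field. With the normalization $t_0 = 0$ adopted in Section~\ref{subsec:Kstr}, the Euler field reduces to $E = c_1(X_1) - 2 t_p T_p$, so computing $K$ amounts to computing the matrix of quantum multiplication by this class, expressed in the basis $\{T_0, T_1, \ldots, T_m, T_e, T_p\}$ of $H^*(X_1,\C)$.

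First I would prove that $T_0$ is a strict unit for $*$. Since the potential $F$ contains no explicit dependence on the variable $t_0$, we have $\partial_{t_0} F \equiv 0$ and hence $F_{i 0 k} = 0$ for all $i,k$. Feeding this into Corollary~\ref{cor:quantprodrec} collapses $T_i * T_0$ to the ordinary cup product $T_i \cdot T_0 = T_i$. Therefore $K(T_0) = E * T_0 = E$, and reading off the coefficients of the class $c_1(X_1) - 2t_p T_p$ in the chosen basis immediately delivers $K_{i 0} = T_i^{\vee}(c_1(X_1))$ for $i \in \{0,1,\ldots,m,e\}$ as well as the separate value of $K_{p 0}$ coming from the $T_p$-term of the Euler field.

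Second, to obtain the formula for $K_{p i}$ when $i \neq 0$, I would extract the $T_p$-coefficient of $E * T_i = c_1(X_1) * T_i - 2 t_p (T_p * T_i)$. The decisive simplification is that every quantum correction appearing in Corollary~\ref{cor:quantprodrec} lies in $H^0(X_1) \oplus H^2(X_1)$, so the $T_p$-component of $T_a * T_b$ coincides with the \emph{classical} intersection number $T_a \cdot T_b$. This immediately kills the contribution of $T_p * T_i$, while expanding $c_1(X_1)$ in the $H^2$-basis as $c_1(X_1) = \sum_{k} T_k^{\vee}(c_1(X_1))\, T_k$ yields
\[
\sum_{k} T_k^{\vee}(c_1(X_1))\, g_{k i} = c_1(X_1) \cdot T_i,
\]
establishing the claimed formula.

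Third, the comparison with $\overline{K}$ follows from the blow-up relation $c_1(X_1) = c_1(X) - E$ together with the orthogonality $E \cdot b^{*}H^2(X) = 0$ and the self-intersection identity $E^2 = -1$ (Lemma~\ref{lem:gij}). For $i \in \{1,\ldots,m\}$ the dual class $T_i^{\vee}$ lies in the pullback subspace and is therefore $E$-orthogonal, so $T_i^{\vee}(c_1(X_1)) = T_i^{\vee}(c_1(X)) = \overline{K}_{i 0}$; for $i = e$ one uses $g_{e e} = -1$ to deduce $T_e^{\vee} = -E$, whence $T_e^{\vee}(c_1(X_1)) = -E \cdot (c_1(X) - E) = -1$. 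An entirely analogous case distinction gives the second half of the corollary. The main bookkeeping obstacle I anticipate is tracking the signs introduced by $g_{e e} = -1$ in the passage between $T_e$ and $T_e^{\vee}$; once these are handled, the lemma reduces to elementary linear algebra on $H^{*}(X_1)$ combined with the vanishing $F_{i 0 k} = 0$.
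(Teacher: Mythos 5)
Your proof is correct and takes essentially the same route as the paper, whose one-line argument simply cites unitality of $T_0$ under $\ast$ together with Corollary~\ref{cor:quantprodrec}; you spell out exactly those two ingredients, plus the blow-up relation $c_1(X_1)=c_1(X)-E$ and the block-diagonal Gram matrix of Lemma~\ref{lem:gij} for the final identification with $\overline{K}$. One small remark: if you actually chase the sign in the $T_p$-term of the Euler field~\eqref{eq:euler} your computation gives $K_{p0}=-2t_p$, so the $+2t_p$ in the lemma (and in the examples) reflects a sign-convention discrepancy internal to the paper rather than a defect in your reasoning.
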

\begin{proof}
This follows from the fact that $T_0$ is the identity for the quantum product \cite[Lemma~3.1]{crauder1995quantum} and from Corollary~\ref{cor:quantprodrec}.
\end{proof}

We will say that $K_{i0}$, $K_{pi}$, $i \in \{0,1,\dots,m,e,p\}$ are the trivial entries of $K$.

Let us fix some $t_p \neq 0$. 
\begin{defn}
The \emph{degree} or, interchangeably, the \emph{order} of a summand in $K_{ij}$ is the sum of the exponents of the $q$-variables.
We will say that $(\beta,a)$ \emph{appears} or, interchangeably, \emph{occurs} in $K_{ij}$ (or just shortly, at $ij$)
if $q^{\beta}q_e^a$ occurs in $K_{ij}$ as a summand with nonzero coefficient. 
If moreover the order of $q^{\beta}q_e^a$ is minimal among the summands of $K_{ij}$, 
we say that $(\beta,a)$ is \emph{minimal} at $ij$. We denote by
\[\min \deg K_{ij}\] the minimal degree of the summands of $K_{ij}$.
\end{defn} 

With an argument similar to that of Proposition~\ref{prop:Kij}, one can also show the following.
\begin{lemma} For $i \neq p$ and $j \neq 0$
\[
\overline{K}_{ij} =
\sum_{\beta}N_{\beta}  
\left(\sum_{k \in \{1,\dots,m,p\}} g^{ki}T'_k(\beta)\right)\left(1-n_{\beta}+2\varepsilon\right) T'_j(\beta)q^{\beta}\frac{t^{n_{\beta}-\varepsilon}}{(n_{\beta}-\varepsilon)!}
\]
\end{lemma}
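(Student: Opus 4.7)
The plan is to mimic the proof of Proposition~\ref{prop:Kij}, applied directly to the surface $X$ rather than to its blow-up $X_1$. Concretely, I would first establish an analogue of Lemma~\ref{lem:potderivrec} for the potential $\overline{F}$ of $X$: since $X$ has no exceptional divisor, the variable $q_e$ and the singular term $q_e^{-1}\delta_{ie}\delta_{je}\delta_{ke}$ simply disappear, and one obtains
\[
\overline{F}_{ijk} = \sum_{\beta}N_{\beta}\,T'_i(\beta)T'_j(\beta)T'_k(\beta)\,q^{\beta}\,\frac{t_p^{n_{\beta}-\epsilon}}{(n_{\beta}-\epsilon)!}
\]
directly from the definition of the Gromov--Witten potential and the conventions for $\partial_i$ (treating $\partial_p = \partial/\partial t_p$ and $\partial_i = q_i\partial/\partial q_i$ for the K\"ahler variables), where $\epsilon = \epsilon(i,j,k)$ is the number of indices among $i,j,k$ equal to $p$.

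Next I would assemble $\overline{K}_{ij}$. By Definition~\ref{ref:Dubrovin_connection} and Corollary~\ref{cor:quantprodrec} (applied on $X$ itself), $\overline{K}_{ij}$ is the coefficient of $T_i^{\vee}$ in the quantum product of the Euler vector field with $T_j$. Since we have set $t_0 = 0$, the Euler field on $X$ reads $c_1(T_X) - 2t_p T_p$, so
\[
\overline{K}_{ij} = \sum_{k \in \{1,\dots,m,p\}} g^{ki}\Bigl( \sum_{l=1}^{m} T^{\vee}_l(c_1(X))\,\overline{F}_{ljk} - 2t_p \overline{F}_{pjk}\Bigr).
\]
Plugging in the formula for $\overline{F}_{ljk}$ and $\overline{F}_{pjk}$ from the preceding step collapses the inner sum: the first term produces $\sum_l T^{\vee}_l(c_1(X)) T_l(\beta) = \int_{\beta} c_1(X) = n_{\beta}+1$, while the factor $-2t_p$ combined with the exponent shift coming from the extra $T_p$-derivative yields $-2(n_\beta - \varepsilon)$ after absorbing one power of $t_p$ into the factorial. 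Adding $1-n_\beta = (n_\beta+1) - 2n_\beta$ to the Euler-field contribution and combining with $2\varepsilon$ from the $t_p$-term gives exactly the combinatorial factor $1-n_\beta + 2\varepsilon$.

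The main obstacle, as in Proposition~\ref{prop:Kij}, is purely bookkeeping: one must carefully track how $\epsilon(l,j,k)$ and $\epsilon(p,j,k)$ from Lemma~\ref{lem:potderivrec} interact with the constraints $i \neq p$ and $j \neq 0$ and reduce to the single discrepancy $\varepsilon(i,j) = \delta_{i0} + \delta_{jp}$ appearing in the statement. No new geometric input is required; the derivation is formally identical to the blow-up case, with all $q_e$-dependent terms and the Lemma~\ref{lem:gij} contribution $-q_e^{-1}\delta_{ie}\delta_{je}$ simply omitted. This is why the lemma can be stated as a parallel to Proposition~\ref{prop:Kij} and used in tandem with it when comparing $K$ and $\overline{K}$.
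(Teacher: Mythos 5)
Your proposal is correct and follows exactly the route the paper intends: the paper simply states the lemma with the remark ``With an argument similar to that of Proposition~\ref{prop:Kij}, one can also show the following,'' and your write-up carries out precisely that argument, dropping the exceptional variable $q_e$ and the $q_e^{-1}$-term from Lemma~\ref{lem:potderivrec}, applying Corollary~\ref{cor:quantprodrec} on $X$ itself, and using the $g^{ki}$-support (as in Lemma~\ref{lem:gij}) to replace $\delta_{kp}$ by $\delta_{i0}$ so that $\epsilon(l,j,k)$ collapses to $\varepsilon(i,j)=\delta_{i0}+\delta_{jp}$. The only nitpick is the slightly garbled phrasing ``Adding $1-n_\beta=(n_\beta+1)-2n_\beta$ to the Euler-field contribution''; the intended computation is $(n_\beta+1)-2(n_\beta-\varepsilon)=1-n_\beta+2\varepsilon$, and the rest of the bookkeeping is right.
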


\subsection{Symmetry of the exceptional curves}
\label{subsec:symm}
 

We now return to the case of general $r$ but we suppose from here on that we start from surface $X_{\min}$, which is a minimal model in the sense that it does not contain any $(-1)$-curve. Its blow-up in $r$ generic points is denoted by $X_r$.

As above, $X_r$ is obtained from $X_{\min}$ by blowing-up $r$ (generic) points via the map
\[ b: X_r \to X_{\min}, \]
the classes $T_0, T_1,\dots, T_m, T_{m+1},\dots,T_{m+r}, T_p$ form a basis of $H^\ast(X_r)$, and we consider the invariants 
\[ N_{\beta,\alpha}= I_{\beta,\alpha}(T_p^{n_{\beta,\alpha}})\]
for homology classes $(\beta,\alpha)$ with $\beta \in H^{\ast}(X_{\min})$ and $\alpha=(a_1,\dots,a_r)$.

For any permutation $\sigma \in S_r$ of the exceptional divisors one always has
\[ N_{\beta,\alpha}=N_{\beta,\sigma(\alpha)} \]
 by \cite[Theorem 3.3]{hu2006quantum}.
This implies the following.
\begin{lemma} 
\label{lem:excperm}
Let $i, j\in \{1,\dots,m\}$.
If 
\[ q^{\beta} q_{m+1}^{a_1}\dots q_{m+r}^{a_r} \frac{t_p^{n_{\beta,\alpha}}}{(n_{\beta,\alpha})!} \] is a summand of $K_{ij}$, then the terms
\[ q^{\beta} \left(\sum_{\sigma \in S_r} q_{m+1}^{\sigma(a_1)}\dots q_{m+r}^{\sigma(a_r)}\right) \frac{t_p^{n_{\beta,\alpha}}}{(n_{\beta,\alpha})!} \]
are all summands of $K_{ij}$, and they all have the same coefficient.
\end{lemma}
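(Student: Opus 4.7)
The plan is to extend Proposition~\ref{prop:Kij} from $r=1$ to general $r$ and then combine it with the permutation symmetry $N_{\beta,\alpha} = N_{\beta,\sigma(\alpha)}$ quoted just above the lemma from \cite[Theorem~3.3]{hu2006quantum}. The extension is essentially automatic: using $c_1(X_r) = c_1(X) - \sum_{l=1}^r E_l$ together with the obvious $r$-variable analogue of Lemma~\ref{lem:potderivrec} (proved by the same differentiation of the potential), one obtains, for $i \neq p$ and $j \neq 0$,
\[
K_{ij} = \overline{K}_{ij} - \sum_{l=1}^r q_{m+l}^{-1}\delta_{i,m+l}\delta_{j,m+l} + \sum_{\substack{(\beta,\alpha)\\ |\alpha|>0}} N_{\beta,\alpha}\, \Phi_{ij}(\beta,\alpha)\, q^{\beta}\, q_{m+1}^{a_1}\cdots q_{m+r}^{a_r}\, \frac{t_p^{n_{\beta,\alpha}-\varepsilon}}{(n_{\beta,\alpha}-\varepsilon)!},
\]
where $\Phi_{ij}(\beta,\alpha) = \bigl(\sum_{k} g^{ki}\,T'_k(\beta,\alpha)\bigr)\, T'_j(\beta,\alpha)\,(1-n_{\beta,\alpha}+2\varepsilon)$, with $\varepsilon=\varepsilon(i,j)$ and $T'_{m+l}(\beta,\alpha) \coloneqq a_l$ in the natural extension to several exceptional divisors.

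Next I specialize to $i,j \in \{1,\dots,m\}$. Three observations then make the $S_r$-symmetry transparent. First, by Lemma~\ref{lem:gij} and the hypothesis $i \in \{1,\dots,m\}$, the factor $g^{ki}$ vanishes unless $k \in \{1,\dots,m\}$, so $\sum_{k} g^{ki}\,T'_k(\beta,\alpha) = \sum_{k=1}^{m} g^{ki}\,T_k(\beta)$ depends only on $\beta$. Second, $T'_j(\beta,\alpha) = T_j(\beta)$ likewise depends only on $\beta$, and $\varepsilon(i,j) = 0$. Third, $n_{\beta,\alpha} = \int_\beta c_1(X) - |\alpha| - 1$ depends on $\alpha$ only through $|\alpha|$, which is $S_r$-invariant. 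Hence the coefficient of the monomial $q^{\beta}\, q_{m+1}^{a_1}\cdots q_{m+r}^{a_r}\, t_p^{n_{\beta,\alpha}}/n_{\beta,\alpha}!$ in $K_{ij}$ factors as $N_{\beta,\alpha}$ times a quantity depending only on $(\beta,|\alpha|)$.

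Applying the cited symmetry $N_{\beta,\alpha}=N_{\beta,\sigma(\alpha)}$ for each $\sigma \in S_r$ shows that every monomial in the $S_r$-orbit of $q^\beta q_{m+1}^{a_1}\cdots q_{m+r}^{a_r}$ appears in $K_{ij}$ with one and the same coefficient, which is exactly the claim of the lemma. I do not anticipate any real obstacle here; the only point to double-check is that the $r$-variable generalization of Proposition~\ref{prop:Kij} indeed runs verbatim, but this is a routine extension of the $r=1$ computation already presented, since the passage from a single exceptional variable $q_e$ to several $q_{m+1},\dots,q_{m+r}$ amounts only to bookkeeping of the index $l$.
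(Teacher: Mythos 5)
Your proposal is correct and follows exactly the route the paper intends: the lemma is stated with no proof because it is asserted to be an immediate consequence of the permutation symmetry $N_{\beta,\alpha}=N_{\beta,\sigma(\alpha)}$ quoted just above it, and your argument makes this precise by observing that, for $i,j \in \{1,\dots,m\}$, the remaining factor $\bigl(\sum_{k} g^{ki}\,T'_k(\beta,\alpha)\bigr)\, T'_j(\beta,\alpha)\,(1-n_{\beta,\alpha}+2\varepsilon)$ collapses (via Lemma~\ref{lem:gij}, $T'_j=T_j$, $\varepsilon=0$, and $n_{\beta,\alpha}=\int_\beta c_1(X)-|\alpha|-1$) into a quantity depending on $\alpha$ only through the $S_r$-invariant $|\alpha|$. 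The paper leaves these verifications implicit; you have correctly identified exactly the three points (the restriction on $g^{ki}$, the $T'_j$ simplification, and the $|\alpha|$-dependence of $n_{\beta,\alpha}$) that make the hypothesis $i,j\in\{1,\dots,m\}$ essential.
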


\section{The Newton polygon of the blow-up}
\label{sec:newtonpoly}

\subsection{The Newton polygon}
\label{subsec:newton_poly}

For a smooth surface $X$ denote by $\chi_{X}(\lambda)$ the characteristic polynomial in the indeterminate $\lambda$ of the operator $K$ associated with $X$.
It has coefficients in $\mathbb{C}\llbracket q_1,\ldots ,q_{m+r}, t_p \rrbracket [q_1^{-1},\dots,q_{m+r}^{-1}]$ (as before, we fixed $t_0=0$). 
From now on we assume that $q \neq 0$ and 
\[
\begin{aligned}
    q_1 & =  \nu_1 q \\ 
    & \vdots  \\
    q_{m+r} &=  \nu_{m+r} q \\
\end{aligned}
\]
for some $\nu_1, \ldots ,\nu_{m+r}\in \mathbb{C} \setminus \{ 0\}$. We will treat $t_p$ as a constant.
In this way, the coefficients of $\chi_{X}(\lambda)$ may be considered as elements of  $\mathbb{C}\llbracket q \rrbracket[q^{-1}]$. 
Notice that as we may rescale $q$, the $(m+r)$-tuple $\nu_1, \ldots ,\nu_{m+r}$
is only defined up to scale. That is, we actually consider 
\[  [\nu_1 \colon \cdots \colon \nu_{m+r}] \in  \CP{m+r-1}  \]
\begin{defn}
The \emph{Newton pairs} (sometimes, Puiseux pairs) of $\chi_{X}(\lambda)$ are the
lattice points  $(x,y)\in\ZZ^2$ such that the coefficient of the monomial 
$\lambda^x q^y$ 
is non-zero. 
The \emph{Newton polygon} of the characteristic polynomial $\chi_{X}(\lambda)$ 
is the lower convex hull of the set of Newton pairs in the $(x,y)$-plane, i. e. 
the smallest convex set containing the rays parallel to the positive $y$-axis 
emanating from the Newton pairs. 
\end{defn}

The lower boundary of the Newton polygon is a broken straight line that we will often 
identify with the polygon itself. We will freely use the terminology of plane 
co-ordinate geometry for broken line segments such as slope, salient 
vertex, etc. 

\begin{theorem}
\label{mainthm1}
Let  $X_{\min}$ be a minimal model and $X=X_r$ is its $r$-fold blow-up in 
generic points. 
Then, for all $ [\nu_1 \colon \cdots \colon \nu_{m+r+1}] \in \CP{m+r}$ 
the Newton polygon of $\chi_{X_r}(\lambda)$ is obtained 
by translating the Newton polygon of $\chi_{X_{\min}}(\lambda)$ by the vector $(0,-r)$, 
and extending it by a segment of slope $1$ and length $r$ on its right. 
\end{theorem}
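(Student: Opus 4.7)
My plan is to proceed by induction on $r$, the number of blow-ups. The base case $r=0$ is tautological.

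For the inductive step, I would view $X_r$ as the blow-up of $X_{r-1}$ at one further generic point; write $e = m+r$ for the new exceptional index and $q_e = q_{m+r}$ for the new variable. Proposition~\ref{prop:Kij}, applied to this one-point blow-up, decomposes the operator $K = K^{X_r}$ as follows: $K_{ee} = -q_e^{-1}$ plus terms of strictly positive $q_e$-degree; the cross-entries $K_{ie}$, $K_{ei}$ for $i \ne e$ have strictly positive $q_e$-degree; and the remaining block entries coincide with $K^{X_{r-1}}_{ij}$ up to corrections of strictly positive $q_e$-degree. Writing $\lambda I - K$ in block form
$$
\lambda I - K = \begin{pmatrix} A & b \\ c^T & d \end{pmatrix}
$$
with $d = \lambda + q_e^{-1} + O(q)$, the Schur complement identity yields
$$
\chi_{X_r}(\lambda) \;=\; d \cdot \det(A) \;-\; c^T \operatorname{adj}(A)\, b.
$$

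The dominant contribution should be $d \cdot \chi_{X_{r-1}}(\lambda)$. Its Newton polygon is the Minkowski sum of the Newton polygon of $\chi_{X_{r-1}}$ (known by induction) with the segment from $(0,-1)$ to $(1,0)$ coming from $\lambda + q_e^{-1}$: geometrically, one translates the polygon of $\chi_{X_{r-1}}$ downward by $1$ and appends a slope-$1$ segment of length $1$ on its right side. Combined with the inductive hypothesis, this is precisely the polygon claimed for $\chi_{X_r}$.

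The key obstacle is to rule out that the subleading contributions -- the difference $\det(A) - \chi_{X_{r-1}}(\lambda)$, multiplied by $d$, together with the off-diagonal cofactor term $c^T \operatorname{adj}(A)\,b$ -- produce Newton pairs below the lower convex hull of the dominant term. This is delicate because the constant term of $\chi_{X_{r-1}}(\lambda)$ can already reach $q$-degree as low as $-(r-1)$ by the inductive hypothesis, so entries of $\operatorname{adj}(A)$ can carry large negative $q$-degree. The required uniform lower bounds on the $q$-degrees of $b$, $c$ and of the relevant minors rest on the structure of the effective cone together with the recurrences of Theorem~\ref{thm:gwburules} and equations~\eqref{eq:Rm}--\eqref{eq:Rk}, and this is where the case analysis according to the three families of minimal surfaces ($\CP{2}$, ruled surfaces, surfaces with numerically effective canonical class) becomes unavoidable.
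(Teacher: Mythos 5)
Your structural plan — induction on $r$, Laplace/Schur-complement expansion along the new exceptional index $e$, the diagonal term $\lambda + q_e^{-1}$ supplying the translation by $(0,-1)$ and the appended slope-$1$ segment, and then bounding the off-block contributions — is exactly the skeleton of the paper's proof of Theorem~\ref{mainthm1}. But the proposal has one concrete error and leaves the critical content unaddressed.

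The error: you assert that \emph{all} cross-entries $K_{ie}$, $K_{ei}$ with $i\ne e$ have strictly positive $q_e$-degree, citing Proposition~\ref{prop:Kij}. That proposition only covers $i\ne p$, $j\ne 0$. The remaining trivial entries are given by Lemma~\ref{lem:Ki0Kpi}: $K_{e0}=T_e^\vee(c_1(X_r))=-1$ and $K_{pe}=T_e\cdot c_1(X_r)=\pm 1$, which are nonzero constants of $q_e$-degree zero. Consequently the cross term $c^T\operatorname{adj}(A)\,b$ contains a full cofactor of $A$ with \emph{no} compensating positive power of $q_e$, so positivity of the $q_e$-degree cannot by itself push $c^T\operatorname{adj}(A)\,b$ above the candidate polygon. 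This is precisely why the paper isolates the term $C_{pe}$ for separate analysis.

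The gap: the obstacle you correctly identify — showing that $d\cdot(\det A - \chi_{X_{r-1}})$ and $c^T\operatorname{adj}(A)\,b$ contribute no Newton pairs below the lower hull — is where the entire content of the theorem lives, and your proposal does not carry it out. The paper resolves it by explicit leading-term computations for $K$ in each class of minimal model (Proposition~\ref{prop:c1antinef} for $-c_1$ nef, Proposition~\ref{prop:ggeq0} for ruled surfaces, Lemma~\ref{lem:p2min} for $\CP{2}$), together with the degree-stability statements (Lemmas~\ref{lem:rgeq1ruled}, \ref{lem:gneq1rgeq2}) asserting $\min\deg K_{ij}=\min\deg\overline{K}_{ij}$ off the exceptional block; these feed into the inequality~\eqref{eq:mindegineqruled}, which is what actually forces every off-diagonal contribution onto or above the polygon. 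An appeal to ``the structure of the effective cone and the recurrences'' is not a substitute for these estimates, and in particular the antinef case behaves quite differently from the rational cases (there $K$ is already lower triangular, Proposition~\ref{prop:c1antinef}). You also omit entirely the third claim the paper proves — uniqueness of the monomials at the salient vertices of the polygon — without which one cannot conclude that the lower hull is exactly (and not a proper subset of) the claimed broken line.
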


\begin{proof}

The proof contains three steps. First, we show that there do exist Newton pairs as in the statement. Second, that any further Newton pair lies on or above the diagram determined by the pairs of the statement. Third, that the monomials corresponding to the salient vertices of the boundary are unique.

For the first one we will use induction. We will show that the Newton polygon of $\chi_{X_r}(\lambda)$ is obtained 
by translating the Newton polygon of $\chi_{X_{r-1}}(\lambda)$ by the vector $(0,-1)$, 
and extending it by a  further vertex $(1+\mathrm{deg }\,\chi_{X_{r-1}}, 0)$ (on its right). 
 
From Proposition~\ref{prop:Kij} it follows that each Newton pair of 
$\chi_{X_{r-1}}(\lambda)$ translated by $(0,-1)$ 
appears as a Newton pair of $\chi_{X_{r}}(\lambda)$. 
Namely, in the $(m+r,m+r)$ entry of $K$ there appears a minimal term $q_{m+r}^{-1}$. Hence, for every minimal monomial 
\[
\begin{multlined}
    c_{x,y_1,\ldots , y_{m+r-1}} \lambda^x q_1^{y_1}\cdots q_{m+r-1}^{y_{m+r-1}}
    \\ =
    c_{x,y_1,\ldots , y_{m+r-1}} \nu_1^{y_1}\cdots \nu_{m+r-1}^{y_{m+r-1}} \nu_{m+r+1}^{y_{m+r+1}}
    \lambda^x q^{y_1 + \cdots +y_{m+r-1}}
\end{multlined}
\]
of $\chi_{X_{r-1}}(\lambda)$ with $c_{x,y_1,\ldots , y_{m+r-1}} \neq 0$, 
we get a monomial 
\begin{equation*}
c_{x,y_1,\ldots , y_{m+r-1}}  \nu_1^{y_1}\cdots \nu_{m+r-1}^{y_{m+r-1}}
   \lambda^x \nu_{m+r}^{-1} q^{y_1 + \cdots y_{m+r-1}-1}
\end{equation*}
in $\chi_{X_{r}}(\lambda)$.
In view of our assumption $\nu_{m+r}\neq 0$, this shows the first statement.

For the second and third statements we will give a case-by-case proof using the following classification of minimal surfaces. As $X_{\min}$ is minimal, it falls into one of the following cases \cite[Theorem~1.29]{kollar1998birational}:
\begin{enumerate}
    \item $-c_1(X_{\min})$ is nef
    \item $X_{\min}$ is a minimal ruled surface over some curve $\Sigma_g$
    \item $X_{\min} \simeq \CP2$
\end{enumerate}
\end{proof}

\subsection{Case 1: $-c_1(X_{\min})$ is nef}

Theorem~\ref{mainthm1} 
 in this case is deduced from the following explicit calculations.

\begin{lemma}
\label{lem:c1antinef0}
Let $\beta \in H^2(X_{\min})$ such that $n_{\beta}\geq 0$. Then $N_{\beta}=0$.
\end{lemma}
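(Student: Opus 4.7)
The claim is almost immediate once one splits on effectiveness of $\beta$, so the proof plan is essentially a two-line dichotomy built around the formula $n_\beta = c_1(X_{\min})\cdot \beta - 1$ (the expected dimension recalled right before Theorem~\ref{thm:gwburules}).

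First I would note that $N_\beta$ vanishes whenever $\beta$ lies outside the effective cone $B$, as recorded in Section~\ref{sec:notback}: in that case the moduli space $\overline{\mathcal{M}}_k(X_{\min},\beta)$ itself is empty, so the integral defining $N_\beta$ is zero. Thus it suffices to handle effective classes $\beta$ (including $\beta = 0$).

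Now suppose $\beta \in B$ is effective. The hypothesis $-c_1(X_{\min})$ is nef means that $-c_1(X_{\min})\cdot \gamma \geq 0$ for every effective class $\gamma$; applying this to $\gamma = \beta$ gives
\[
c_1(X_{\min})\cdot \beta \;\leq\; 0,
\]
and hence
\[
n_\beta \;=\; c_1(X_{\min})\cdot \beta - 1 \;\leq\; -1.
\]
The degenerate case $\beta = 0$ gives $n_\beta = -1$ directly. Either way, $n_\beta < 0$, contradicting the assumption $n_\beta \geq 0$. Therefore no effective $\beta$ can occur in the statement, and the remaining non-effective classes contribute zero by the first observation.

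I do not expect any real obstacle here: the only small subtlety is the convention for the effective cone (in particular whether $\beta=0$ is treated separately), and the identification $H^2(X_{\min}) \cong H_2(X_{\min})$ via Poincar\'e duality implicit in the statement. The argument uses nothing beyond the definition of $n_\beta$, the vanishing of $N_\beta$ off the effective cone, and the definition of nefness.
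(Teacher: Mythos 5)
Your proof is correct and is essentially the same argument as the paper's: both hinge on the observation that $N_\beta \neq 0$ forces $\beta$ to be (represented by) an effective curve, so nefness of $-c_1(X_{\min})$ gives $c_1(X_{\min})\cdot\beta \leq 0$, while $n_\beta \geq 0$ forces $c_1(X_{\min})\cdot\beta = n_\beta + 1 \geq 1$. The only cosmetic difference is that you organize the dichotomy as ``non-effective vs.\ effective'' up front and derive a contradiction with the hypothesis, whereas the paper runs the contrapositive directly from $N_\beta \neq 0$; the content is identical.
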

\begin{proof}
If $N_{\beta} \neq 0$, then there is a (not necessarily irreducible or reduced) curve $C \subset X_{\min}$ in the class $\beta$ for which
\[\int_C c_1( X_{\min}) > 0\]
But this contradicts that $-c_1(X_{\min})$ is nef.
\end{proof}

\begin{lemma}
\label{lem:c1antinef}
Let $(\beta,\alpha) \in H^2(X_{r})$ such that $n_{\beta,\alpha} \geq 0$. Then
\[N_{\beta,\alpha}=
\begin{cases}
1 & \textrm{ if } \alpha = -[i] \quad \textrm{for some } 1 \leq i \leq r \\
0 & \textrm{ otherwise}
\end{cases}\]
\end{lemma}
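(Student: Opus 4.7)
The plan is to deduce the vanishing from the general GW properties in Theorem~\ref{thm:gwburules} together with the numerical constraint imposed by $-c_1(X_{\min})$ being nef, splitting into the cases $\beta=0$ and $\beta\neq 0$. The case $\beta=0$ is handled directly by Theorem~\ref{thm:gwburules}\eqref{thm:gwburules_b}: one has $N_{0,\alpha}=1$ precisely when $\alpha=-[i]$ for some $i$, and $0$ otherwise, which matches the two branches of the conclusion.

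Now suppose $\beta\neq 0$. First I would observe that if $N_{\beta,\alpha}\neq 0$ then $\overline{M}_{0,n_{\beta,\alpha}}(X_r,(\beta,\alpha))$ is nonempty, so $(\beta,\alpha)$ is represented by some stable map $f\colon C\to X_r$. Composing with the blow-down $b\colon X_r\to X_{\min}$ and pushing forward gives $b_\ast(\beta,\alpha)=\beta$ (since $b_\ast E_i=0$), so $\beta$ is an effective class on $X_{\min}$. The nef hypothesis on $-c_1(X_{\min})$ then forces $\int_\beta c_1(X_{\min})\leq 0$, and hence
\[
n_{\beta,\alpha} \;=\; \int_\beta c_1(X_{\min}) - |\alpha| - 1 \;\leq\; -|\alpha|-1.
\]
Under the standing assumption $n_{\beta,\alpha}\geq 0$, this forces $|\alpha|\leq -1$, so at least one entry $a_i$ of $\alpha$ is strictly negative. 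Since $\beta\neq 0$ is effective, Theorem~\ref{thm:gwburules}\eqref{thm:gwburules_c} then yields $N_{\beta,\alpha}=0$, completing the argument. Note that the subcase $\alpha=-[i]$ with $\beta\neq 0$ is already covered here since $a_i=-1<0$, so the ``$N=1$'' branch of the conclusion really occurs only when $\beta=0$.

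There is no genuine obstacle: the proof is essentially numerology, and the only non-formal input beyond Theorem~\ref{thm:gwburules} is the elementary observation that the blow-down pushforward sends effective classes to effective classes. If anything warrants care in the write-up, it is making that reduction from $(\beta,\alpha)$ effective on $X_r$ to $\beta$ effective on $X_{\min}$ fully explicit, since this is the pivot that lets the nef hypothesis enter the estimate on $n_{\beta,\alpha}$.
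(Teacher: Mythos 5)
Your proof is correct and takes a somewhat different, cleaner route than the paper's. You split on $\beta=0$ versus $\beta\neq 0$: the first case is Theorem~\ref{thm:gwburules}\eqref{thm:gwburules_b} verbatim, and the second is pure numerology --- effectiveness of $\beta$ (obtained by pushing forward the stable-map image) plus nefness of $-c_1(X_{\min})$ gives $\int_\beta c_1(X_{\min})\leq 0$, which together with $n_{\beta,\alpha}\geq 0$ forces $|\alpha|\leq -1$, so some $a_i<0$, so Theorem~\ref{thm:gwburules}\eqref{thm:gwburules_c} gives $N_{\beta,\alpha}=0$. The paper instead splits on $\alpha=0$ versus $\alpha\neq 0$, reduces $\alpha=0$ to Lemma~\ref{lem:c1antinef0}, and for $\alpha\neq 0$ picks a representing curve $C$, applies the projection formula with $c_1(X_r)=p^{\ast}c_1(X)-\sum_i E_i$ to show $\int_C c_1(X_r)<0$, and then splits further on whether $C$ is contained in the exceptional locus. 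The inputs are the same (nefness and Theorem~\ref{thm:gwburules}), but your arrangement skips the projection formula and the curve-by-curve geometry, since the dichotomy $\beta=0$ versus $\beta\neq 0$ already captures whether $p_\ast C$ is a point. Your closing observation that the ``$N=1$'' branch occurs only for $\beta=0$ is worth keeping: read literally, the lemma would assign $N_{\beta,-[i]}=1$ to any effective $\beta\neq 0$ with $\int_\beta c_1(X_{\min})=0$ (such classes exist, e.g.\ on a K3 surface), whereas part~\eqref{thm:gwburules_c} forces this invariant to vanish; the sharper statement, which both proofs actually establish, is that $N_{\beta,\alpha}=1$ precisely when $(\beta,\alpha)=(0,-[i])$.
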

\begin{proof}
If $\alpha=0$, we are done by Lemma~\ref{lem:c1antinef0}.

Suppose that $\alpha \neq 0$ and $N_{\beta,\alpha} \neq 0$. As in the previous proof, there exists then a curve $C \subset X_r$ in the class $(\beta,\alpha)$ for which
\begin{equation} 
\label{eq:intpos}
\int_C c_1(X_r) > 0.\end{equation}
As \[c_1(X_r)=p^{\ast}(c_1(X_{\min}))-E_1-\dots-E_r,\] we have that
\[ \int_C c_1(X_r) = \int_C p^{\ast}(c_1(X_{\min}))-E_1\cdot C-\dots-E_r\cdot C.\]
On one hand, we can apply the projection formula \cite[Proposition~2.5~(c)]{fulton2017intersection} on the proper map $p$ to obtain that
\[\int_C p^{\ast}(c_1( X_{\min}))=\int_{p_{\ast}(C)}c_1(X_{\min}).\]
On the other hand, the entries of $\alpha$ must be all nonnegative by Theorem~\ref{thm:gwburules}~\eqref{thm:gwburules_c} because $\beta \neq 0$. Suppose that there is at least one positive entry in $\alpha$.
Then
\[ 1 \leq E_1\cdot C+\dots+E_r\cdot C.\]

If $C$ is not contained entirely in the exceptional locus, then because $p$ is birational, the image
\[p_{\ast}(C) \subset X_{\min}\] 
is a (not necessarily rational) curve. As $-c_1( X_{\min})$ is nef, we must have
\[ c_1( X_{\min})(p_{\ast}(C)) \leq 0. \]
Hence, $\alpha \neq 0$ would imply \[\int_C c_1(X_r)< 0.\] 
But this would contradict \eqref{eq:intpos} when $N_{\beta,\alpha} \neq 0$. 

If $C$ is contained in the exceptional locus of $p$, then by Theorem~\ref{thm:gwburules} it must be of class $-[i]$ 
for a single exceptional curve $E_i$. 
\end{proof}

\begin{proposition} When $-c_1(X_{\min})$ is nef,
\label{prop:c1antinef}
\[ K_{ij} = \begin{cases}
q_i^{-1} & \textrm{if } i=j \in \{ m+1,\dots,m+r \} \\
0 & \textrm{otherwise} 
\end{cases}\]
 for $i \neq p$ and $j \neq 0$. In particular, $K$ is a lower triangular matrix with diagonal \[(0,\dots, 0,q_{m+1}^{-1},\dots,q_{m+r}^{-1},0).\]
\end{proposition}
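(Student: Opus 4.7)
The plan is to specialize the general formula for $K$ to the nef setting by exploiting the strong Gromov--Witten vanishing already established in Lemmas~\ref{lem:c1antinef0} and~\ref{lem:c1antinef}. I would either iterate Proposition~\ref{prop:Kij} along the tower $X_{\min} \to X_1 \to \cdots \to X_r$, or, equivalently, decompose $c_1(X_r) = c_1(X_{\min}) - E_1 - \cdots - E_r$ and repeat the proof of Proposition~\ref{prop:Kij} verbatim, picking up one $q_{m+\iota}^{-1}$ contribution from each $-E_\iota$. In either presentation $K_{ij}$ splits, for $i \neq p$ and $j \neq 0$, into a base piece $\overline{K}_{ij}$ coming from $X_{\min}$, trivial pieces of the form $q_{m+\iota}^{-1}\delta_{i,m+\iota}\delta_{j,m+\iota}$ produced by the singular terms in Lemma~\ref{lem:potderivrec}, and bubble pieces collecting those $N_{\beta,\alpha}$-contributions in which $\alpha$ has at least one positive entry.

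Each of these three contributions is then treated in turn. The base piece vanishes because the formula at the end of Section~\ref{subsec:Kstr} writes $\overline{K}_{ij}$ as a sum weighted by $N_\beta$ with $\beta$ effective and $n_\beta \geq 0$, all of which are zero by Lemma~\ref{lem:c1antinef0}. The bubble piece vanishes because any nonzero $N_{\beta,\alpha}$ with $n_{\beta,\alpha} \geq 0$ is forced by Lemma~\ref{lem:c1antinef} to satisfy $\alpha = -[\iota]$ for some $\iota$, and in particular has no positive entry, so it lies outside the summation range. Only the trivial pieces survive, yielding $K_{m+\iota,m+\iota} = q_{m+\iota}^{-1}$ and $K_{ij} = 0$ for all other $(i,j)$ with $i \neq p$ and $j \neq 0$.

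For the ``in particular'' assertion, Lemma~\ref{lem:Ki0Kpi} handles the remaining entries: the first column $K_{i0} = T_i^\vee(c_1(X_r))$ and the last row $K_{pj} = T_j \cdot c_1(X_r)$ may be nonzero, but they sit on or below the diagonal in the basis order $(T_0, T_1, \ldots, T_m, T_{m+1}, \ldots, T_{m+r}, T_p)$, and the corner entries $K_{00}$ and $K_{pp}$ vanish by degree reasons. I expect the main technical point to be a clean formulation of the $r$-fold analogue of Proposition~\ref{prop:Kij}, that is, verifying that the corrections from distinct exceptional divisors do not interact. This should ultimately be transparent because generic exceptional divisors are disjoint and the pieces of $c_1(X_r)$ contribute additively to the quantum product; the cleanest route is probably induction on $r$, where at each step one invokes Lemma~\ref{lem:c1antinef} (not Lemma~\ref{lem:c1antinef0}) to kill the bubble sum created by that step, since the intermediate blow-ups are no longer minimal.
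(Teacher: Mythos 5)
Your argument is the one the paper intends: the paper gives no written proof of Proposition~\ref{prop:c1antinef} beyond the remark that it is ``deduced from the following explicit calculations,'' and your deduction (base piece killed by Lemma~\ref{lem:c1antinef0}, bubble pieces killed by Lemma~\ref{lem:c1antinef}, only the trivial $q_e^{-1}$ contributions surviving, first column and last row handled by Lemma~\ref{lem:Ki0Kpi}) is exactly that. Two remarks. First, you can skip the ``iterate Proposition~\ref{prop:Kij}'' versus ``$r$-fold analogue'' discussion: Lemma~\ref{lem:c1antinef} is stated directly for $X_r$ and covers every class $(\beta,\alpha)$ at once, so the genus-zero potential of $X_r$ collapses to $\sum_{\iota=1}^r q_{m+\iota}^{-1}$ (plus classical terms), and $K$ can then be read off from Definition~\ref{def:quantum_product} and the Euler field without any induction or interaction worries between exceptional divisors. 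Second, a sign point you have inherited from the statement: since $\partial_e = q_e\,\partial/\partial q_e$ gives $\partial_e^3(q_e^{-1})=-q_e^{-1}$, the surviving diagonal entry is $-q_{m+\iota}^{-1}$, in agreement with Proposition~\ref{prop:Kij} (whose $ee$-term is $-q_e^{-1}\delta_{ie}\delta_{je}$), with Proposition~\ref{prop:ggeq0}\eqref{prop:ggeq0_jj}, with Lemma~\ref{lem:p2min}(5), and with Examples~\ref{ex:SigmagBUK}, \ref{ex:p2ex1}, \ref{ex:p2ex2}; the $+q_i^{-1}$ in the proposition's statement (which your write-up reproduces) is a typo, though it is immaterial for the Newton polygon and spectral asymptotics that the proposition is used for.
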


\subsection{Case 2: $X_{\min}$ is a minimal ruled surface}

Let $U$ be a vector bundle of rank two over some curve $\Sigma_g$ of genus $g$ such that $ X_{\min}=\mathbb{P}(U)$ with 
\[\pi: X_{\min}=\mathbb{P}(U) \to \Sigma_g \]
the projection. The second homology of $X_{\min}$ is generated by $c=c_1(\mathcal{O}_{ X_{\min}}(1))$ and $f=[F]$ where $F$ is a fiber of $\pi$.
If we denote $u= \mathrm{deg}\, U \coloneqq \mathrm{deg} (\det\, U)$, these classes intersect each other as
\[ f^2=0, \quad c\cdot f=1, \quad c^2=u.\]
To keep track of their different roles, only in this section we will write $q_f$ and $q_c$ for the variables corresponding to $f$ and $c$ instead of numbering them.
The inverse of the matrix (written in the basis $f,c$)
\[
\begin{pmatrix}
0 & 1 \\
1 & u
\end{pmatrix}
\]
is
\[
\begin{pmatrix}
- u & 1 \\
1 & 0
\end{pmatrix}
\]
There are two cases to consider when describing the cone of effective curves on $ X_{\min}$.

First, $U$ can be unstable. This means that there exists a line bundle quotient $A$ of degree \[a = \mathrm{deg}\,(A) \leq \frac 12 u.\] 
Then
\[ \mathbb{P}(A) \subset \mathbb{P}(U)= X_{\min} \]
is an effective curve in the class $af+c$, and the ray spanned by it bounds the cone of curves \cite[Chapter 1, 1.5.A]{lazarsfeld2017positivity}. 
The other boundary of the cone of curves is the ray spanned by $f$.

Second, when $U$ is not unstable, then it is semistable. Then the cone of effective curves is bounded by the rays spanned by $c$ and $f$.

It is known \cite[V.2.10]{hartshorne1977algebraic} that \[c_1( X_{\min})=2c+(2-2g-u)f\] 
For a class $\beta=bf+dc$ the expected dimension is therefore 
\[
\begin{aligned}
n_{\beta} & =c_1(X_{\min})\cdot (bf+dc)-1 \\
& =(2-2g+u)d+2b-1 
\end{aligned}
\]

\begin{example}
If $u=0$, the expected dimension $n_{\beta}$ is nonnegative when $b > d(g-1)$. The lines $b = d(g-1)$ for small values of $g$ look as follows.
\begin{center}
\begin{tikzpicture}[scale=0.8]
\draw[->] (-2,0) -- (2,0);
\draw[->] (0,-2) -- (0,2);
\node at (-0.3,2) {$c$};
\node at (2,0.3) {$f$};
\draw[dashed] (2,-2) -- (-2,2);
\node at (-2,2.3) {$g=0$};
\draw[dashed,->] (0.05,-2) -- (0.05,2);
\node at (0.4,2.3) {$g=1$};
\draw[dashed] (-2,-2) -- (2,2);
\node at (2,2.3) {$g=2$};
\draw[dashed] (-2,-1) -- (2,1);
\node at (2.6,1) {$g=3$};
\end{tikzpicture}
\end{center}
The open half-planes to the right of the lines $b = d(g-1)$ correspond to classes with nonnegative expected dimension.
\end{example}

\subsubsection{Higher genus}

If $L$ is a line bundle on $\Sigma_g$, then $\mathbb{P}(U) \simeq \mathbb{P}(U\otimes L)$ \cite[Chapter V, Proposition~2.2]{hartshorne1977algebraic} and 
\[ \mathrm{deg}\,(U\otimes L)=\mathrm{deg}\,(U)+2\mathrm{deg}\,(L). \]
By replacing $U$ with some $U\otimes L$ if necessary,
we will assume that $u \neq 0$ 
and $u \neq 2-2g$. 

Recall from Section~\ref{subsec:newton_poly} that the variable $t_p$ is considered to be a constant. We will say that a summand of an entry of $K$ is \emph{of lowest order}, if the sum of the exponents of the $q$-variables is the lowest in it among all summands of that entry. We will show the following.

\begin{proposition}
\label{prop:ggeq0}
The lowest order terms of $K$ on $X_r$, the blow-up of a ruled surface over a curve of positive genus  in $r$ points are as follows:
\begin{enumerate}
    \item $2q_f$ at $0f$ and $cp$ \label{prop:ggeq0_0f}
    \item $q_ft_p$ at $cf$ \label{prop:ggeq0_cf}
    \item $q_fq_j$ at $jf$ and  $cj$, $3\leq j \leq r+2$ \label{prop:ggeq0_cj}
    \item $\pm \frac{u}{2}q_f^2q_j^2$ at $0j$ and $jp$, $3\leq j \leq r+2$ \label{prop:ggeq0_jp}
    \item $-q_j^{-1}$ at $jj$, $3\leq j \leq r+2$ \label{prop:ggeq0_jj}
    \item $0c$, $fc$, $fp$, $0p$, $fj$ and $jc$, $3\leq j \leq r+2$ are all zero \label{prop:ggeq0_0c}
\end{enumerate}
\end{proposition}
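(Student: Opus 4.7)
The plan is to compute each entry of $K$ using Proposition~\ref{prop:Kij} applied inductively across the $r$ blow-ups, extracting the leading monomial in $q$ and tracking symmetries among exceptional indices through Lemma~\ref{lem:excperm}.

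I would first classify the Gromov--Witten invariants on $X_r$ that can contribute at lowest order. On the minimal ruled surface $X_{\min}$ of genus $g\ge 1$, every genus-zero stable map factors through a fiber because no non-constant morphism $\mathbb{P}^1\to\Sigma_g$ exists; this forces $N_\beta=0$ whenever $\beta$ has a non-trivial section component, leaving essentially only $N_f=1$ with $n_f=1$. On $X_r$ the additional low-order invariants are $N_{0,-[i]}=1$ (the exceptional class $E_i$) and $N_{f,[i]}=1$ (the strict transform of the fiber through the $i$-th blow-up point), supplemented by a short list of mixed classes $(f,\alpha)$ with small $|\alpha|$ generated from Theorem~\ref{thm:gwburules} and the WDVV recursions~\eqref{eq:Rm}--\eqref{eq:Rk}.

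The core of the proof consists in matching each entry listed in the statement to the classes contributing at lowest order via Proposition~\ref{prop:Kij}. The diagonal entries $K_{jj}=-q_j^{-1}$ for exceptional $j$ come directly from the $-q_e^{-1}\delta_{ie}\delta_{je}$ term of that proposition, applied at each blow-up. The vanishing claims reduce to derivatives $F_{\ldots}$ whose non-vanishing would demand $T_f\cdot\beta\neq 0$, i.e.\ a section component in a rational class, which is impossible for $g\ge 1$. The non-vanishing entries are tracked explicitly: the fiber class $f$ and the strict-transform classes $(f,[i])$ produce the $2q_f$, $q_f t_p$, and $q_f q_j$ monomials, while the coefficient $u/2$ in $K_{0j}$ and $K_{jp}$ originates from the entry $g^{ff}=-u$ of the inverse intersection matrix multiplying these contributions. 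The $\pm$ sign in that last case reflects the difference in the factor $(1-n_{\beta,\alpha}+2\varepsilon)$ of Proposition~\ref{prop:Kij} between $\varepsilon=\delta_{i0}$ and $\varepsilon=\delta_{jp}$.

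The last step is a completeness verification: no class omitted from the enumerated list can yield a strictly lower-order monomial. I would argue by induction on $r$ starting from the minimal model, so that the inductive hypothesis handles $X_{r-1}$; any new minimal contribution on $X_r$ must then come from a class $(\beta,\alpha)$ that uses the newest exceptional divisor non-trivially, and the dimension constraint $n_{\beta,\alpha}=\int_\beta c_1(X)-|\alpha|-1\ge 0$ combined with $g\ge 1$ restricts $\beta$ to a fiber multiple, placing $(\beta,\alpha)$ on the short list. I expect the main obstacle to be the careful sign and symmetry bookkeeping for entries such as $K_{jf}$, $K_{cj}$, $K_{0j}$, and $K_{jp}$, where several classes $(f,\alpha)$ with $|\alpha|\ge 1$ contribute and must be summed over permutations of exceptional indices via Lemma~\ref{lem:excperm}; this combinatorial step will have to be verified case-by-case.
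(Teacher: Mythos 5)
Your outline matches the paper's strategy in broad strokes (classify non-vanishing invariants using $g\geq 1$, read off entries from Proposition~\ref{prop:Kij}, exploit exceptional symmetry), but two concrete ingredients are wrong or missing, and they are exactly the ones that carry the hardest parts of the proposition.

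First, the $\pm\tfrac{u}{2}$ coefficient at $0j$ and $jp$ does not come from the class $f-E_j$ or its relatives with $\beta$ a single fiber. The contributing class is $(2f,2[j])$, i.e.\ $2f-2E_j$. Its expected dimension is $n_{2f,2}=1$, so it is the unique new class surviving the constraints of Lemmas~\ref{lem:Xrhor}--\ref{lem:sigmagzero} at $t_p$-order $0$ for the entries $0j$ and $jp$. Its invariant $N_{2f,2}=\tfrac{u}{8}$ must be computed from the WDVV recursion~\eqref{eq:Rk}; it cannot be read off directly from $g^{ff}=-u$ ``multiplying'' the $(f,[i])$ contributions, which is what your sketch asserts. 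Your ``short list of mixed classes $(f,\alpha)$'' therefore leaves out the class that actually produces the coefficient, and the explicit recursion computation is the step your argument would need to supply.

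Second, your explanation of the $\pm$ sign is wrong. The factor $(1-n_{\beta,a}+2\varepsilon)$ in Proposition~\ref{prop:Kij} equals $2$ at both $0j$ (where $\varepsilon=\delta_{00}=1$) and $jp$ (where $\varepsilon=\delta_{pp}=1$); it is identical in the two cases. The sign discrepancy comes from the factor $\sum_k g^{ki}T'_k$: for $i=0$ this is $g^{p0}=+1$, while for $i=j$ exceptional it is $g^{jj}=-1$. Related to this, your completeness step is also under-argued: the paper's Lemma~\ref{lem:K0pzero} (for $g>0$, all invariants with $n_{\beta,\alpha}>1$ vanish) is what caps the Taylor expansion of $K$ in $t_p$ at order one and reduces the enumeration to solving $0\leq 2b-1\leq b$ and $0\leq 2b-2\leq b$. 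You mention the lower bound $n_{\beta,\alpha}\geq 0$ but not this upper bound, without which ``a short list generated from the recursions'' does not obviously terminate.
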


To prove Proposition~\ref{prop:ggeq0}, we first need some auxiliary results. 
\begin{lemma}
\label{lem:K0pzero}
For $g >0$, all classes $(\beta,\alpha)=(bf+dc,\alpha) \in H^2(X_r)$ with $n_{\beta,\alpha}>1$ have $N_{\beta,\alpha}=0$.
\end{lemma}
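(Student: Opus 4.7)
The plan is to exploit the fact that, when $g > 0$, the base curve $\Sigma_g$ admits no non-constant morphism from $\mathbb{P}^1$. Given a genus-zero stable map $f: C \to X_r$ with connected nodal rational domain $C$, the composition $\pi \circ b \circ f: C \to \Sigma_g$ must be constant, so the image of $f$ is contained in a single fibre $(\pi \circ b)^{-1}(p)$ for some $p \in \Sigma_g$. Such a fibre is either a smooth rational curve (when $p$ is disjoint from the blown-up points) or a tree of $\mathbb{P}^1$'s consisting of the strict transform of the fibre together with the exceptional divisors lying over $p$; in either case its second homology is spanned by multiples of $f$ and of the exceptional classes $E_i$. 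This forces the coefficient $d$ of $c$ in $\beta$ to vanish: if $d \neq 0$, then the moduli space $\overline{M}_{0,k}(X_r,(\beta,\alpha))$ is empty and $N_{\beta,\alpha} = 0$ trivially.

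Assuming now $d = 0$, we would package the fibre constraint above into the assertion that, since every stable map sends all of $C$ to a common fibre, the joint evaluation morphism
\[ \mathrm{ev} : \overline{M}_{0,k}(X_r,(\beta,\alpha)) \to X_r^k, \qquad k = n_{\beta,\alpha}, \]
factors through the $k$-fold fibred product $Y_k \coloneqq X_r \times_{\Sigma_g} \cdots \times_{\Sigma_g} X_r \subset X_r^k$. Since $X_r \to \Sigma_g$ has one-dimensional fibres, $Y_k$ has dimension $k+1$, whereas $X_r^k$ has dimension $2k$. By the hypothesis $k = n_{\beta,\alpha} \geq 2$, we have $k+1 < 2k$, so $Y_k$ is a proper closed subvariety of $X_r^k$. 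On the other hand, the virtual dimension of the moduli space equals
\[ \int_{(\beta,\alpha)} c_1(X_r) - 1 + k = n_{\beta,\alpha} + k = 2k = \dim X_r^k, \]
so the pushforward $\mathrm{ev}_{\ast}\bigl([\overline{M}_{0,k}(X_r,(\beta,\alpha))]^{\mathrm{Vir}}\bigr)$ lies in $A_{2k}(X_r^k)$ but is supported on $Y_k$. Any top-dimensional Chow class on the irreducible variety $X_r^k$ supported on a proper closed subvariety must vanish, and the projection formula then yields
\[ N_{\beta,\alpha} = \int_{[\overline{M}_{0,k}(X_r,(\beta,\alpha))]^{\mathrm{Vir}}} \prod_{i=1}^{k} \mathrm{ev}_i^{\ast}(T_p) = \int_{X_r^k} \mathrm{ev}_{\ast}\bigl([\overline{M}_{0,k}]^{\mathrm{Vir}}\bigr) \cdot T_p^{\boxtimes k} = 0. \]

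The main obstacle, if any, is justifying the fibre-product factorisation of $\mathrm{ev}$, i.e.\ verifying that \emph{every} stable map in the moduli space (not merely the generic ones) sends its image into a single fibre of $\pi \circ b$. But this is exactly the content of the first paragraph: rationality of $C$ together with $g > 0$ leaves no room for a non-constant composed map to $\Sigma_g$. Once this geometric reduction is in place, the remainder of the argument is a purely formal Chow-theoretic dimension count and requires no finer Gromov--Witten input than the projection formula applied to $\mathrm{ev}$.
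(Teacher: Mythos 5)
Your proof is correct and rests on the same geometric idea as the paper's: a connected genus-zero domain curve must map under $\pi \circ b$ to a single point of $\Sigma_g$ when $g > 0$, so the image of the evaluation map is confined to the fibre product, whose dimension $k+1$ is too small once $k = n_{\beta,\alpha} \geq 2$. The paper states this informally as the impossibility of passing through two generic points in different fibres, whereas you make the same dimension count precise via a Chow-theoretic vanishing argument; the underlying mechanism is identical.
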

\begin{proof}
GW invariants as in the claim count $n_{\beta,\alpha}$-pointed genus 0 stable maps passing through two generic points. These generic points can be chosen to lie in different fibers of the composite morphism 
\[ X_r \xrightarrow{b} X_{\min} \xrightarrow{\pi} \Sigma_g,\] and hence they have disjoint image in $\Sigma_g$. Therefore the composition of a $n_{\beta,\alpha}$-pointed stable map with $\pi \circ b$ would be surjective. In particular, the domain of the stable map must have at least one irreducible component which is nonconstant when composed with $\pi \circ b$. But such a morphism from a rational curve cannot exist when $g >0$.
\end{proof}

\begin{lemma}
\label{lem:Xrhor}
For $g >0$, all classes $(\beta,\alpha)=(bf+dc,\alpha) \in H^2(X_r)$ with $d >0$ 
have $N_{\beta,\alpha}=0$.
\end{lemma}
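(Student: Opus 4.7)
The proof will closely mirror that of Lemma~\ref{lem:K0pzero}, but replaces the condition ``many marked points forcing the image to spread out on $\Sigma_g$'' with the condition ``the horizontal degree $d$ is positive''. I would argue by contradiction: suppose $N_{\beta,\alpha} \neq 0$, so there exists a genus $0$ stable map
\[
    \varphi \colon C \to X_r
\]
with $\varphi_{*}[C] = (\beta,\alpha) = bf + dc - \sum_{i=1}^r a_i E_i$. The domain $C$ has arithmetic genus $0$, so each of its irreducible components is a $\mathbb{P}^1$.

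Composing with the projection $\pi \circ b \colon X_r \to X_{\min} \to \Sigma_g$, I get, for each irreducible component $C_\alpha \subset C$, a morphism $\pi \circ b \circ \varphi \vert_{C_\alpha} \colon \mathbb{P}^1 \to \Sigma_g$. Since $g > 0$, by Hurwitz such a morphism must be constant, so the image of each $C_\alpha$ is contained in a single fiber of $\pi \circ b$, and hence the whole image $\varphi(C)$ lies in a finite union of fibers $b^{-1}(\pi^{-1}(p_1)) \cup \cdots \cup b^{-1}(\pi^{-1}(p_s))$.

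Each such fiber $b^{-1}(\pi^{-1}(p_k))$ consists of the strict transform of the fiber $\pi^{-1}(p_k)$ (whose class is $f$ minus the exceptional classes of the blown-up points lying on it) together with those exceptional divisors $E_i$ themselves. In particular, the class of any algebraic cycle supported in $b^{-1}(\pi^{-1}(p_k))$ lies in the $\mathbb{Z}$-submodule of $H_2(X_r,\mathbb{Z})$ spanned by $f, E_1, \ldots , E_r$; equivalently, its $c$-coefficient in the basis $\{f, c, E_1, \ldots , E_r\}$ vanishes. Summing over components, the $c$-coefficient of $\varphi_{*}[C]$ is $0$, contradicting the assumption $d > 0$.

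The main point where one must be careful is the decomposition of the reducible fibers $b^{-1}(\pi^{-1}(p_k))$: I must verify that the strict transform of $\pi^{-1}(p_k)$, while not equal to $f$ in general, still has trivial $c$-coefficient, which follows from $c \cdot E_i = 0$ for generically blown-up points and the fact that pulling back via $b^{*}$ preserves the classes $f$ and $c$. Beyond that observation, the argument is purely a restriction on where rational curves can map, analogous to Lemma~\ref{lem:K0pzero}.
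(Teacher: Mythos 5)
Your proof is correct and takes essentially the same approach as the paper: both arguments reduce to the fact that a nonconstant morphism from a rational curve to $\Sigma_g$ with $g>0$ is impossible, forcing the image of the stable map into fibers of $\pi\circ b$ and hence forcing the $c$-coefficient to vanish. The paper phrases this more compactly (as $d>0$ implies the composed map is surjective onto $\Sigma_g$, which is impossible for a tree of $\mathbb{P}^1$'s), while you argue the contrapositive and spell out the fiber class computation; the two are logically the same contradiction.
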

\begin{proof}
As $d>0$, the image of such a stable map projects surjectively onto $\Sigma_g$ under the composition $\pi \circ b$. Again, such a morphism from a rational curve cannot exist when $g >0$.
\end{proof}

\begin{lemma}
\label{lem:sigmagaizero}
For $g >0$, all classes $(\beta,\alpha) \in H^2(X_r)$ with $a_i >0$ for more than one $i$
have $N_{\beta,\alpha}=0$.
\end{lemma}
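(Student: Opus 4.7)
My plan is to combine the reduction from Lemma~\ref{lem:Xrhor} with a sharpened version of the projection argument used in the proof of Lemma~\ref{lem:K0pzero}. By Lemma~\ref{lem:Xrhor} (and the fact that non-effective classes have empty moduli spaces of stable maps), I may assume that $\beta = bf$ and that the moduli space representing $(\beta,\alpha)$ is nonempty, so that there exists a genus zero stable map $\phi : C \to X_r$ in this class.

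The heart of the argument is the following. Every irreducible component of $C$ is a smooth rational curve, and the composition $\pi \circ b \circ \phi$ restricted to any such component is a morphism $\mathbb{P}^1 \to \Sigma_g$, hence constant since $g > 0$. Because $C$ is connected through its nodes, continuity at each node forces these component-wise constant values to coincide; propagating through the dual graph of $C$ then yields that $(\pi \circ b \circ \phi)(C) = \{p_0\}$ for a single point $p_0 \in \Sigma_g$, equivalently $b(\phi(C))$ is contained in the single fibre $\pi^{-1}(p_0)$.

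Now suppose for contradiction that $a_i > 0$ for at least two indices, say $i=1,2$ after relabelling. The intersection numbers $E_i \cdot (\beta,\alpha) = a_i > 0$ force $\phi(C)$ to meet each $E_i$, so $b(\phi(C))$ contains both blow-up centres $p_1$ and $p_2$. By genericity of the centres I may assume $\pi(p_1) \neq \pi(p_2)$, so $p_1$ and $p_2$ lie in distinct fibres of $\pi$, contradicting $b(\phi(C)) \subset \pi^{-1}(p_0)$. Hence $N_{\beta,\alpha} = 0$.

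The only mildly delicate ingredient is the propagation of component-wise constancy through the dual graph of $C$: a priori, different components of a reducible domain could map to different fibres of $\pi$, and one must use the node incidences to glue all of them into the same fibre. The rest essentially re-uses the projection technique already deployed in Lemma~\ref{lem:K0pzero} together with the standard genericity of the blow-up centres, so I do not anticipate any further obstacles.
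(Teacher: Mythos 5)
Your proof is correct and takes essentially the same approach as the paper: the paper's (much terser) argument invokes the same projection-to-$\Sigma_g$ constancy plus genericity of the blown-up centres, which you spell out carefully, including the propagation of constancy through the dual graph of a reducible domain.
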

\begin{proof}
If a class $(\beta,\alpha)=(bf,\alpha)$ is such that $a_i > 0$ for more than one exceptional divisor, then the image of a stable map in this class needs to meet these divisors. Just as in the previous two Lemmas, this is impossible as the base curve has $g >0$. 
\end{proof}
\begin{lemma}
\label{lem:sigmagzero}
For $g >0$, if $N_{\beta,\alpha} \neq 0$ then $(\beta,\alpha)$ is of the form $bf-aE$ where $E$ is one of the exceptional divisors and either $0 \leq a \leq b$ or $b=0$ and $a=-1$. 
\end{lemma}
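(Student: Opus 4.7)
Write $\beta = bf + dc$; the plan is to successively pin down $d$, then the structure of $\alpha$, and finally the bound $a \leq b$, using Lemmas~\ref{lem:Xrhor} and~\ref{lem:sigmagaizero} together with elementary effectivity considerations on $X$ and $X_r$. Lemma~\ref{lem:Xrhor} already gives $d \leq 0$. For the reverse inequality, $N_{\beta,\alpha} \neq 0$ implies that the moduli space of stable maps is non-empty, so the image of any stable map in it is an effective $1$-cycle on $X_r$; pushing this cycle down along the blow-up map yields an effective $1$-cycle on $X$ of class $bf + dc$ (the exceptional components are contracted to points and contribute nothing, while pullback classes descend to themselves). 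Every irreducible curve on the ruled surface $X$ has non-negative intersection with a generic fiber, and this intersection number equals the $c$-coefficient of its class; hence the Mori cone of $X$ lies in the half-plane $\{ d \geq 0\}$, forcing $d = 0$ and $\beta = bf$ with $b \geq 0$.

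To analyze $\alpha$, I would split into two cases. If $b = 0$, then $\beta = 0$, and Theorem~\ref{thm:gwburules}\eqref{thm:gwburules_b} immediately forces $\alpha = -[i]$ for some single $i$, giving the exceptional case $b = 0$, $a = -1$ of the statement. If $b > 0$, then $\beta = bf$ is effective, so Theorem~\ref{thm:gwburules}\eqref{thm:gwburules_c} yields $a_i \geq 0$ for every $i$, while Lemma~\ref{lem:sigmagaizero} permits at most one of the $a_i$ to be strictly positive. Hence $\alpha = a\,[i]$ for a unique index $i$ with $a \geq 0$, and $(\beta, \alpha) = bf - aE_i$ as claimed.

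For the bound $a \leq b$ when $b, a > 0$, the proper transform $\tilde{f}_i := f - E_i$ of the fiber through the blown-up point $p_i$ is an irreducible rational $(-1)$-curve satisfying $\tilde{f}_i \cdot (bf - aE_i) = -a < 0$; hence any effective representative of $bf - aE_i$ must contain $\tilde{f}_i$ with some multiplicity $k \geq a$. The residual class $(b-k)f + (k-a) E_i$ is then effective and contains no further copies of $\tilde{f}_i$. Section-type irreducible curves are excluded from any non-negative decomposition of the residual, because they would contribute a positive $c$-coefficient that cannot be cancelled; thus the decomposition uses only $f$, the $\tilde{f}_j$ with $j \neq i$, and the $E_j$, and matching $f$-coefficients forces $b - k \geq 0$, giving $a \leq k \leq b$. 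The main point requiring care is the pushforward argument in the first step: one must verify that the effectivity-of-pushforward reasoning applies uniformly to stable maps with arbitrary component structure, including components contracted into the exceptional locus or mapping non-trivially to exceptional divisors. Once this is in place, the remaining steps are clean applications of Theorem~\ref{thm:gwburules} and elementary intersection theory on $X_r$.
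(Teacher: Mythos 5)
Your proof is correct, and it establishes the same result as the paper's, but the route you take for the final bound $a \leq b$ is genuinely different. The paper's (very terse) argument reads off the cycle structure directly: since Lemmas~\ref{lem:Xrhor},~\ref{lem:sigmagaizero} force $b_*(\text{image})$ to lie in a class $b'f$, every irreducible component of the image cycle on $X_r$ is supported inside the total transform of a single fiber of $\pi$, and hence has class $f$, $f - E_j$, or $E_j$; writing the cycle class as $m f + m'(f - E_i) + m'' E_i$ with $m,m',m'' \geq 0$ gives $b = m + m'$ and $a = m' - m''$, so $0 \leq a \leq b$ follows immediately (the nonnegativity of $a$ also coming from Theorem~\ref{thm:gwburules}\eqref{thm:gwburules_c}). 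You instead argue by negativity: $\tilde f_i := f - E_i$ is an irreducible $(-1)$-curve, $\tilde f_i \cdot (bf - aE_i) = -a < 0$ forces $\tilde f_i$ to appear with multiplicity $k \geq a$ in any effective representative, and the residual $(b-k)f + (k-a)E_i$ must again decompose into fiber-supported components $f$, $\tilde f_j$, $E_j$ (by nonnegativity of the $c$-coefficient), whence $b - k \geq 0$ and $a \leq k \leq b$. Both arguments are sound. What your version buys is that it makes explicit two effectivity inputs the paper's one-line proof leaves implicit: the inequality $d \geq 0$ (not actually given by Lemma~\ref{lem:Xrhor}, which only yields $d \leq 0$; you supply it by pushing forward the image $1$-cycle to $X$ and using that $f$ is nef on $X$), and the fact that every irreducible component on $X_r$ has nonnegative $c$- and $f$-coefficients. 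The paper's version, once one unpacks the cycle decomposition, is a bit shorter and more direct because it avoids the intermediate reduction to the residual class.
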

\begin{proof}
Lemmas~\ref{lem:Xrhor},~\ref{lem:sigmagaizero} imply that the image of a stable map in the class $(\beta,\alpha)$ composed with the map $b$ is a curve on $ X_{\min}$ in class $b'f$ for some $b'$.  The proper transform of a curve in the class $f$ passing through the blow-up locus  
has class $f-E$. 
This observation and Theorem~\ref{thm:gwburules} implies the statement.
\end{proof}

\begin{proof}[{Proof of Proposition~\ref{prop:ggeq0}}]
Write
\[K(t_p)= \sum_{k=0}^\infty K^{(n)}(0)\frac{t_p^n}{n!} \]
The entries of $K^{(0)}(0)$ at $ij$, $i,j\neq 0,p$, resp. at $0j$ and $ip$, $i \neq 0$, $j \neq p$ enumerate classes that have $n_{\beta,\alpha}=0$, resp. $n_{\beta,\alpha}=1$. Similarly, the entries of $K^{(1)}(0)$ at $ij$, $i,j\neq 0,p$ enumerate classes that have $n_{\beta,\alpha}=1$. By Lemma~\ref{lem:K0pzero}, every other nontrivial entry of $K^{(1)}(0)$ and also every other $K^{(n)}(0)$, $n > 1$ is zero.

By Lemmas~\ref{lem:Xrhor},~\ref{lem:sigmagaizero} and~\ref{lem:sigmagzero} 
combined with the formula for $n_{\beta,\alpha}$, 
the classes with $n_{\beta,\alpha}=0$, resp. $n_{\beta,\alpha}=1$ are of the form $bf-(2b-1)E$, resp. $bf-(2b-2)E$ for one exceptional divisor $E$ such that 
\[0 \leq 2b-1 \leq b,\] 
resp. 
\[ 0 \leq 2b-2 \leq b.\]
This has only one, resp. two solutions: $f-E$, resp. $f$ and $2f-2E$.

\eqref{prop:ggeq0_0f}: 
The class $f$ has $n_f=1$ and hence $q_f$ is a candidate to be minimal at $0f$ and $cp$. Passing through any point of $ X_{\min}$ there is a unique effective stable map whose image has class $f$: the one which maps $\CP1$ to the specific fiber of \[ \pi: X_{\min} \to \Sigma_g.\]
Hence, $N_{f}=1$, and $q_f$ is indeed one of the minimal summands at $0f$ and $cp$. In fact, for $g > 0$ it is the unique minimal summand at these entries by the above description of the effective cone. 

It follows from Part~\eqref{prop:ggeq0_0f} and Theorem~\ref{thm:gwburules}~\eqref{thm:gwburules_d} 
that $N_{f,1}=N_f=1$. 
Parts~\eqref{prop:ggeq0_cf} and~\eqref{prop:ggeq0_cj}  then follow from
Proposition~\ref{prop:Kij}.

 Applying relation~\eqref{eq:Rk} with $i=j=f$ on the class $(2f,2)$ we get that 
 \[ 2^3 \cdot N_{2f,2}=2^2\cdot \overbrace{N_{2f,1}}^{=0}+\overbrace{N_{f,1}}^{=1}\overbrace{N_{f}}^{=1}(-u)\cdot(-1) \binom{1}{1}\]
 which implies
 \[N_{2f,2} = \frac{u}{8}.\]
To get the entries at ${0j}$ and ${jp}$ this needs to be multiplied with \[ \pm T_j'(2f,2)(1+\varepsilon)=\pm 2\cdot2= \pm 4.\]

This implies Part~\eqref{prop:ggeq0_jp}.

 Part~\eqref{prop:ggeq0_jj} follows from Proposition~\ref{prop:Kij}. 
 
 Part~\eqref{prop:ggeq0_0c} follows from Lemmas~\ref{lem:K0pzero} (for $0p$) and \ref{lem:Xrhor} (for entries in 
 the row of $f$ or the column of $c$). Indeed, terms at these entries contain derivatives of 
 the potential with respect to $q_c$, and according to Lemma~\ref{lem:Xrhor} the 
 coefficients of such terms in the potential vanish. 

\end{proof}

\begin{example} 
\label{ex:SigmagK}

Let us use the ordering $0,f,c,p$ of the cohomology classes to express the matrix 
of $K$ for $X_{\min}$ a minimal ruled surface over $\Sigma_g$, $g > 0$. Then 
the minimal terms in $K$ are
\[
\begin{pmatrix}
0 & 2q_f & 0 & 0 \\
2-2g-u& 0 & 0 & 0 \\
2& q_ft_p & 0 & 2q_f \\
 -2 t_p & 2 & 2-2g+u  & 0
\end{pmatrix}
\]
The Newton pairs are therefore
\[ q^2,q\lambda^2,\lambda^4. \]
\end{example}

\begin{example} 
\label{ex:SigmagBUK}
Blowing up once the surface from Example~\ref{ex:SigmagK} 
yields
\[
\begin{pmatrix}
0 & 2q_f & 0 & \frac{u}{2}q_f^{2}q_3^{2} & 0 \\
2-2g-u& 0 & 0& 0 & 0 \\
2& q_ft_p &0  & q_fq_3 & 2q_f  \\
-1& -q_fq_3 &0  & -q_3^{-1} & -\frac{u}{2}q_f^{2}q_3^{2} \\
 -2 t_p & 2 & 2-2g+u &1 & 0
\end{pmatrix}
\]
for the minimal entries of $K$ where we used the ordering $0,f,c,3,p$ with $3$ refering to the first exceptional divisor. 
Hence, for generic $t_p$ the Newton diagram is
\[ q,\lambda^2,q^{-1}\lambda^4,\lambda^5. \]
\end{example}

\begin{lemma}
\label{lem:ruledGWan}
For $X_r$, $r \geq 0$ the genus 0 Gromov-Witten potential as well as the Dubrovin connection is a Laurent polynomial in $q,t$. 
In particular, it is $\C$-analytic.
\end{lemma}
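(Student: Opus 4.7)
My plan is as follows. The strategy is to show that the structural restrictions on non-vanishing genus-zero GW invariants proven in the previous four lemmas force the potential
\[
F(q,t) = \sum_{(\beta,\alpha)} N_{\beta,\alpha}\, q^{\beta} q^{\alpha}\, \frac{t_p^{n_{\beta,\alpha}}}{n_{\beta,\alpha}!}
\]
(summed over $(\beta,\alpha) \neq 0$ with $n_{\beta,\alpha}\geq 0$) to reduce to a finite sum. Once this is established, $F$ is automatically a Laurent polynomial in the variables $q_f, q_c, q_{m+1}, \ldots, q_{m+r}$ and an honest polynomial in $t_p$, and analyticity on the relevant torus follows for free.

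In detail, I would first invoke Lemma~\ref{lem:Xrhor} to restrict attention to classes with $d = 0$, so that $\beta = bf$ and in particular $q_c$ does not appear; then Lemma~\ref{lem:sigmagaizero} to restrict $\alpha$ to have a single non-zero coordinate; then Lemma~\ref{lem:sigmagzero} to get $(\beta,\alpha) = bf - a E_i$ with either $0 \leq a \leq b$ or $(b,a) = (0,-1)$; and finally Lemma~\ref{lem:K0pzero} to force $n_{\beta,\alpha} \in \{0,1\}$. Substituting the explicit formula $n_{\beta,\alpha} = 2b - a - 1$ leaves $a \in \{2b-1,\ 2b-2\}$, which together with $a \leq b$ and the additional $(0,-1)$ case pins down a finite list of surviving classes, namely $f,\ f - E_i,\ 2f - 2 E_i,\ E_i$ for $1 \leq i \leq r$. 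The corresponding invariants $N_{f}, N_{f-E_i}, N_{2f - 2E_i}$ and $N_{0,-[i]}$ have all been computed explicitly in (the proof of) Proposition~\ref{prop:ggeq0} and Theorem~\ref{thm:gwburules}\eqref{thm:gwburules_b}, so plugging them in produces an explicit finite Laurent polynomial for $F$.

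The passage from $F$ to the Dubrovin connection $\nabla$ uses only partial differentiation in $q_i$ and $t_p$, multiplication by $u^{-1}$ or $(uq)^{-1}$, and combination with the constant Poincar\'e pairing matrices from Lemma~\ref{lem:gij}. All of these operations preserve the Laurent-polynomial class, so the conclusion transfers from $F$ to the matrix entries of $\nabla$. Analyticity away from the coordinate hyperplanes $\{q_i = 0\}$ is then immediate. I expect the main subtlety to be purely of bookkeeping nature: verifying that the above case analysis is truly exhaustive — in particular that one has not missed solutions with small $b$ — and recording that the degenerate case $(b,a) = (0,-1)$ contributes only the harmless Laurent monomials $q_{m+i}^{-1}$ at each blow-up.
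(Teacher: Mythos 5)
Your proof is correct and follows the same approach as the paper: the paper's (one-line) proof simply observes that only finitely many homology classes carry nontrivial genus-zero GW invariants on $X_r$ over a positive-genus ruled base, so the potential is a finite sum. You spell out this finiteness by combining Lemmas~\ref{lem:K0pzero}, \ref{lem:Xrhor}, \ref{lem:sigmagaizero}, and \ref{lem:sigmagzero} to enumerate the surviving classes $f$, $f-E_i$, $2f-2E_i$, $E_i$ explicitly — a welcome elaboration of the same argument rather than a different method.
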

\begin{proof}
This follows from the fact that there are only finitely many cohomology classes with nontrivial invariants on $X_r$.
\end{proof}

Proposition~\ref{prop:ggeq0} also implies the following.
\begin{lemma} 
\label{lem:rgeq1ruled}
The degree of the minimal terms of $K$ does not depend on $r$ in the following sense:
\[ \min \deg K_{ij} = \min \deg \overline{K}_{ij}, \quad i,j \in \{0,f,c,3,\dots,r+1,p\}  \]
\end{lemma}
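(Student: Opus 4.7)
The plan is to read the minimal terms of $K$ directly off Proposition~\ref{prop:ggeq0}, applied once to $X_r$ and once to $X_{r-1}$, and to observe that the resulting formulas give identical minimal monomials (and in particular identical minimal degrees) at every entry $(i,j)$ with $i,j\in\{0,f,c,3,\dots,r+1,p\}$. The formulas in items~\eqref{prop:ggeq0_0f}--\eqref{prop:ggeq0_0c} of Proposition~\ref{prop:ggeq0} depend only on the indices $i,j$ and on the numerical data $u,g,t_p$; they make no reference to the total number of blow-ups beyond specifying a range for the exceptional-divisor index. Restricting that range to $3\le j\le r+1$ yields exactly the statement for $\overline{K}$ interpreted as the operator on $X_{r-1}$.

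The verification is then a short entry-by-entry check. For non-exceptional entries, items~\eqref{prop:ggeq0_0f}, \eqref{prop:ggeq0_cf} and \eqref{prop:ggeq0_0c} list the same minimal terms ($2q_f$, $q_f t_p$ and $0$) with no exceptional-index content, so they agree on both sides. For entries involving an exceptional index $j$ with $3\le j\le r+1$, items~\eqref{prop:ggeq0_cj}--\eqref{prop:ggeq0_jj} supply the minimal monomials $q_f q_j$, $\pm\tfrac{u}{2}q_f^{2}q_j^{2}$ and $-q_j^{-1}$, and these formulas are independent of whether there is an additional exceptional at index $r+2$ or not.

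The conceptual point that makes the formulas $r$-independent is that, by Lemma~\ref{lem:sigmagaizero}, no nontrivial Gromov--Witten invariant on $X_r$ can involve strictly positive multiplicities along more than one exceptional divisor. Consequently, classes $(\beta,\alpha)$ with $a_{r+2}>0$ never contribute to an entry $K_{ij}$ with $i,j\ne r+2$; and classes with $a_{r+2}=0$ contribute exactly the same monomials as the corresponding classes on $X_{r-1}$ by Theorem~\ref{thm:gwburules}\eqref{thm:gwburules_a}. These two facts together in fact give the stronger identity $K_{ij}=\overline{K}_{ij}$ for all $(i,j)$ in the stated index range, whence the equality of minimal degrees follows as a formal consequence.

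The main obstacle is purely bookkeeping: one must take care to match the indexing conventions (in particular the convention from Proposition~\ref{prop:Kij} that $\overline{K}_{ij}$ is taken to be $0$ when $i$ or $j$ equals the new exceptional index), and to be precise about the vanishing entries, whose equality of ``minimal degree'' is vacuous. No new geometric or combinatorial input is required beyond Proposition~\ref{prop:ggeq0} and Lemma~\ref{lem:sigmagaizero}.
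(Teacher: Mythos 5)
Your first two paragraphs are exactly the paper's own argument: the paper's ``proof'' of Lemma~\ref{lem:rgeq1ruled} is literally the sentence ``Proposition~\ref{prop:ggeq0} also implies the following,'' and what you do in those paragraphs is spell out that the stated minimal terms depend only on the index pair $(i,j)$, not on the ambient number of exceptional divisors. That part is correct and matches the paper.

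However, your third paragraph overclaims, and the overclaim is false. From Lemma~\ref{lem:sigmagaizero} you conclude that ``classes $(\beta,\alpha)$ with $a_{r+2}>0$ never contribute to an entry $K_{ij}$ with $i,j\ne r+2$'' and hence that $K_{ij}=\overline{K}_{ij}$ identically on the stated index range. Lemma~\ref{lem:sigmagaizero} rules out two \emph{different} positive $a_i$'s simultaneously; it does not rule out $a_{r+2}>0$ with $a_1=\dots=a_{r+1}=0$, and such classes genuinely contribute. Concretely, the class $f-E_r$ (so $\beta=f$ on $X_{r-1}$ and $a=1$ on the new exceptional) has $N_{f,1}=1$, $n_{f,1}=0$, and feeding it into Proposition~\ref{prop:Kij} with $i=c$, $j=f$ gives a nonzero term $q_f\,q_{r+2}$ in $K_{cf}$ that is absent from $\overline{K}_{cf}$. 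So $K_{cf}\neq\overline{K}_{cf}$. What is true — and all the lemma actually asserts — is that such correction terms always have degree $\geq 2$ (since $a>0$ forces $\beta\neq 0$), while the minimal terms of $\overline{K}_{ij}$ already attain degrees $\leq 2$ at every nonzero entry, with strict inequality precisely where the correction term could not lower it further. Where your deduction does hold is for $i$ or $j$ among the \emph{old} exceptional indices $3,\dots,r+1$: there $T'_j(\beta,a)=a_j=0$ whenever $a_{r+2}>0$, so those entries receive no correction at all. The conclusion $\min\deg K_{ij}=\min\deg\overline{K}_{ij}$ is correct, but you should drop the claim of the stronger identity and instead note that the new contributions are of strictly higher order than the minimal terms already present.
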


\begin{proof}[Proof of Theorem~\ref{mainthm1} in the higher genus case]
A minimal monomial of $\chi_{X_r}$ factors either as
\[ \hat{m}_{r-1}(q,\lambda)m_{ie}m_{ej} \]
where $\hat{m}_{r-1}(q,\lambda)$ is a monomial from $\chi_{X_{r-1}}$ divided by a summand of $(K-\lambda I)_{ij}$ for some $i,j \neq e$ and $m_{ie}$, $m_{ej}$ are summands of $K_{ie}$, $K_{ej}$ respectively, or as
\[ m_{r-1}(q,\lambda)m_{ee} \]
where 
$m_{r-1}(q,\lambda)$ is a monomial from $\chi_{X_{r-1}}$ and $m_{ee}$ is a monomial from $K_{ee}-\lambda$.
Lemma~\ref{lem:rgeq1ruled} and Proposition~\ref{prop:ggeq0} implies
\begin{equation} 
\label{eq:mindegineqruled}
\min \deg\, m_{ie}m_{ej}  \geq \min \deg\,(K-\lambda I)_{ij} \quad \textrm{and}\quad \min \deg\, m_{ee} = -1, \end{equation}
which gives the second statement of Theorem~\ref{mainthm1}.

The monomial $\lambda^{4+r}$ is always the unique summand of $\chi_{X_r}$ corresponding to the lattice point $(4+r,0)$. It follows from Proposition~\ref{prop:ggeq0} that the monomials corresponding to the lattice points $(0,-r+2)$, resp. $(4,-r)$ are also unique: they are
\[ 4((2-2g)^2-u^2)q_f^2\prod_{i=3}^{r+2}q_{i}^{-1},\] resp. \[\lambda^4\prod_{i=3}^{r+2}q_{i}^{-1}.\]
As these three vertices are the unique salient vertices of the Newton polygon, 
this completes the proof (of the third statement) of Theorem~\ref{mainthm1}.
\end{proof}

\subsubsection{Genus zero}

It is known that $ X_{\min} = \mathbb{P}(U)$ is deformation-equivalent to $\mathbb{P}(U(n))$ for any twist 
\[U(n) = U\otimes \mathcal{O}(n), n \in \mathbb{Z}\] of $U$, and such a twist changes the degree of $U$ by $2n$ \cite[p. 9–10]{namba1979families}.
Hence, $ X_{\min}$ is deformation-equivalent to a projective bundle of degree zero if $u$ is even, or to a projective bundle of degree $-1$ if $u$ is odd. When $u=-1$, $ X_{\min}$ is isomorphic to $\CP2$ blown up at a point. We will treat this case in Section~\ref{subsec:XP2}.

Suppose now that $u = 0$. In this case 
\[  X_{\min}\simeq \CP1 \times \CP1\] 
and hence 
\[c_1( X_{\min})=2f+2c.\]  
The intersection form on $X_r$ is
\begin{equation*} 
g_{ij}=g^{ij}=\begin{cases}
1 & \textrm{ if } (i,j) \in \{(0,p),(p,0),(f,c),(c,f)\}, \\
-1 & \textrm{ if } (i,j) \in \{(3,3),\dots,(r+2,r+2)\}, \\
0 & \textrm{ otherwise.}
\end{cases} \end{equation*}

\begin{example}
Consider $X_{\min}\simeq \CP1 \times \CP1$. The fact that \[N_{f+c}= 1\] 
is known classically \cite[Chapter~3, Exercise~3]{kock2007invitation} (see also \cite[Example~7.2]{crauder1995quantum}). For this class,
\[ n_{f+c}=2+2-1=3 .\]
At $0p$ its coefficient is 
\[ 1 - n_{f+c} +2\varepsilon = 1-3+4=2.\]
As multiple covers of the class $f$ or $c$ do not pass through enough general points,
\[N_{kf}=N_{kc}=0, \quad \textrm{when } k>1.\]
The minimal entries of $K$ are hence 
\[
\begin{pmatrix}
0 & 2q_f & 2q_c & 2q_fq_ct_p \\
2& -q_fq_c\frac{t_p^3}{3} & q_ct_p & 2q_c \\
2& q_ft_p & -q_fq_c\frac{t_p^3}{3} & 2q_f \\
 -2 t_p & 2 & 2  & 0
\end{pmatrix}
\]
The Newton pairs are
\[ q^2,q\lambda^2,\lambda^4. \]
\end{example}

\begin{example} 
Consider now $X_1$ which is obtained by blowing up one point on $\CP1 \times \CP1$. 
As $n_{f+c}=3$ we can use Theorem~\ref{thm:gwburules}~\eqref{thm:gwburules_d} to get that \[N_{f+c,1}= 1.\] This class has $n_{f+c,1}= 2$. Similarly, $N_{f,1}$ and $N_{c,1}$ are both equal to 1. We get that the minimal entries of $K$ are
\[
\begin{pmatrix}
0 & 2q_f & 2q_c& 2q_fq_cq_3t_p & 2q_fq_ct_p \\
2 & -q_fq_c\frac{t_p^3}{3}& q_ct_p & q_cq_3 & 2q_c \\
2& q_ft_p & -q_fq_c\frac{t_p^3}{3} & q_fq_3 & 2q_f \\
-1& q_fq_3 & q_cq_3& -q_3^{-1} & -2q_fq_cq_3t_p \\
 -2 t_p & 2 & 2  & 1 & 0
\end{pmatrix}
\]
Let $\omega_1 \colon X_1 \to \CP1 \times \CP1$ be the blow-down map. 
It is known that $X_1$ is isomorphic to the projective plane blown-up at two generic points $P$ and $Q$ \cite[pp.~479-480]{griffiths1978principles}; this gives another blow-down map $\omega_2\colon X_1 \to \CP2$. 
The exceptional curve $E$ of $\omega_1$ then gets identified with the proper transform $\tilde{H}$ of the line $\overline{PQ}$ under $\omega_2$. 
Moreover, for the base $B$ and fiber $F$ in $\CP1 \times \CP1$, we have the relations 
\[
    \omega_1^* F = E_1 + E, \quad \omega_1^* B = E_2 + E. 
\]
On the other hand, we have 
\[
    \omega_2^* H = \tilde{H} +  E_1 +  E_2. 
\] 
It can then be directly checked using these formulae that performing the base change $(f,c,e)\to (h, e_1, e_2)$ in $H^2(X_1)$ brings the above matrix to the one given in Example~\ref{ex:p2ex2}.
\end{example}

Applying repeatedly Theorem~\ref{thm:gwburules}~\eqref{thm:gwburules_d} one can obtain an analogue of of Proposition~\ref{prop:ggeq0} for the blow-ups $X_r$ of $ X_{\min}$. Instead, we will just use the above identification of $X_1$ with the projective plane blown-up at two generic points to reduce the calculation of $K$ to that of the blow-ups of the projective plane. This will be done Section~\ref{subsec:XP2} below.

\subsection{Case 3: $X_{\min} \simeq \CP2$}
\label{subsec:XP2}

The second cohomology of $\CP2$ is generated by the hyperplane class $h$, which has $h^2=1$. The first Chern class is \[c_1( X_{\min})=3h.\] 
When considering this minimal surface, we will write $q_h$ for $q_1$.
 As is known, for $X_r$
\begin{equation*} 
g_{ij}=g^{ij}=\begin{cases}
1 & \textrm{ if } (i,j) \in \{(0,p),(p,0),(h,h)\}, \\
-1 & \textrm{ if } (i,j) \in \{(2,2),\dots,(r+1,r+1)\}, \\
0 & \textrm{ otherwise.}
\end{cases} \end{equation*}

\begin{lemma}
\label{lem:p2min}
The lowest order terms of $K$ on $X_r$, the projective plane blown-up at $r$ points are as follows.
\begin{enumerate}
\item $-q_h\frac{t_p^2}{2}$ at $hh$
\item $q_ht_p$ at $0h$ and $hp$
\item $\pm q_hq_i\left(q_h\frac{t_p^4}{12}+\sum_{\substack{2 \leq j \leq r+1 \\ j \neq i}} q_j\right)$ at $hi$ and $ih$, $2 \leq i \leq r+1$
\item $-q_hq_iq_j$ at $ij$, $2 \leq i,j \leq r+1$
\item $-q_{i}^{-1}$ at $ii$, $2 \leq i \leq r+1$
\item $\pm 2q_hq_i$ at $0i$ and $ip$, $2 \leq i \leq r+1$
\item $3q_h$ at $0p$
\end{enumerate}
\end{lemma}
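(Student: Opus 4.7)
The plan is to compute each entry $K_{ij}$ on $X_r$ by applying Proposition~\ref{prop:Kij} (iterated over the $r$ successive blow-ups) after identifying the short list of homology classes that contribute at the minimal $q$-degree. In the $\CP{2}$ case only proper transforms of lines and conics turn out to matter, giving the following enumeration of relevant classes $(\beta,\alpha)$ with $N_{\beta,\alpha}\neq 0$: the exceptional classes $-E_i$ (with $N=1$, responsible for the $-q_i^{-1}$ diagonal entry of (5) via the explicit $-q_e^{-1}\delta_{ie}\delta_{je}$ correction term in Proposition~\ref{prop:Kij}); the lines $h$, $h-E_i$, $h-E_i-E_j$ (for $i\neq j$); and the conics $2h-E_i$, each having $N=1$. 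The line-class invariants follow from the classical $N_h=1$ on $\CP{2}$ by iterated use of Theorem~\ref{thm:gwburules}~\eqref{thm:gwburules_d}, and the conic invariant follows similarly from $N_{2h}=1$. A dimension count using $n_{\beta,\alpha}\geq 0$ together with Theorem~\ref{thm:gwburules}~\eqref{thm:gwburules_c} shows that every other class contributes only at strictly higher $q$-order and so may be discarded.

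Given this list, items (1), (2) and (7) come only from $\beta=h$ on the base and match the corresponding entries of $\overline{K}$ on $\CP{2}$; item (4) comes from $h-E_i-E_j$; item (6) from $h-E_i$. Each is a direct substitution into Proposition~\ref{prop:Kij}, tracking the sign of $\sum_k g^{ki}T'_k(\beta,\alpha)$ (positive for rows $0$, $h$, $p$ since $g^{hh}=g^{0p}=1$, and negative for exceptional rows since $g^{ii}=-1$), the value of $T'_j(\beta,\alpha)$, and the weight $(1-n_{\beta,\alpha}+2\varepsilon)$. Lemma~\ref{lem:excperm} reduces the general $r$ case to a computation for a single representative exceptional index.

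The main obstacle is item (3). The natural candidate class $h-E_i$ does \emph{not} contribute to $K_{hi}$ because its weight $(1-n_{h-E_i})=0$ kills it. The leading term therefore arises from two sources one order higher in $q$: from each class $h-E_i-E_j$ with $j\neq i$ one gets a $\pm q_hq_iq_j$ contribution, and from the conic $2h-E_i$ (with $n=4$, $T_h(2h)=2$, $N=1$) one gets a $\pm q_h^2q_it_p^4$ contribution whose numerical coefficient arises from the factor $(1-n)/n!$ together with $T_h(2h)=2$. Summing these and factoring out $\pm q_hq_i$ one reads off the bracket in the statement. The opposite signs in the pairs $K_{hi}$ vs.\ $K_{ih}$ and $K_{0i}$ vs.\ $K_{ip}$ reflect the sign difference $g^{hh}=g^{0p}=1$ versus $g^{ii}=-1$ in the factor $\sum_k g^{ki}T'_k$ of Proposition~\ref{prop:Kij}.
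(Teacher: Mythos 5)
Your approach is essentially the same as the paper's: identify the low-degree classes (exceptional curves, lines $h-E_i$, $h-E_i-E_j$, conics $2h-E_i$), compute their invariants via Theorem~\ref{thm:gwburules}, note the weight $(1-n+2\varepsilon)$ kills $h-E_i$ at the $hi$ and $ih$ entries, and substitute into Proposition~\ref{prop:Kij}. That is precisely what the paper does.

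One point where you diverge and are in fact more careful: you write $N_{2h}=1$ (the classical count of rational conics through five general points), whereas the paper's proof asserts $N_{2h}=2$. The standard recursion $N_1=1$, $N_2=1$, $N_3=12,\ldots$ confirms your value. Be aware, however, that if you actually carry out the numerical coefficient in item (3) along the lines you sketch, you do \emph{not} get $q_h\tfrac{t_p^4}{12}$: with $N_{2h,(1)}=1$, $n=4$, $T_h(2h)=2$, $T'_i=1$, the weight $(1-n)=-3$ and denominator $4!=24$ give $1\cdot 2\cdot 1\cdot(-3)/24=-\tfrac14$, i.e.\ the term $-q_h^2q_i\tfrac{t_p^4}{4}$. (A direct computation from $3F_{h2h}-F_{22h}-2t_pF_{p2h}$ on $X_1$ gives the same $-\tfrac14$.) Neither your sketch nor the paper's proof verifies this constant, and the lemma's stated $\tfrac{t_p^4}{12}$ appears to be an error. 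This does not affect the downstream argument, since only the $q$-degree (not the nonzero numerical coefficient) enters the Newton-polygon analysis, but it should be flagged: a reader following your plan literally would hit a mismatch with item (3) and would need to conclude that the stated constant, not the method, is wrong.
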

\begin{proof}
Theorem~\ref{thm:gwburules}, which in this case goes back to \cite{gottsche1998quantum}, gives a straightforward way to calculate the lowest order terms of the operator $K$ for any $X_r$, $r \geq 0$.
One can start from the numbers $N_h=1$ and $n_{h}=3-1=2$.
Then we get that 
\[ N_{h,(1)}=1, \quad n_{h,(1)}=1 \]
and
\[ N_{h,(1,1)}=1, \quad n_{h,(1,1)}=0. \]
But the class $h,(1)$ cannot appear at the entries $ij$, $1\leq i,j \leq m+r$ because its coefficient there would be
\[ 1-n_{h,(1)}+2\varepsilon=1-1+0=0\]
The invariant $N_{2h}=2$, for which $n_{2h}=6-1=5$,
implies that 
\[ N_{2h,(1)}=2, \quad n_{2h,(1)}=4 \]
\end{proof}

\begin{example}
\label{ex:p2ex0}
The minimal entries of $K$ for $X_{\min}$ are
\[
    \begin{pmatrix}
     0 & q_ht_p & 3 q_h \\
     3 & -q_h\frac{t_p^2}{2} & q_ht_p  \\
      -2 t_p & 3 & 0
    \end{pmatrix}
\]
The Newton polygon is bounded by 
\[q, \lambda^3.\]
\end{example}

\begin{example} 
\label{ex:p2ex1}
The minimal entries of $K$ for $X_1$, the projective plane blown-up in a point are
\[
    \begin{pmatrix}
     0 & q_h t_p  + 2q_hq_2  & 2 q_h q_2 & 3 q_h \\
     3 & -q_h\frac{t_p^2}{2} & q_h^2q_2\frac{t_p^4}{12} &  q_h t_p  + 2q_hq_2 \\
     -1 & -q_h^2q_2\frac{t_p^4}{12} & -q_2^{-1} & -2 q_h q_2 \\
      -2 t_p & 3 & 1 & 0
    \end{pmatrix}
\]
At the entries $0h$ and $hp$ we have also shown the terms that become leading upon setting $t_p = 0$. This will be relevant in the comparison with small quantum cohomology; see Remark~\ref{remark:small_qh} below.
\end{example}

\begin{example} 
\label{ex:p2ex2}
The minimal entries of $K$ for $X_2$, the projective plane blown-up in two generic points are
\[
    \begin{pmatrix}
     0 &  q_h t_p  + 2q_h(q_2 + q_3) & 2 q_h q_2 &2 q_h q_3 &  3 q_h \\
     3 & -q_h\frac{t_p^2}{2} +q_hq_2q_3 & q_hq_2\left(q_h\frac{t_p^4}{12}+q_3\right) & q_hq_3\left(q_h\frac{t_p^4}{12}+q_2\right) & q_ht_p  + 2q_h(q_2 + q_3)\\
     -1 & -q_hq_2\left(q_h\frac{t_p^4}{12}+q_3\right) & -q_2^{-1} -q_hq_2q_3& -q_hq_2q_3  & - 2 q_h q_2 \\
     -1 & -q_hq_3\left(q_h\frac{t_p^4}{12}+q_2\right)& -q_hq_2q_3 & -q_3^{-1}  -q_hq_2q_3& - 2 q_h q_3 \\
      -2 t_p & 3 & 1& 1 & 0
    \end{pmatrix}
\]
Again, we have also shown the terms that become leading upon setting $t_p = 0$.
\end{example}

\begin{remark}\label{remark:small_qh} The above operators, when $t_p=0$, become the associated matrix of the linear operator $c_1(T_{X_r})$ on the small quantum cohomology of the base space. 
\begin{enumerate}
\item With this specialization, the eigenvalues of our matrices are in a bijection with critical values of the superpotential mirror to $X_r$ in the cases when $X_r$ is del Pezzo, that is, when $r \leq 8$. These critical values were computed in~\cite[Section~3.3]{auroux2006mirror} for $\CP2$ and in~\cite[Example~2.3]{jerby2017exceptional} for all toric del Pezzo surfaces. For example, in Example~\ref{ex:p2ex1} we have three eigenvalues which are multiples of each other by a primitive third root of unity (this comes from the spectrum of $\mathbb{P}^2$), and one additional eigenvalue close to infinity (the spectrum coming from the center of the blow-up). 
\item  In particular, the operator from Example~\ref{ex:p2ex1} at this specialisation equals
\[
	\begin{pmatrix}
		0 & 2q_hq_2 & 2q_hq_2& 3q_h \\
		3 & 0 & 0 &2q_hq_2 \\
		-1& 0 & -q_2^{-1} & -2q_hq_2 \\
		0 & 3 & 1 & 0
	\end{pmatrix}.
	\]
After a change of basis in the cohomology $H^{\ast}(X_1)$, this coincides with results that appeared earlier \cite[Example~7.3]{crauder1995quantum}.
\item Similarly, by further plugging $q_i=1$ for all $i$, the operator from Example~\ref{ex:p2ex2} specialises to the one in \cite[Example~2.5]{hu2021gamma}.
\item All the remaining terms in the above examples contain $t_p$. So they do not affect the results when
we restrict to the small quantum cohomology by letting $t_p = 0$.
\end{enumerate}
\end{remark}

Similarly as in the ruled surface case, Lemma~\ref{lem:p2min} implies the following.
\begin{lemma} 
\label{lem:gneq1rgeq2}
The degree of the minimal terms of $K$ does not depend on $r$ in the following sense:
\[ \min \deg K_{ij} = \min \deg \overline{K}_{ij}, \quad i,j \in \{0,f,c,3,\dots,r+1,p\}.\]
\end{lemma}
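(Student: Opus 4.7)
The plan is to deduce the lemma directly from Lemma~\ref{lem:p2min}, proceeding in parallel with the ruled-surface case (Lemma~\ref{lem:rgeq1ruled}). Lemma~\ref{lem:p2min} lists the minimal term of every entry of $K$ on $X_r$, so it suffices to verify case-by-case that for indices $i,j$ shared between $X_r$ and $X_{r-1}$, the total $q$-degree of that minimal term is independent of $r$. The entries indexed only by $\{0,h,p\}$ have minimal terms $-q_h t_p^2/2$, $q_h t_p$ and $3q_h$, all manifestly $r$-independent; the diagonal exceptional entries $K_{ii}$ have minimal term $-q_i^{-1}$ of degree $-1$; and the mixed entries $K_{0i},K_{ip},K_{hi},K_{ih},K_{ij}$ have minimal $q$-degrees $2,2,3,3,3$ respectively. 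Although the expression for $K_{hi}$ in Lemma~\ref{lem:p2min} contains a sum $\sum_{j\neq i}q_j$ whose number of summands grows with $r$, each summand has the same degree one, so the overall minimal degree three is unchanged.

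A more structural (and more robust) argument that avoids relying on the explicit formulas uses Theorem~\ref{thm:gwburules} directly. By part~\eqref{thm:gwburules_a}, every class $(\beta,\alpha)$ contributing to $\overline{K}_{ij}$ extends to a class $(\beta,(\alpha,0))$ on $X_r$ with the same Gromov--Witten invariant, contributing a term to $K_{ij}$ of the same $q$-degree; this gives $\min\deg K_{ij} \leq \min\deg \overline{K}_{ij}$. Conversely, any genuinely new term of $K_{ij}$ must arise from a class $(\beta,(\alpha,a_{r+1}))$ with $a_{r+1}\neq 0$. By part~\eqref{thm:gwburules_c}, the case $a_{r+1}<0$ is restricted to the single class $(0,-[r+1])$, which by Proposition~\ref{prop:Kij} contributes only to the diagonal entry $K_{r+1,r+1}$ and not to any shared entry. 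Hence $a_{r+1}>0$, and the new summand carries a factor $q_{r+1}^{a_{r+1}}$ of strictly positive degree, increasing the total degree. Combining the two inequalities yields the required equality.

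The main (mild) obstacle in either route is isolating the contribution of the negative class $(0,-[r+1])$, since it is the unique source of negative-degree summands and must be confirmed to affect only $K_{r+1,r+1}$. This is transparent from the isolated term $-q_e^{-1}\delta_{ie}\delta_{je}$ in Proposition~\ref{prop:Kij}; once this is in place, the rest is a routine degree count, and the analogue of the deduction of Theorem~\ref{mainthm1} carried out in the higher-genus ruled case then goes through verbatim in the $\CP2$ case as well.
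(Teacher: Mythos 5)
Your first route — a case-by-case reading of Lemma~\ref{lem:p2min} — is exactly what the paper does; the paper's ``proof'' is just the sentence ``Similarly as in the ruled surface case, Lemma~\ref{lem:p2min} implies the following,'' and your tabulation of the minimal degrees ($1$ for entries among $\{0,h,p\}$, $-1$ at $ii$, $2$ at $0i,ip$, $3$ at $hi,ih,ij$) correctly verifies that none of them depends on $r$, including the observation that the growing sum $\sum_{j\neq i}q_j$ does not change the minimum.

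Your second, ``structural'' route has a gap, and the obstacle you single out (isolating $(0,-[r+1])$) is not the real one. The inequality $\min\deg K_{ij}\leq\min\deg\overline{K}_{ij}$ is fine: the $a_{r+1}=0$ part of $K_{ij}$ is literally $\overline{K}_{ij}$ by Proposition~\ref{prop:Kij}, and higher-$a_{r+1}$ terms cannot cancel it. The problem is the reverse inequality. You argue that a new term from $(\beta,(\alpha,a_{r+1}))$ with $a_{r+1}>0$ has degree $\deg(q^\beta q^\alpha)+a_{r+1}$, which is ``higher,'' but higher than what? You implicitly compare against $\deg(q^\beta q^\alpha)$ and assume this is $\geq\min\deg\overline{K}_{ij}$ because $(\beta,\alpha)$ contributes to $\overline{K}_{ij}$. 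That need not be the case. In the formula of Proposition~\ref{prop:Kij} the coefficient contains the scalar factor $(1-n_{\beta,\alpha}+2\varepsilon)$, which can vanish for $(\beta,\alpha)$ even though $N_{\beta,\alpha}\neq 0$, while the corresponding factor $(1-n_{\beta,(\alpha,a_{r+1})}+2\varepsilon)=(1-n_{\beta,\alpha}+a_{r+1}+2\varepsilon)$ is nonzero. For instance, with $i=j=h$ so $\varepsilon=0$, any class with $n_{\beta,\alpha}=1$ drops out of $\overline{K}_{hh}$ but $(\beta,(\alpha,1))$ survives in $K_{hh}$. So $(\beta,\alpha)$ may be absent from $\overline{K}_{ij}$, and the comparison point of your degree count is missing. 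One can check (by hand, using the constraint $n_{\beta,\alpha}=1+2\varepsilon$ together with the form of the classes) that the needed inequality still holds, but that verification again amounts to invoking the explicit minimal degrees of Lemma~\ref{lem:p2min}. So as written the second route is not genuinely independent of Lemma~\ref{lem:p2min}, and the phrase ``routine degree count'' understates what is required.

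Finally, a small matter of statement: the index set in Lemma~\ref{lem:gneq1rgeq2} should read $\{0,h,2,\dots,r+1,p\}$ (the $\CP2$ labelling), not $\{0,f,c,3,\dots,r+1,p\}$, which was copied from the ruled case; you implicitly use the correct labelling, but it is worth flagging.
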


\begin{proof}[Proof of Theorem~\ref{mainthm1} for rational surfaces]
Using Lemmas~\ref{lem:p2min} and~\ref{lem:gneq1rgeq2} the proof is the similar as it was in the ruled surface case. In particular, the inequality \eqref{eq:mindegineqruled} is satisfied in this case as well. The monomial $\lambda^{3+r}$ is always the unique summand of $\chi_{X_r}$ corresponding to the lattice point $(3+r,0)$. Again, the monomials corresponding to the lattice points $(0,-r+1)$, resp. $(3,-r)$ are also unique: they are
\[ 27q_h\prod_{i=2}^{r+1}q_{i}^{-1},\] resp. \[\lambda^3\prod_{i=2}^{r+1}q_{i}^{-1}.\]
These three vertices hence bound the Newton polygon
and this completes the proof of Theorem~\ref{mainthm1}.
\end{proof}

\section{The spectrum of the blow-up}
\label{sec:spectrum}

First, let us recall the following version of Hensel's Lemma for monic polynomials 
with coefficients in the ring of formal power series in one variable.
\begin{theorem}[{Hensel's Lemma~\cite[Lecture 12]{abhyankar1990algebraic}}]\label{thm:Hensel} Let 
\[ F(x,y)=y^n+A_1(x)y^{n-1}+\dots+A_n(x) \in \CC\llbracket x \rrbracket[y] \]
be a monic polynomial of degree $> 0$ in $y$ with coefficients $A_1(x),\dots,A_n(x) \in \CC\llbracket x \rrbracket$. Assume that \[F(0,y)=g(y)h(y)\] where
\[
g(y)=y^r+b_1y^{r-1}+\dots+b_r \in \CC[y],
\]
\[
h(y)=y^s+c_1y^{r-1}+\dots+c_s \in \CC[y]
\]
are monic polynomials of degrees $r >0$ and $s>0$ in $y$ 
such that $\operatorname{g.c.d.}(g(y),h(y))=1$. 
Then there exist unique monic polynomials
\[
G(x,y)=y^r+B_1(x)y^{r-1}+\dots+B_r(x) \in \CC\llbracket x \rrbracket[y],
\]
\[
H(x,y)=y^s+C_1(x)y^{r-1}+\dots+C_s(x) \in \CC\llbracket x \rrbracket[y]
\]
 of degrees $r >0$ and $s>0$ in $y$ 
 such that \[G(0,y)=g(y), \quad  H(0,y)=h(y)\] and \[F(x,y)=G(x,y)H(x,y).\]
\end{theorem}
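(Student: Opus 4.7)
The plan is to construct the factorization $F = GH$ by $x$-adic successive approximation, starting from the factorization modulo $x$ and lifting one power at a time. The engine driving each lift will be the classical B\'ezout/resultant statement for polynomials in one variable, applied in the degree-bounded form: the $\CC$-linear map $(U,V) \mapsto Uh + Vg$ from $\CC[y]_{<r} \oplus \CC[y]_{<s}$ to $\CC[y]_{<r+s}$ is an isomorphism, since $\gcd(g,h) = 1$ in the PID $\CC[y]$ and the matrix of this map in the standard monomial basis is the Sylvester matrix of $g,h$ with determinant $\operatorname{Res}(g,h) \neq 0$.

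The induction runs as follows. Set $G^{(0)}(y) = g(y)$ and $H^{(0)}(y) = h(y)$; by hypothesis, $F \equiv G^{(0)} H^{(0)} \pmod{x}$. Assume inductively that I have monic polynomials $G^{(n)}, H^{(n)} \in \CC[[x]][y]$ of $y$-degrees $r$ and $s$ respectively, with $G^{(n)}(0,y) = g(y)$, $H^{(n)}(0,y) = h(y)$, and $F \equiv G^{(n)} H^{(n)} \pmod{x^{n+1}}$. Both $F$ and $G^{(n)} H^{(n)}$ are monic of $y$-degree $r+s$, so their difference has $y$-degree at most $r+s-1$, and I can write
\[F - G^{(n)} H^{(n)} = x^{n+1} R(x,y)\]
with $R(0,y) \in \CC[y]_{<r+s}$. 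I search for $U \in \CC[y]_{<r}$ and $V \in \CC[y]_{<s}$ such that the updates $G^{(n+1)} = G^{(n)} + x^{n+1}U$ and $H^{(n+1)} = H^{(n)} + x^{n+1}V$ satisfy the next congruence modulo $x^{n+2}$; expanding the product, this reduces to the single equation $U(y) h(y) + V(y) g(y) = R(0,y)$ in $\CC[y]$, which is solved uniquely by the B\'ezout isomorphism above. Taking $x$-adic limits of the Cauchy sequences $(G^{(n)}), (H^{(n)})$ in $\CC[[x]][y]$ yields $G, H$ with $F = GH$ and the correct initial values.

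Uniqueness is proved by the same mechanism. If $G_1 H_1 = G_2 H_2 = F$ with both pairs monic of the prescribed $y$-degrees and the correct reductions mod $x$, and if $G_1 \equiv G_2$, $H_1 \equiv H_2 \pmod{x^{n+1}}$, then writing $G_2 - G_1 \equiv x^{n+1} U'$ and $H_2 - H_1 \equiv x^{n+1} V' \pmod{x^{n+2}}$ with $U' \in \CC[y]_{<r}$, $V' \in \CC[y]_{<s}$, and extracting the coefficient of $x^{n+1}$ from $G_1 H_1 = G_2 H_2$, one obtains $U' h + V' g = 0$. The injectivity of the B\'ezout map forces $U' = V' = 0$, so the congruence upgrades to mod $x^{n+2}$; inducting on $n$ gives $G_1 = G_2$, $H_1 = H_2$.

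There is no serious obstacle, as this is essentially the textbook proof. The one place that demands care is the degree bookkeeping: it is the strict inequality $\deg_y(F - G^{(n)} H^{(n)}) < r+s$, together with the fact that the B\'ezout map is an isomorphism onto precisely $\CC[y]_{<r+s}$, that keeps the iteration from drifting into higher $y$-degree and allows it to close up into a genuine factorization inside $\CC[[x]][y]$ rather than in a larger ring. Everything else — the monicity of $G^{(n)}, H^{(n)}$, the convergence of the $x$-adic Cauchy sequences, and the preservation of the prescribed initial values — is automatic from the way the updates are designed.
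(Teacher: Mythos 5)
The paper does not prove this lemma: it is quoted as a cited result (with the reference to Abhyankar's lectures), so there is no in-paper argument to compare your proof against. Your argument is the standard $x$-adic Newton lifting proof --- at each step you solve $U h + V g = R(0,y)$ via the isomorphism $(U,V)\mapsto Uh+Vg$ from $\CC[y]_{<r}\oplus\CC[y]_{<s}$ onto $\CC[y]_{<r+s}$ (valid exactly because $\gcd(g,h)=1$ makes the Sylvester matrix invertible), pass to the $x$-adic limit for existence, and reuse the injectivity of the same map for uniqueness. This is correct, and the degree bookkeeping you flag ($\deg_y(F-G^{(n)}H^{(n)})<r+s$, $U\in\CC[y]_{<r}$, $V\in\CC[y]_{<s}$) is indeed precisely what keeps the iterates monic of the prescribed $y$-degrees and prevents the approximation from escaping $\CC[[x]][y]$.
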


We will apply this on the characteristic polynomial of the operator $K$ to infer about the quantum spectrum of $X_r$.
As above, let 
$$
b \colon X_r\to X_{\operatorname{min}}
$$
be a birational morphism of regular surfaces over $\mathbb{C}$, with $X_{\operatorname{min}}$ minimal. 
Recall that we fixed a basis $T_1, \ldots , T_m$ of $H^2(X_{\operatorname{min}}, \mathbb{Z})$ and that these correspond to
parameters $q_1,  \ldots , q_m \in \mathbb{C}$. 
The morphism $b$ is obtained by $r$ successive quadratic transformations (blow-ups), 
with exceptional divisors denoted by $E_1, \ldots , E_r$; the variables $q_{m+1}, \ldots , q_{m+r}$ are associated to these divisors. 

\begin{lemma} With the notations of Proposition~\ref{prop:Kij},
\label{lem:conv}
\[ \lim \sum_{\beta,a}k_{\beta,a}q^{\beta}q_e^{a}\frac{t_p^{n_{\beta,a}-\varepsilon}}{(n_{\beta,a}-\varepsilon)!} = 0 \]
as  $|t_p| \ll \infty$ and $(q_1,\dots,q_{m+r})$ converges to $\vec{0}$. 
\end{lemma}
\begin{proof}
We know that the Gromov-Witten potential is analytic in the variables $q,t$ by Lemma~\ref{lem:ruledGWan} and Theorem~\ref{thm:appmain} on a neighbourhood of 0 not containing the hyperplanes defined by the exceptional variables. The series on the LHS is a third derivative of the potential and it only contains nonnegative exponents of all the variables. Hence, it is in fact  analytic on an open neighbourhood of 0. Therefore, it is uniformly continuous and we can exchange the limit with the summation. As each summand converges to $0$, we obtain the claim.
\end{proof}

Let us denote by 
\[
    \lambda_0 (q_1,  \ldots , q_m,t_p) , \ldots , \lambda_{m+1} (q_1,  \ldots , q_m,t_p)
\]
the spectrum of the operator $K$ of $X_{\operatorname{min}}$, and by 
\[
    \mu_0 (q_1,  \ldots , q_{m+r},t_p) , \ldots , \mu_{m+r+1} (q_1,  \ldots , q_{m+r},t_p)
\]
the spectrum of the operator $K$ of $X_r$. 
Let \[U \subset \operatorname{Spec} (\mathbb{C} [q_1, \ldots , q_{m+r}] )\] be 
any conical open subset for the analytic topology whose closure $\overline{U}$ satisfies 
\[
    \overline{U} \cap \{ q_j = 0 \} = \{ \vec{0} \}, \quad 1 \leq j \leq m+r
\]
We will assume that $t_p$ is such that the $q,t$ lies in the domain of convergence of $F_{X_r}$ (see Proposition~\ref{prop:P2domain} for the plane, in all other cases it is trivial).
\begin{theorem}\label{thm:main}
With the above notations, (and up to a suitable relabeling of the spectra), we have 
\[
    \lim \frac{\lambda_j (q_1,  \ldots , q_m,t_p)}{\mu_j (q_1,  \ldots , q_{m+r},t_p)} = 1 
    \quad (0 \leq j \leq m+1)
\]
and 
\[
    \lim \mu_j (q_1,  \ldots , q_{m+r},t_p) q_{j-1} = 1 \quad (m+2 \leq j \leq m+r+1)
\]
as  $|t_p| \ll \infty$ and the point $(q_1, \ldots , q_{m+r})$ converges to $\vec{0}$ in $U$. 
\end{theorem}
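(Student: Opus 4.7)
The plan is to combine the Newton polygon description of Theorem~\ref{mainthm1} with Hensel's Lemma in order to factor the characteristic polynomial $\chi_{X_r}(\lambda)$ into a finite-spectrum piece of degree $m+2$, whose roots track the spectrum of $K_{\min}$, and a divergent piece of degree $r$, whose roots scale as $q^{-1}$. Since $U$ is conical and its closure meets each coordinate hyperplane only at the origin, it suffices to establish both limits along every ray $q_i = \nu_i q$ with fixed direction $[\nu_1 : \cdots : \nu_{m+r}] \in \CP{m+r-1}$ and all $\nu_i \neq 0$. Under this restriction, $\chi_{X_r}(\lambda)$ becomes a monic polynomial of degree $m+r+2$ with coefficients in $\CC\{q, q^{-1}, t_p\}$ whose Newton polygon has the precise shape given by Theorem~\ref{mainthm1}.

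To isolate the divergent roots, introduce $\tilde\lambda = q\lambda$ and set
\[
\tilde\chi(\tilde\lambda, q) := q^{m+r+2}\,\chi_{X_r}\!\left(\tfrac{\tilde\lambda}{q}\right).
\]
The transformation $(x, y) \mapsto (x, y + m + r + 2 - x)$ on Newton pairs carries the slope-$1$ segment of $\chi_{X_r}$ from $(m+2, -r)$ to $(m+r+2, 0)$ onto the horizontal segment $y = 0$ over $x \in [m+2, m+r+2]$, while lifting all remaining Newton pairs strictly above $y = 0$. Hence $\tilde\chi \in \CC\{q, t_p\}[\tilde\lambda]$ is monic of degree $m+r+2$, and $\tilde\chi(\tilde\lambda, 0)$ is supported exactly on the lattice points of the horizontal segment. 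Using Propositions~\ref{prop:Kij}, \ref{prop:c1antinef}, \ref{prop:ggeq0}, Lemma~\ref{lem:p2min}, and the exceptional-divisor symmetry of Lemma~\ref{lem:excperm}, the $r \times r$ "exceptional block" of $K$ has leading part $\pm \operatorname{diag}(q_i^{-1})$ and all other entries of strictly positive $q$-order, so that a direct determinant expansion yields
\[
\tilde\chi(\tilde\lambda, 0) = \tilde\lambda^{m+2} \prod_{i=1}^{r}\bigl(\tilde\lambda - \nu_{m+i}^{-1}\bigr).
\]
Since the two factors are coprime (as $\nu_{m+i} \neq 0$), Hensel's Lemma lifts the factorisation to $\tilde\chi = G \cdot H$ in $\CC\{q, t_p\}[\tilde\lambda]$, with $G(\tilde\lambda, 0) = \tilde\lambda^{m+2}$ and $H(\tilde\lambda, 0) = \prod(\tilde\lambda - \nu_{m+i}^{-1})$. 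The $r$ roots of $H(\cdot, q)$ depend continuously on $q$ and tend to $\nu_{m+i}^{-1}$, so the corresponding $\mu_j = \tilde\lambda_j / q$ satisfy $\mu_j q_{j-1} \to 1$ after relabelling, which is the second limit.

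For the remaining $m+2$ eigenvalues I apply the Schur complement to the block decomposition $K = \bigl(\begin{smallmatrix} A & B \\ C & D \end{smallmatrix}\bigr)$, where $A$ is the $(m+2)\times(m+2)$ block on the non-exceptional indices $\{0, 1, \ldots, m, p\}$. Proposition~\ref{prop:Kij} shows $A = K_{\min} + O(q^{>0})$ and $B, C = O(q^{>0})$, while $D = \pm \operatorname{diag}(q_i^{-1}) + O(q^{>0})$. On any compact set $\Omega$ disjoint from the spectrum of $K_{\min}(0)$, the resolvent $(\lambda I - A)^{-1}$ is uniformly bounded and
\[
\det(\lambda I - K) = \det(\lambda I - A)\,\det\bigl(\lambda I - D - C(\lambda I - A)^{-1} B\bigr).
\]
The second factor equals $\det(\lambda I - D)\,(1 + O(q))$ and is non-vanishing on $\Omega$ for small $q$ since $|D_{ii}| \to \infty$. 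Hence the zeros of $\chi_{X_r}$ in $\Omega$ asymptotically coincide with those of $\det(\lambda I - A)$, and continuity of roots together with $A \to K_{\min}$ yields the first limit after relabelling.

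The main obstacle is the precise identification of $\tilde\chi(\tilde\lambda, 0)$ in the second step: Theorem~\ref{mainthm1} only controls the \emph{salient vertices} of the Newton polygon, whereas a priori many distinct $(\beta, \alpha)$ classes can contribute to each intermediate lattice point on the slope-$1$ segment. The key point is that, modulo higher-order corrections in $q$, the exceptional block of $K$ is diagonal with entries $\pm q_i^{-1}$, so the $r \times r$ principal minors dominating the coefficient of $\lambda^{m+2+k}$ are those selecting $r-k$ exceptional diagonal entries. Combined with Lemma~\ref{lem:excperm}, this forces each intermediate coefficient of $\tilde\chi(\tilde\lambda, 0)$ to be the appropriate elementary symmetric polynomial in $\nu_{m+i}^{-1}$, giving the stated product factorisation.
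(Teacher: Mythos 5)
Your proposal replaces the paper's inductive Laplace expansion along the column of the last exceptional divisor with a single change of variables $\tilde\lambda = q\lambda$, an application of Hensel's Lemma to split off the $r$ divergent roots, and a Schur complement decomposition for the remaining $m+2$. The Hensel argument for the second limit is a genuinely different and cleaner route to the divergent eigenvalues than the paper's induction, and the identification of $\tilde\chi(\tilde\lambda,0)$ via the dominance of the diagonal $(r-k)\times(r-k)$ principal minors of the exceptional block is sound (modulo the typo ``$r\times r$'' for ``$(r-k)\times(r-k)$''); the paper essentially achieves the same identification one divisor at a time via the minor $C_{ee}$.

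The argument for the first limit, however, has a real gap. In every nontrivial case the eigenvalues $\lambda_j(q)$ of $K_{\min}$ tend to zero as $q\to 0$: for a ruled surface over a curve of positive genus the Newton polygon of $\chi_{X_{\min}}$ has slope $-1/2$ so $\lambda_j = O(q^{1/2})$; for $\CP{2}$ the slope is $-1/3$ so $\lambda_j=O(q^{1/3})$; in the nef case $K_{\min}=0$ identically. The statement $\lambda_j/\mu_j\to 1$ therefore requires the \emph{leading Puiseux coefficients} of the finite roots of $\chi_{X_r}$ and $\chi_{X_{\min}}$ to agree, not merely that both sets of roots converge to the same limit set. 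Your Schur complement step only controls $\lambda$ on a compact set $\Omega$ disjoint from $\operatorname{Spec}K_{\min}(0)$, which excludes precisely the region where the finite roots live, and ``continuity of roots together with $A\to K_{\min}$'' concludes only $\mu_j\to\operatorname{Spec}K_{\min}(0)$ and $\lambda_j\to\operatorname{Spec}K_{\min}(0)$, which does not imply $\lambda_j/\mu_j\to 1$. To close the gap you would need to show, as the paper does for the terms $(K-\lambda I)_{ie}C_{ie}$ and $C_{pe}$ via the inequality~\eqref{eq:mindegineqruled}, that the perturbations $A-K_{\min}$ and $B(\lambda I-D)^{-1}C$ contribute only lattice points lying strictly above the (translated) Newton polygon of $\chi_{X_{\min}}$, so that the boundary coefficients — and hence the leading Puiseux coefficients of the roots — are unchanged. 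This is exactly the Newton-polygon bookkeeping your ``continuity'' phrasing elides.
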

If moreover $t_p \to 0$, then one gets the \emph{large radius limit point} of \cite[Section~2.3]{coates2015convergence}.
When $-c_1( X_{\min})$ is nef, the statement holds even without taking limit as it is seen directly from Proposition~\ref{prop:c1antinef}.

By Theorem~\ref{thm:Hensel}, the above means that the characteristic polynomial of $K$ breaks up, in the conical neighbourhood of $\vec{0}$, into two terms: one that can be identified with the characteristic polynomial of the minimal model and one that comes from the blow-up points. This proves Conjecture~\ref{conj:BU} in the case of smooth surfaces.

\begin{proof}
When $X_{\min}$ is a ruled surface or the projective plane, we will prove Theorem~\ref{thm:main} using induction. So assume that that the statement is true for $r-1$. For $i,j \in \{0, 1,\dots,m+r,p\}$, let us denote by $C_{ij}$ the minor of the $ij$ entry of $K-\lambda I$. For ease, we will write again $q_e$ for the variable $q_{m+r}$ corresponding to $E_r$. Let $\overline{K}$ denote the operator of $X_{r-1}$. According to Theorem~\ref{thm:appmain}, the matrix coefficients of $K$ are analytic with respect to all variables, including $t_p$. According to~\cite[Theorem~1.8]{kato2013perturbation}, the eigenvalues of $K$ form (possibly multi-valued) analytic functions of $t_p$ over a domain of the form $|t_p|<\varepsilon$ for some $\varepsilon>0$. So we can assume without loss of generality that $t_p\neq 0$.

Consider the Laplace expansion of the determinant of $K$ with respect to the column of the basis element $E_r$. By Proposition~\ref{prop:Kij} this can be written as
\begin{equation*}
\left(-\lambda-q_e^{-1}-\sum_{\beta,a}k_{\beta,a}q^{\beta}q_e^{a}\frac{t_p^{n_{\beta,a}-\varepsilon}}{(n_{\beta,a}-\varepsilon)!}\right)C_{ee} 
+\sum_{i=0}^{m+r-1}(K-\lambda I)_{ie} C_{ie}
-C_{pe}
\end{equation*}

We first claim that all Newton pairs of 
\[(K-\lambda I)_{ie}C_{ie},\quad 0 \leq i \leq m+r-1\] 
lie strictly above the Newton polygon of $K-\lambda I$. Indeed, the Newton pairs of $C_{ie}$ lie on or above the Newton polygon of $\overline{K}-\lambda I$ due to \eqref{eq:mindegineqruled} just as in the proof of  Theorem~\ref{mainthm1} while nonzero terms of $(K-\lambda I)_{ie}$ are at least of degree 2. Therefore, the Newton pairs coming from $K_{ie} C_{ie}$, $0 \leq i \leq m+r-1$ do not affect the 
asymptotic behavior of the spectrum. As a consequence, we need only consider the first and the last terms of the expansion.

Second, let us consider the polynomial in $\lambda$ corresponding to the diagonal entry $ee$. 
This term comes factored as a product of a linear term in $\lambda$ and a determinant. It follows from Proposition~\ref{prop:Kij} and the rules of determinants that $C_{ee}$ is polynomial in $q_e$ and \[C_{ee}\vert_{q_e=0}=\det(\overline{K}-\lambda I).\]

It follows from Lemma~\ref{lem:conv} that the root of the linear factor can be expressed as 
$$
    \mu_{e} = q_{e}^{-1} + O (1).
$$

Finally, let us turn our attention to the terms coming from $C_{pe}$. A monomial whose corresponding lattice point may possibly lie on or below Newton polygon 
 (up to a nonzero scalar) has the form 
\begin{equation}\label{eq:minimal_monomial}
    f(q,t,\lambda) q_I^{-1} \lambda^{r-1-|I|}
\end{equation}
for some $I \subseteq \{ m+1, \ldots, m+r-1 \}$ where
\[
    q_I^{-1} = \prod_{i\in I} q_i^{-1}\lambda. 
\]
Here $f(q,t,\lambda)$ is a summand of the minor \[|(K-\lambda I)_{ij}|_{i \in \{0,1,\dots,m,m+r\}, j \in \{0,1,\dots,m,p\}}.\]

Spelling out this submatrix one gets for the lowest degree terms
\[
\begin{pmatrix}
     -\lambda & q_ht_p & 3q_h \\
     3 & -\lambda+q_ht_p^2 & q_ht_p \\
     -1 & q_hq_e \left(q_h+\sum_{j=2}^{r} q_j\right) & q_h q_e \\
    \end{pmatrix}
\]
when $X_{\min} =\CP2$
or
\[
\begin{pmatrix}
     -\lambda & 2q_f& 0 & 0 \\
     2-2g-u & -\lambda & 0  & 0 \\
     2 & q_ft_p & -\lambda & 2q_f \\
     -1 & -q_fq_e& 0 & -\frac{u}{2}q_f^2q_e^2
    \end{pmatrix}
\]
when $X_{\min}$ is a ruled surface. A quick computation then gives that the possible values of $f(q,t,\lambda)$, after throwing away those that are surely not minimal and setting $t_p=1$, are
\begin{equation}
\label{eq:Cpelat}
f(q,1,\lambda)=
\begin{cases}
\lambda^2q_hq_e \textrm { or } \lambda q_h \textrm { or } q_h^2 & \textrm { if } X_{\min} =\CP2 \\
\lambda^3 q_f^2q_e^2 \textrm { or } \lambda q_f^3q_e^2 & \textrm { if } X_{\min} \textrm{ is a ruled surface}
\end{cases}
\end{equation}

The case $I=\{ m+1, \ldots, m+r-1 \}$ gives rise to 
\begin{equation}\label{eq:minimal_product}
        f(q,1,\lambda) \prod_{i=m+1}^{m+r-1} q_i^{-1} . 
\end{equation}
The assumption on the open set $U$ means that there exists $C>1$ such that 
$$
    C^{-1} | q_{m+r} | < | q_j | < C | q_{m+r} | \quad \textrm{ for all }\; 1 \leq j \leq m+r-1. 
$$
This implies that the lattice point corresponding to~\eqref{eq:minimal_product} in all cases of \eqref{eq:Cpelat} lies strictly above 
the Newton polygon \[(0,-r+1)- (3,-r)- (3+r,0),\]
resp.
\[ (0,-r+2)- (4,-r)- (4+r,0) \]
of $X_r$ when $X_{\min} =\CP2$, resp. when $X_{\min}$ is a ruled surface.

Using the same reasoning, for every other $I \subseteq \{ m+1, \ldots, m+r-1 \}$ the lattice point 
corresponding to the  monomial~\eqref{eq:minimal_monomial} in all cases of \eqref{eq:Cpelat}
is strictly above the appropriate Newton polygon. 
 
Let us now plug $\lambda' = \lambda q_e^{-1}$, and write 
\[
    \det (K - \lambda I) =  \det (K - q_e \lambda'  I) = q_e^{m+r+1} \det (q_e^{-1} K - \lambda'  I) =  q_e^{m+r+1} \chi' (q_e , \lambda' )
\]
with $\chi'$ monic. 
It is easy to see that the slopes of the Newton polygon of $\chi'$ are equal to those of $\det (K - \lambda I)$ minus $1$. 
In particular, since we have seen that the slopes of $\det (K - \lambda I)$ are at most $1$, we get that the slopes of $\chi'$ are at most $0$. 
This means that plugging $q_e = 0$ in $\chi'$ is now allowed, and we get 
\begin{equation}\label{eq:chi_decomp}
        \chi'( 0, \lambda') = g(\lambda' ) h(\lambda' ) 
\end{equation}
where $g' (\lambda' )$ is monic of degree $m+r$ and 
\[
      \quad h(\lambda' ) = \lambda' - 1. 
\]
We may now apply Theorem~\ref{thm:Hensel} to~\eqref{eq:chi_decomp}, and deduce that there exists a decomposition 
\[
\chi'( q_e, \lambda') = G( q_e, \lambda' ) H( q_e, \lambda' ) 
\]
that specializes to~\eqref{eq:chi_decomp} upon plugging $q_e = 0$. 
In particular, we have 
\[
    H( q_e, \lambda' ) = \lambda' - \mu_{m+r+1} 
\]
for some $ \mu_{m+r+1} =  \mu_{m+r+1} (q_1, \ldots , q_{m+r}, t_p)$ converging to $1$ as $q_1, \ldots , q_{m+r}\to 0$ in $U$. 
This then implies that 
\[
    \det (K - \lambda I) = q_e^{m+r} G( q_e, \lambda' ) \left( \lambda -  q_e^{-1} \mu_{m+r+1} \right) + P  ( q_e, \lambda )
\]
where $P  ( q_e, \lambda )$ comes from the off-diagonal entries of row $e$.
By Theorem~\ref{mainthm1} and the calculations of Section~\ref{sec:newtonpoly}, the slopes (and even the coefficients associated to the boundary points) of $q_e^{m+r} G$ are equal to those of $\det(\overline{K}-\lambda I)$, while the terms coming from $P  ( q_e, \lambda )$ are not dominant.
This finishes the induction argument. 
\end{proof}

\appendix

\section{Convergence of Gromov-Witten potentials of rational surfaces}

\subsection{Convergence of potentials}

Let $X$ be a nonsingular projective variety over $\C$. It is natural to ask if the genus 0 Gromov-Witten potential $F_X$ converges to an analytic function. As a consequence of mirror symmetry, $F_X$ is known to be convergent when $X$ is a complete toric variety, a complete flag variatey \cite{hori2000mirror} or a total space of a toric bundle such that the potential of the base variety converges \cite{koto2022convergence}. Convergence is also known for smooth quadric hypersurfaces \cite{hu2022gamma}. 

In this appendix we consider rational surfaces. As we discussed in the main body of the text, blow-ups of the projective plane are essentially the only cases to consider. But these are not toric if the number of points blown-up is sufficiently high. For an arbitrary rational surface $X$ the specialization of $F_{X}$ at $t_p=1$ was shown to converge on a nontrivial domain in \cite[Section~2.4]{itenberg2004logarithmic}. Using a similar but, as we believe, more elementary method we generalise this to the full potential $F_X$. 

\begin{theorem} 
\label{thm:appmain}
Let $X$ be a rational surface. Then the genus zero Gromov-Witten potential $F_X$ converges to an analytic function.
\end{theorem}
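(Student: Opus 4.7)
The plan is to reduce to blow-ups of $\CP{2}$ and then run an induction on the recursive relations~\eqref{eq:Rm} and~\eqref{eq:Rk} coming from the WDVV equations, producing bounds on the normalized invariants $M_{\beta,\alpha}\coloneqq N_{\beta,\alpha}/n_{\beta,\alpha}!$ that imply convergence of the power series defining $F_X$ on a polydisc around the origin. The birational classification of rational surfaces reduces the problem to $X_r\to \CP{2}$ (a minimal ruled surface blown up at enough points is a blow-up of $\CP{2}$, and when this is not immediate, Lemma~\ref{lem:ruledGWan} already handles the ruled case), so from here on I assume $X_{\min}\simeq\CP{2}$ and $X=X_r$. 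The base case is $r=0$, where the standard quantum cohomology of $\CP{2}$ gives an explicit recursion of Kontsevich--Manin whose analyticity is classical.

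The first step is to rewrite~\eqref{eq:Rm} and~\eqref{eq:Rk} in terms of the rescaled invariants $M_{\beta,\alpha}$. When one divides both sides of~\eqref{eq:Rm} by $n_{\beta,a}!$, the binomial coefficient $\binom{n_{\beta,a}-3}{n_{\beta_1,a_1}-1}$ combines with the factorial ratio $n_{\beta_1,a_1}!\,n_{\beta_2,a_2}!/n_{\beta,a}!$ to produce a factor bounded by $1/((n_{\beta,a}-3)(n_{\beta,a}-2)(n_{\beta,a}-1))$ uniformly. The same rewriting of~\eqref{eq:Rk} bounds $M_{\beta,a}$ in terms of $M_{\beta,a-1}$ and bilinear sums $M_{\beta_1,a_1}M_{\beta_2,a_2}$ with polynomial-in-$\beta$ intersection-number coefficients.

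The second step is the inductive bound. I introduce a total-degree weight $w(\beta,\alpha)=\int_\beta H +\sum_i|a_i|$ and prove by induction on $w$ that there exist constants $A,C>0$, depending only on the number of variables and on the intersection data (hence ultimately only on $r$), such that
\[
|M_{\beta,\alpha}|\leq A\cdot C^{\,w(\beta,\alpha)}.
\]
The key combinatorial input is that the number of decompositions of $(\beta,\alpha)$ into two non-trivial effective summands grows polynomially in $w(\beta,\alpha)$ in the cone of effective classes of $X_r$, while the coefficients $T_i(\beta)$ and $a_i$ appearing in~\eqref{eq:Rm}--\eqref{eq:Rk} are polynomial in $w(\beta,\alpha)$. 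These polynomial factors can then be absorbed into an enlarged $C$ by comparing with a generating function majorant: one lets $\Phi(x)=\sum A C^n x^n$ and checks that the recursion on $M$ is dominated by the squaring operation on $\Phi$, which satisfies a quadratic algebraic equation and hence is analytic near $0$.

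Once the bound $|N_{\beta,\alpha}|\leq A\cdot C^{w(\beta,\alpha)} n_{\beta,\alpha}!$ is in place, convergence of $F_X$ on a neighbourhood of the origin in the variables $(q_1,\dots,q_{m+r},t_p)$ is immediate, because $n_{\beta,\alpha}$ is itself bounded linearly in $w(\beta,\alpha)$ via $n_{\beta,\alpha}=\int_\beta c_1(X)-|\alpha|-1$, and the factor $t_p^{n_{\beta,\alpha}}/n_{\beta,\alpha}!$ absorbs the factorial. The main obstacle I foresee is the base case of the induction, i.e.\ the recursion degeneracies of~\eqref{eq:Rk} when $a=0$ and of~\eqref{eq:Rm} when $n_{\beta,a}<3$: here one must supplement the WDVV recursions with the direct inputs from Theorem~\ref{thm:gwburules}\eqref{thm:gwburules_a}--\eqref{thm:gwburules_d} and with the symmetry of Lemma~\ref{lem:excperm}, which allows one to reorganize the sums so that every term is controlled by one in which the recursion is non-degenerate. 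Producing a clean closed estimate out of these boundary terms is what the proof proper will need to do carefully.
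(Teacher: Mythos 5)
Your plan shares with the paper the reduction to blow-ups of $\CP{2}$ and the same external input at the base (the known asymptotic $N_d/(3d-1)! = O((4/5)^d d^{-7/2})$ for $\CP{2}$, from Di Francesco--Itzykson and Zinger). But it diverges from the paper's route at the inductive step, and there is a real gap there. The paper does not run a majorant argument on the absolute values in the WDVV recursions. Instead it specializes \eqref{eq:Rk} to $i=j=h$ (relation \eqref{eq:relRi}) and proves Lemma~\ref{lem:bound}: after symmetrizing over the involution $(d_1,\beta)\leftrightarrow(d_2,\gamma)$, using the equality $\binom{n_{d,\alpha}}{n_{d_1,\beta}}=\binom{n_{d,\alpha}}{n_{d_2,\gamma}}$ and the Cauchy--Schwarz bound $\bigl|\sum_k b_k c_k\bigr|\leq d_1 d_2$ from Lemma~\ref{lem:CSineq} (which in turn hinges on the genus constraint $\sum a_i^2\leq d^2+1$), the \emph{entire quadratic sum in \eqref{eq:relRi} is non-positive} and can simply be discarded from the upper bound. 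This yields the clean monotonicity $N_{d,\alpha}\leq a_i^{-1}N_{d,\alpha-[i]}$, hence $N_{d,\alpha}\leq N_d/\prod_i a_i!$ (Corollary~\ref{cor:divfact}), and the multinomial theorem then closes Proposition~\ref{prop:P2domain}.

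Your plan misses this sign structure, and without it the majorant argument does not obviously close. After normalizing by $n_{\beta,\alpha}!$, the quadratic term of \eqref{eq:Rk} carries coefficients such as $\bigl(d_1d_2-\sum b_kc_k\bigr)\bigl(d_1d_2 b_ic_i-d_1^2c_i^2\bigr)$, which even with Cauchy--Schwarz are of size $O(d_1^2d_2^2)$, and the number of decompositions in $\vdash(d,\alpha-[i])$ grows polynomially in the weight $w$. After dividing by the left-hand factor $d^2a$, the net prefactor on the bilinear sum is still an unbounded polynomial in $w$, so a fixed ansatz $|M_{\beta,\alpha}|\leq A\,C^w$ does not survive one step of induction by "absorbing polynomial factors into a larger $C$": the polynomial multiplies $A^2C^{w-1}$ and you would need $A\cdot\mathrm{poly}(w)\leq C$ uniformly in $w$. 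The linear term causes the same difficulty: its normalized coefficient is $n_{\beta,a-1}(d^2-(a-1)^2)/(d^2a)$, which is of order $d/a$ rather than $O(1)$. To salvage your route you would need either to rediscover the non-positivity of the quadratic sum, or to strengthen the inductive hypothesis with a polynomial correction (say $|M_{\beta,\alpha}|\leq A\,C^w/w^s$) and verify that the convolution over decompositions decays; neither is attempted. Two smaller remarks: the paper only uses the specialized relation \eqref{eq:relRi}, never \eqref{eq:Rm}; and the explicit domain in Proposition~\ref{prop:P2domain} allows $t_p$ large (with $q_1$ correspondingly small), which a naive polydisc-at-the-origin estimate would not give.
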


Our main ingredients are an asymptotic formula for the GW invariants of the projective plane stated in \cite{francesco1995quantum} and proved rigorously in \cite{zinger2016some} as well as the relevant case of \eqref{eq:Rk} for the plane obtained first in \cite{gottsche1998quantum}.

For $g > 0$, the genus $g$ GW potential $F^g_X$ might depend on an infinite number of variables. In this case the relevant notion is that of \emph{NF-convergence} which means convergence in an appropriate nuclear Fr\'echet space. See \cite[Definition~7.5]{coates2015convergence} for the precise definition.

\begin{corollary} Let $X$ be a rational surface. Then $F_X^g$ is NF-convergent for all $g \geq 0$.
\end{corollary}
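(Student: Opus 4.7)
The plan is to reduce the higher genus case to the genus zero case of Theorem~\ref{thm:appmain} by invoking the Givental--Teleman reconstruction theorem for semisimple cohomological field theories. The idea is that once genus zero analyticity has been established, and once one knows that the quantum cohomology of $X$ is generically semisimple, the higher genus descendent potentials are obtained from genus zero data through a procedure (the Givental quantization formula) whose inputs and operations all respect analyticity.

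First I would verify that the quantum cohomology of a rational surface $X$ is generically semisimple. For the minimal models $\CP{2}$ and $\CP{1} \times \CP{1}$ this is classical. For an arbitrary blow-up $X = X_r$, I would use Theorem~\ref{mainthm1}: the Newton polygon of $\chi_{X_r}(\lambda)$ has distinguished salient vertices whose associated monomials are unique, which forces the eigenvalues of the operator $K$ of quantum multiplication by $c_1$ to split into asymptotically distinct groups as $q \to \vec{0}$ in a direction singled out by any conical open set $U$. Combined with Theorem~\ref{thm:main}, this would imply that $K$ is regular semisimple at generic points near the large radius limit, hence $QH^*(X_r)$ is a generically semisimple Frobenius manifold.

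Next I would invoke Teleman's reconstruction theorem: on the semisimple locus, the full descendent genus $g$ potential of $X$ is given by the Givental quantization $\hat{R}_\tau \hat{T}_\tau$ applied to a product of Witten--Kontsevich $\tau$-functions, where $R_\tau$ is the $R$-matrix canonically attached to the Frobenius structure at $\tau$ and $T_\tau$ is the associated translation. The canonical coordinates and $R_\tau$ are obtained from the genus zero data by holomorphic constructions (diagonalisation of $K$ and recursive solution of a formal flat-section ODE), so Theorem~\ref{thm:appmain} will propagate to analyticity of $F^g_X$ in the primary variables $\tau$ at each fixed descendent order, with bounds sufficient to give convergence in the nuclear Fr\'echet topology of \cite[Definition~7.5]{coates2015convergence}.

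The main obstacle I expect is the last step: promoting analyticity at each fixed finite descendent order to genuine NF-convergence in infinitely many descendent variables requires uniform control on the action of $\hat{R}_\tau$ and on the Givental translation, analogous to the argument carried out in the toric setting in \cite[Section~7]{coates2015convergence}. In our non-toric setting the required estimates on $R_\tau$ must be extracted directly from the analytic genus zero Frobenius structure provided by Theorem~\ref{thm:appmain}, rather than imported via mirror symmetry, and keeping track of the nuclear Fr\'echet topology through this extraction is the delicate point.
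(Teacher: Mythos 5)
Your proposal runs along the right conceptual lines — Givental--Teleman reconstruction from a semisimple, convergent genus-zero Frobenius structure — but it leaves open exactly the step you flag as "delicate," and that step is in fact the hard part. The paper avoids having to carry out that work by citing \cite[Theorem~1.1]{coates2015convergence}, which is precisely the statement you are trying to re-derive: if the genus-zero Gromov--Witten potential of $X$ is convergent and the resulting Frobenius manifold is (formally) semisimple, then all higher-genus descendent potentials $F^g_X$ are NF-convergent. The content of that theorem is a careful propagation of convergence and nuclear-Fr\'echet estimates through the Givental quantization $\hat{R}$-action, which is several pages of analytic bookkeeping in \cite[Section~7]{coates2015convergence}. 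Absent that citation, your argument stops at the gap you yourself identify: you would need to supply the estimates on the $R$-matrix and the translation operator that make the quantization formula converge in the NF-topology, and "the inputs and operations all respect analyticity" is an assertion, not a proof, at the crucial infinite-descendent-order step.

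There is also a simpler route for the semisimplicity input than the one you propose. You suggest deducing generic semisimplicity of $QH^*(X_r)$ from Theorems~\ref{mainthm1} and~\ref{thm:main}; while it is true that those results show the eigenvalues of $K$ separate asymptotically near the large radius limit, this argument is heavier than necessary, and moreover Theorem~\ref{thm:main} is proved only after first fixing $t_p$ in the domain of convergence, so using it here to establish a hypothesis needed for convergence results would require care to avoid circularity. The paper instead cites \cite{bayer2004semisimple}, which establishes directly that quantum cohomology of a smooth projective rational surface is formally (generically) semisimple; this is exactly the hypothesis required by \cite[Theorem~1.1]{coates2015convergence}. In short: your outline is correct, but the proof is incomplete as written; the missing ingredients are the two citations to Bayer and to Coates--Iritani, the latter of which closes the gap you acknowledge.
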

\begin{proof}
The quantum cohomology of rational surfaces is known to be formally semi-simple \cite{bayer2004semisimple}. This means that the associated formal Frobenius manifold is semisimple. 
Combining Theorem~\ref{thm:appmain} with \cite[Theorem~1.1]{coates2015convergence} and the remark above it implies NF-convergence of $F_X^g$.
\end{proof}

\subsection{Notations}

Let $X=\CP2$ be the projective plane. Recall that the second homology is generated by the hyperplane class $H \in H_2(X)$. As in the main body of the text, for an $r$-tuple $\alpha=(a_1,\dots,a_r)$ of integers, $(d,\alpha)$ denotes the homology class \[dH-\sum_{i=1}^ra_iE_i\]
Recall as well that
\[ N_{d,\alpha}= I_{(d,\alpha)}(T_p^{n_{d,\alpha}})\]
where
\[ n_{d,\alpha}=3d-|\alpha|-1\] 
When $\alpha$ is empty, that is, when considering GW invariants of $X$, we will just write $n_{d}$ and $N_{d}$.

The (genus 0) Gromov-Witten potential can be written \cite[Page 8]{gottsche1998quantum} as 
\[F(q,t)=\sum_{(d,\alpha)}N_{d,\alpha}q^{d} q^{\alpha} \frac{t_p^{n_{d,\alpha}}}{n_{d,\alpha}!}\]
where 
\[
q^{d}=q_1^{d}, \quad q^{\alpha}= q_{2}^{a_1} \dots q_{r+1}^{a_r}
\]
and the sum is taken over classes $(d,\alpha) \neq 0$ satisfying $n_{d,\alpha} \geq 0$, $d \geq 0$ and $ \alpha \leq d$.

As a slight generalisation of our notions from Section~\ref{subsec:wdvv}, let the symbol $\vdash (d,\alpha)$ denote the set of pairs $((d_1,\beta), (d_2,\gamma))$ satisfying
\begin{enumerate}
    \item[(i)] $(d_1,\beta), (d_2,\gamma) \neq 0$
    \item[(ii)] $(d_1,\beta)+ (d_2,\gamma)= (d,\alpha)$
    \item[(iii)] $n_{d_1,\beta},\,n_{d_2,\gamma} \geq 0$
    
\end{enumerate}

The components of such pairs will be written as
\[ (d_1,\beta)=(d_1,(b_1,\dots,b_r)),\quad (d_2,\gamma)=(d_2,(c_1,\dots,c_r))\]

\subsection{Proof of Theorem~\ref{thm:appmain}}
\label{sec:gen0conv}
Choosing $i=j=1(=h)$ in relation \eqref{eq:Rk} and exploiting the symmetry of the exceptional divisors one gets the relation that is denoted by $R(i)$ in \cite[Theorem~3.6]{gottsche1998quantum}:
if $n_{d,\alpha} \geq 0$, then for any $1 \leq i \leq r$ 
\begin{equation}
\label{eq:relRi}
\begin{aligned}
d^2a_i N_{d,\alpha} &=  (d^2-(a_i-1)^2))N_{d,\alpha-[i]} \\
&+ \sum_{\vdash (d,\alpha-[i]), d_j >0} N_{d_1,\beta}N_{d_2,\gamma}\left(d_1d_2-\sum_{k=1}^rb_kc_k\right) \left(d_1d_2b_ic_i-d_1^2c_i^2\right)\binom{n_{d,\alpha}}{n_{d_1,\beta}}
\end{aligned}
\end{equation}

\begin{lemma}
\label{lem:CSineq}
Let $(d,\alpha)$ be such that $N_{d,\alpha}\neq 0$. Let $((d_1,\beta),(d_2,\gamma)) \vdash (d,\alpha-[i])$ corresponding to a summand in \eqref{eq:relRi}. Then
\[ \left\vert \sum_{k=1}^rb_kc_k \right\vert \leq d_1d_2\]
\end{lemma}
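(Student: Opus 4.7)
The lower bound $\sum_k b_kc_k \ge -d_1d_2$ is automatic: in any nonzero summand of~\eqref{eq:relRi} we have $N_{d_1,\beta}, N_{d_2,\gamma} \ne 0$ with $d_1,d_2>0$, so Theorem~\ref{thm:gwburules}\eqref{thm:gwburules_c} forces $b_k,c_k\ge 0$ for all $k$, and hence $\sum_k b_kc_k\ge 0$. The content of the lemma is therefore the upper bound $\sum_k b_kc_k\le d_1d_2$, which is exactly the non-negativity of the intersection pairing
\[
(d_1,\beta)\cdot(d_2,\gamma) \; = \; d_1d_2-\sum_k b_kc_k
\]
in the N\'eron--Severi lattice of $X_r$.

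I would prove this by exhibiting effective representatives $\Gamma_1,\Gamma_2$ of the two classes sharing no common irreducible component, so that positivity of proper intersection of effective cycles on a smooth surface yields the inequality. Such representatives exist as images of genus-zero stable maps through $n_{d_j,\beta_j}$ prescribed generic points, by the assumption $N_{d_j,\beta_j}\ne 0$. Whether they can be arranged to share no common component depends on the rigidity of the classes: when both $n_{d_j,\beta_j}\ge 1$, the two sets of generic points can be chosen disjoint and a dimension count then precludes common components; when some $n_{d_j,\beta_j}=0$, adjunction together with $N_{d_j,\beta_j}\ne 0$ forces $(d_j,\beta_j)$ to be the class of a single rigid irreducible $(-1)$-curve, and either (a) the other class has $n\ge 1$ and its generic representative avoids the $(-1)$-curve by moving in its family, or (b) both classes are $(-1)$-curves, in which case if they are distinct then their intersection is non-negative as distinct irreducible curves on a smooth surface.

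The main obstacle is the remaining sub-case, namely $(d_1,\beta)=(d_2,\gamma)=[E]$ for one and the same $(-1)$-curve $E=eH-\sum_k\epsilon_kE_k$ with $e>0$. Here I would invoke the hypothesis $N_{d,\alpha}\ne 0$ for the first time: the sum class satisfies
\[
(d,\alpha) \; = \; (d_1,\beta)+(d_2,\gamma) - E_i \; = \; 2[E]-E_i,
\]
and I claim this class is never effective on $X_r$, contradicting $N_{d,\alpha}\ne 0$. Indeed, supposing $2[E]-E_i=\sum_k n_k[A_k]$ with $A_k$ distinct irreducible and $n_k>0$, intersection with $E$ yields $-2-\epsilon_i<0$, which forces some $A_{k_0}=E$ with multiplicity $n_{k_0}\ge 2+\epsilon_i\ge 2$. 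Removing these $n_{k_0}$ copies of $E$ leaves an effective sum of non-$E$ irreducible curves equal to $(2-n_{k_0})[E]-E_i$: if $n_{k_0}=2$ this residual class is $-E_i$, which is not effective; if $n_{k_0}\ge 3$, its intersection with the ample class $H$ is $(2-n_{k_0})e<0$, again contradicting effectiveness. In either case one obtains a contradiction, excluding the exceptional sub-case and completing the proof.
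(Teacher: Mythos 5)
Your proof takes a genuinely different route from the paper's. The paper's argument is purely algebraic: it starts from the fact that $N_{d,\alpha}\neq 0$ forces $p_a(d,\alpha)\geq 0$, deduces from this (together with $\sum a_i\leq 3d-1$) the numerical bound $\sum a_i^2\leq d^2+1$, and then applies the Cauchy--Schwarz inequality, with a separate hands-on computation for the tight case $\sum b_k^2=d_1^2+1$, $\sum c_k^2=d_2^2+1$. Your approach instead reads $d_1d_2-\sum_k b_kc_k$ as the intersection number $(d_1,\beta)\cdot(d_2,\gamma)$ and tries to establish nonnegativity by positivity of proper intersection of effective divisors. Your observation that the lower bound is free (since Theorem~\ref{thm:gwburules}\eqref{thm:gwburules_c} forces $b_k,c_k\geq 0$) is correct and slightly sharper than the statement, and your treatment of the genuinely dangerous sub-case $(d_1,\beta)=(d_2,\gamma)=[E]$ with $E$ a $(-1)$-curve (showing $2[E]-E_i$ is never effective) is correct and rather elegant.

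However, the middle of your argument has real gaps. First, the assertion that when both $n_{d_j,\beta_j}\geq 1$ ``a dimension count precludes common components'' is not substantiated, and it is delicate: $N_{d_j,\beta_j}\neq 0$ does not directly produce an irreducible curve, only a genus-zero stable map whose image may be reducible, contain multiple covers, or have components carrying no generic point; one cannot simply deform such an image away from a putative common component. Second, the claim that $n_{d_j,\beta_j}=0$, $N_{d_j,\beta_j}\neq 0$, $d_j>0$ forces $(d_j,\beta_j)$ to be the class of a single irreducible $(-1)$-curve is asserted via ``adjunction,'' but adjunction alone only gives $D^2=2p_a(D)-1\geq -1$ and says nothing about irreducibility or about excluding $p_a(D)>0$; this characterization is true for generic blow-ups of $\CP2$ but needs a citation or a separate argument (it is not in the part of Theorem~\ref{thm:gwburules} you invoke). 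Third, the ``avoids the $(-1)$-curve by moving in its family'' step has the same issue as the first gap. By contrast, the paper's route converts $N\neq 0$ into the clean inequality $p_a\geq 0$ once and for all and avoids any reasoning about images of stable maps, which is why it closes cleanly. Your approach is a legitimate alternative in spirit, but as written it leaves these intermediate steps at the level of plausibility rather than proof.
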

\begin{proof}
The arithmetic genus of the class $(d,\alpha)$ on $X_r$ is determined by
\[ p_{a}(d,\alpha)=\frac{(d-1)(d-2)}{2}-\sum_{i=1}^r \frac{a_i(a_i-1)}{2} \]
As pointed out in \cite[Section~5.2]{gottsche1998quantum}, $N_{d,\alpha}=0$ if $p_{a}(d,\alpha) < 0$.

As $\sum_i a_i \leq 3d-1$,
we have
\begin{equation}
\label{eq:aiineq}
\sum_i a_i^2 \leq d^2-3d+2+\sum_i a_i \leq d^2-3d+2 +3d -1 = d^2+1
\end{equation}
for any $(d,\alpha)$ with $N_{d,\alpha} \neq 0$. In particular, we can have the equality $\sum_i a_i^2 = d^2+1$ only if $\sum_i a_i = 3d-1$.
Similarly, if $N_{d_1,\beta}\neq 0$ and $N_{d_2,\gamma}\neq 0$, then
\[ \sum_i b_i^2 \leq d_1^2+1 \quad \textrm{and} \quad   \sum_i c_i^2 \leq d_2^2+1\]

Suppose that $\sum_k b_k^2 \leq d_1^2$. Using the Cauchy-Schwartz inequality,
\[ \left\vert \sum_{k=1}^rb_kc_k \right\vert \leq \sqrt{\sum_{k=1}^rb_k^2} \sqrt{\sum_{k=1}^rc_k^2} \leq \sqrt{d_1^2}\sqrt{d_2^2+1} \]
for any non-zero summand in the second line of \eqref{eq:relRi}. As the left side is an integer, we in fact have 
\[ \left\vert \sum_{k=1}^rb_kc_k \right\vert \leq \sqrt{d_1^2}\sqrt{d_2^2}=d_1d_2\]
in these cases. By symmetry, we can draw the same conclusion if $\sum_k c_k^2 \leq d_2^2$.

Suppose that $\sum_k b_k^2=d_1^2+1$ and $\sum_k c^2_k=d_2+1$. Then  apply~\eqref{eq:aiineq} on $(a_k)=(b_k+c_k+\delta_{ik})$ to get that
\[
\left(\sum_k b_k^2+c_k^2+2b_kc_k\right)+2b_i+2c_i+1=\sum_{k} (b_k+c_k+\delta_{ik})^2  \leq (d_1+d_2)^2+1=d_1^2+d_2^2+2d_1d_2+1 \\
\]
It follows that in these cases
\[
\sum_kb_kc_k \leq d_1d_2-b_i-c_i-1 \leq d_1d_2
\]
as well.
\end{proof}

\begin{lemma} 
\label{lem:bound}
For any $(d,\alpha)$ and any $i$ such that $a_i > 0$,
\[ N_{d,\alpha} \leq \frac{1}{a_i} N_{d,\alpha-[i]} \]
\end{lemma}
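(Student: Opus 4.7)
The approach is to feed the recurrence~\eqref{eq:relRi} directly, which rearranges as
\[
d^2 a_i N_{d,\alpha} \;=\; (d^2 - (a_i-1)^2)\, N_{d,\alpha - [i]} \;+\; S,
\]
where $S$ denotes the splitting sum on the second line of~\eqref{eq:relRi}. Since $d^2 - (a_i-1)^2 \leq d^2$, it suffices to prove $S \leq 0$; dividing the resulting estimate $d^2 a_i N_{d,\alpha} \leq d^2 N_{d,\alpha - [i]}$ by $d^2 a_i$ then yields the lemma. The case $d = 0$ is vacuous because $a_i > 0$ together with Theorem~\ref{thm:gwburules}~\eqref{thm:gwburules_b} forces $N_{0,\alpha} = 0$.

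The plan for controlling the sign of $S$ is symmetrization in the two summands $(d_1,\beta)$ and $(d_2,\gamma)$. A direct dimension count gives
\[
n_{d_1, \beta} + n_{d_2, \gamma} \;=\; 3(d_1+d_2) - |\beta| - |\gamma| - 2 \;=\; 3d - |\alpha| - 1 \;=\; n_{d, \alpha},
\]
so the binomial coefficient $\binom{n_{d,\alpha}}{n_{d_1,\beta}}$ is invariant under the swap $(d_1,\beta) \leftrightarrow (d_2,\gamma)$. The only asymmetric factor is then $d_1 d_2 b_i c_i - d_1^2 c_i^2$, and averaging it with its image under the swap produces
\[
d_1 d_2 b_i c_i - \tfrac{1}{2}(d_1^2 c_i^2 + d_2^2 b_i^2) \;=\; -\tfrac{1}{2}(d_1 c_i - d_2 b_i)^2,
\]
which rewrites the sum as
\[
S \;=\; -\tfrac{1}{2}\!\!\sum_{\vdash(d,\alpha-[i]),\,d_j>0}\!\! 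N_{d_1,\beta}\, N_{d_2,\gamma}\Bigl(d_1 d_2 - \sum_k b_k c_k\Bigr) \binom{n_{d,\alpha}}{n_{d_1,\beta}}\, (d_1 c_i - d_2 b_i)^2.
\]
Diagonal terms with $(d_1,\beta) = (d_2,\gamma)$ already contribute $0$ to $S$, so the symmetrization introduces no ambiguity.

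To conclude, every factor on the right-hand side is non-negative: the GW invariants $N_{d_1,\beta}$ and $N_{d_2,\gamma}$ are non-negative on rational surfaces (they enumerate curves through points), the binomial coefficient and the perfect square are trivially non-negative, and the Cauchy--Schwarz-type bound $d_1 d_2 - \sum_k b_k c_k \geq 0$ is exactly Lemma~\ref{lem:CSineq}. Hence $S \leq 0$ and the lemma follows. The main (and essentially only non-trivial) step is spotting that the asymmetric bracket turns into a perfect square upon symmetrization; once this is recognized, the rest is bookkeeping combined with Lemma~\ref{lem:CSineq}.
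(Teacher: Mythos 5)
Your proof is correct and takes essentially the same route as the paper: symmetrize the splitting sum over the involution $(d_1,\beta)\leftrightarrow(d_2,\gamma)$, note the binomial coefficient is swap-invariant since $n_{d_1,\beta}+n_{d_2,\gamma}=n_{d,\alpha}$, and combine Lemma~\ref{lem:CSineq} with non-negativity of the GW invariants to conclude the sum is non-positive. Your identity $-\tfrac12(d_1c_i-d_2b_i)^2$ is exactly the paper's observation that $d_1c_i(d_2b_i-d_1c_i)+d_2b_i(d_1c_i-d_2b_i)<0$ for $d_1c_i>d_2b_i$, just phrased as an average rather than a pairing, and you additionally handle the $d=0$ edge case explicitly.
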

\begin{corollary}
\label{cor:divfact}
For any $(d,\alpha)$,
\[N_{d,\alpha} \leq \prod_{i=1}^r \frac{1}{a_i!} N_d \]
where we used the convention $0!=1$.
\end{corollary}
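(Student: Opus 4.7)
The corollary follows from the one-step estimate of Lemma~\ref{lem:bound} by a straightforward induction on the total $|\alpha| = \sum_{i=1}^r a_i$. There is nothing deep here beyond iterating the bound enough times to drive $\alpha$ down to $(0,\dots,0)$.

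For the base case $|\alpha| = 0$, every $a_i$ is zero, so $\alpha$ is the empty multi-index and the class $(d,\alpha)$ is just the pullback of $dH$. Hence $N_{d,\alpha} = N_d$, while the right-hand side equals $N_d$ as well (each factor $1/a_i! = 1/0! = 1$), and the inequality holds with equality. For the inductive step with $|\alpha| \geq 1$, pick any index $i$ with $a_i > 0$. Lemma~\ref{lem:bound} gives
\[
N_{d,\alpha} \;\leq\; \frac{1}{a_i}\, N_{d,\alpha-[i]}.
\]
The tuple $\alpha - [i]$ has non-negative entries and strictly smaller total, so the inductive hypothesis applies and yields
\[
N_{d,\alpha-[i]} \;\leq\; \frac{1}{(a_i-1)!}\prod_{j \neq i} \frac{1}{a_j!}\, N_d.
\]
Combining these and using $a_i \cdot (a_i-1)! = a_i!$ produces the desired product bound $N_{d,\alpha} \leq \prod_{j=1}^r (a_j!)^{-1}\, N_d$.

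The argument is essentially bookkeeping, so I do not expect a substantive obstacle. The only thing worth checking is that all inequalities can be chained multiplicatively with the correct direction: at every intermediate step the partial class $(d, \alpha')$ with $\alpha' \leq \alpha$ componentwise still has non-negative entries, hence by Theorem~\ref{thm:gwburules}\eqref{thm:gwburules_c} its GW invariant is a non-negative integer (and is simply zero should the count happen to vanish, in which case the bound from that step onward is trivial). Equivalently, one can phrase the same argument non-inductively by applying Lemma~\ref{lem:bound} to index $1$ a total of $a_1$ times (producing a cumulative factor $1/(a_1!)$ as one reduces $a_1, a_1-1, \dots, 1$), then to index $2$ a total of $a_2$ times, and so on, arriving directly at $N_d$ multiplied by $\prod_j 1/a_j!$.
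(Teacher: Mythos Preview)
Your proof is correct and follows the same inductive scheme as the paper (iterate Lemma~\ref{lem:bound} until $\alpha=0$; the base case $N_{d,(0,\dots,0)}=N_d$ is precisely Theorem~\ref{thm:gwburules}\eqref{thm:gwburules_a}, which the paper cites and you use implicitly). Your aside invoking Theorem~\ref{thm:gwburules}\eqref{thm:gwburules_c} for non-negativity is misplaced---that part asserts vanishing for negative $a_i$, not non-negativity of the invariants, and in any event the chaining only requires multiplying an inequality by the positive scalar $1/a_i$---but this does not affect the validity of the argument.
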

\begin{proof}
Follows from Lemma~\ref{lem:bound} using induction and Theorem~\ref{thm:gwburules}~\eqref{thm:gwburules_a} and~\eqref{thm:gwburules_d}.
\end{proof}
\begin{proof}[{Proof of Lemma~\ref{lem:bound}}]

The correspondence \[(d_1,\beta) \longleftrightarrow (d_2,\gamma)\] induces an involution on the summands of the second line of ~\eqref{eq:relRi}. We can hence rewrite this line as 
\[
\begin{multlined}
\sum_{\substack{\vdash (d,\alpha-[i]),\, d_i >0 \\ d_1c_i > d_2b_i}} N_{d_1,\beta}N_{d_2,\gamma}\left(d_1d_2-\sum_{k=1}^rb_kc_k\right) \\ \cdot \left( d_1c_i\left(d_2b_i-d_1c_i\right)\binom{n_{d,\alpha}}{n_{d_1,\beta}}+d_2b_i\left(d_1c_i-d_2b_i\right)\binom{n_{d,\alpha}}{n_{d_2,\gamma}}\right)
\end{multlined}
\]
Here we have used that if $d_1c_i = d_2b_i$, then the corresponding summand vanishes. Note that
\[ n_{d_1,\beta}+n_{d_2,\gamma}= n_{d,\alpha-[i]}-1=n_{d,\alpha}\]
and hence
\[ \binom{n_{d,\alpha}}{n_{d_1,\beta}}= \binom{n_{d,\alpha}}{n_{d,\alpha}-n_{d_1,\beta}} = \binom{n_{d,\alpha}}{n_{d_2,\gamma}} \]
By the condition $d_1c_i > d_2b_i$ we always have that
\[ d_1c_i\left(d_2b_i-d_1c_i\right)+d_2b_i\left(d_1c_i-d_2b_i\right) <0\]

Combining these facts with Lemma~\ref{lem:CSineq} it follows that the second line of~\eqref{eq:relRi} is always non-positive. As a consequence,
\[
 N_{d,\alpha}  \leq   \frac{(d^2-(a_i-1)^2))}{d^2a_i}N_{d,\alpha-[i]} = \left(\frac{1}{a_i}-\frac{1}{a_i} \frac{(a_i-1)^2}{d^2}\right) N_{d,\alpha-[i]} \leq \frac{1}{a_i}N_{d,\alpha-[i]}
\]
where we have used at the last inequality that $a_i \leq d$. 
\end{proof}
The precise claim behind Theorem~\ref{thm:appmain} is the following.
\begin{proposition}
\label{prop:P2domain}
The series
\[
\sum_{(d,\alpha)} N_{d,\alpha} q^{d}q^{\alpha} \frac{t_p^{3d-1-|\alpha|}}{(3d-1-|\alpha|)!}\] converges to an analytic function on a region containing
\[ 
\begin{cases}
\frac{4}{5}|q_1|(|t_p|+\sum_{i=1}^r|q_{i+1}|)^3 < 1 \\
q_{i+1}\neq 0, \quad 1 \leq i\leq r 
\end{cases}
\]
\end{proposition}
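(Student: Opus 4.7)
The strategy is to use Corollary~\ref{cor:divfact} together with the multinomial theorem to reduce the multivariate convergence problem to the one–variable question of the growth rate of $N_d$ on $\CP2$, which is settled by the Di Francesco--Itzykson asymptotic proved rigorously in \cite{zinger2016some}.

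First, I would peel off the finitely many contributions with $d=0$ and $\alpha=-[i]$. These produce the polar terms $q_{i+1}^{-1}$ (which is why the statement requires $q_{i+1}\neq 0$) but form a Laurent polynomial and raise no convergence issue. On the remainder of the sum, Theorem~\ref{thm:gwburules}\,\eqref{thm:gwburules_c} forces $a_i\geq 0$, so Corollary~\ref{cor:divfact} applies and gives $N_{d,\alpha}\leq N_d/(a_1!\cdots a_r!)$. Multiplying by $|q_1|^d\prod_i|q_{i+1}|^{a_i}|t_p|^{3d-1-|\alpha|}/(3d-1-|\alpha|)!$ and recognising the inner sum over $\alpha$ with $|\alpha|\leq 3d-1$ as a multinomial expansion, one obtains the termwise majorisation
\[
\sum_{(d,\alpha)}|N_{d,\alpha}|\,|q_1|^d\!\prod_i|q_{i+1}|^{a_i}\frac{|t_p|^{3d-1-|\alpha|}}{(3d-1-|\alpha|)!}
\;\leq\;\sum_{d\geq 1}\frac{N_d}{(3d-1)!}\,|q_1|^d\Bigl(|t_p|+\sum_{i=1}^r|q_{i+1}|\Bigr)^{3d-1}.
\]

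The final step is to apply the Di Francesco--Itzykson asymptotic, namely $N_d/(3d-1)!\sim C\,d^{-7/2}\rho^{-3d}$ for an explicit constant $\rho>0$ equal to the radius of convergence of $\sum_d N_d z^{3d-1}/(3d-1)!$. Inserting this bound, the right-hand side becomes (up to a convergent factor $\sum_d d^{-7/2}$) dominated by a geometric series in $\rho^{-3}|q_1|(|t_p|+\sum_i|q_{i+1}|)^3$. This converges whenever the latter quantity is strictly less than $1$, and extracting the numerical estimate $\rho^{-3}\leq 4/5$ from~\cite{zinger2016some} shows that the region in the statement is contained in the domain of absolute convergence. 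Analyticity then follows from Weierstrass's theorem.

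The main obstacle is the quantitative identification of the constant $4/5$: whereas the qualitative existence of $\rho>0$ is automatic once Zinger's asymptotic is invoked, matching the explicit numerical bound claimed in the statement requires a careful tracking of constants through the plane specialisation of the Kontsevich recursion~\eqref{eq:relRi}. The estimates built up in Lemma~\ref{lem:CSineq} and Lemma~\ref{lem:bound} should suffice once sharpened numerically, but this bookkeeping is the only non-routine part of the argument.
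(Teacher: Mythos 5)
Your proposal reproduces the paper's proof step for step: peel off the Laurent polynomial contributions, invoke Corollary~\ref{cor:divfact}, resum over $\alpha$ via the multinomial theorem, and then close with the Di~Francesco--Itzykson/Zinger growth estimate for $N_d$ and Weierstrass's M-test. One small clarification on your final paragraph: the explicit numerical bound $N_d/(3d-1)!\leq\frac{45}{16}\,(4/5)^d\,d^{-7/2}$ is stated verbatim in \cite{francesco1995quantum,zinger2016some}, so no further constant-tracking through the recursion or sharpening of Lemmas~\ref{lem:CSineq} and~\ref{lem:bound} is needed --- the $4/5$ is simply read off from the literature.
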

\begin{proof}
By Theorem~\ref{thm:gwburules} the potential has a finite number of terms with negative exponents corresponding to the exceptional divisors
\[ F(q,t)=\sum_{i=1}^r q_{i+1}^{-1} + F^+(q,t) \]
For the power series part $F^+(q,t)$ we have
\[
\begin{multlined}
\sum_{(d,\alpha)}\left\vert N_{d,\alpha} q_1^{d}q_2^{a_1}\dots q_{r+1}^{a_r} \frac{t_p^{3d-1-|\alpha|}}{(3d-1-|\alpha|)!}\right\vert \\ \leq \sum_{d} |q|^d \left(\sum_{\alpha} \left\vert \frac{1}{a_1!\dots a_r!} q_2^{a_1}\dots q_{r+1}^{a_r} \frac{t_p^{3d-1-|\alpha|}}{(3d-1-|\alpha|)!} \right\vert N_d \right) \\
= \sum_{d} |q_1|^d \frac{N_d}{(3d-1)!} \left(\sum_{\alpha}  \frac{(3d-1)!}{|\alpha|!(3d-1-|\alpha|)!)}\frac{|\alpha|!}{a_1!\dots a_r!} \left\vert q_2^{a_1}\dots q_{r+1}^{a_r} t_p^{3d-1-|\alpha|}\right\vert   \right) \\ 
=
\sum_{d} |q_1|^d \frac{N_d}{(3d-1)!} \left(\sum_{|\alpha|}  \binom{3d-1}{|\alpha|}\left(\sum_{i=1}^r|q_{i+1}|\right)^{|\alpha|}|t_p|^{3d-1-|\alpha|}   \right) \\ = \sum_{d} |q_1|^d \frac{N_d}{(3d-1)!} \left(|t_p|+\sum_{i=1}^r |q_{i+1}|\right)^{3d-1}
\end{multlined}
\]
where at the first inequality we used Corollary~\ref{cor:divfact}, and at the penultimate and last equalities we used the multinomial theorem.

In \cite[Proposition~3]{francesco1995quantum}, \cite[Corollary~3.2]{zinger2016some} it is shown that
\[ \frac{N_d}{(3d-1)!} \]
grows exponentially with $d$ and in particular
\[ \frac{N_d}{(3d-1)!} \leq \frac{45}{16} \left(\frac{4}{5}\right)^d d^{-\frac{7}{2}}\]
Therefore,
\[
\sum_{(d,\alpha)}\left\vert N_{d,\alpha} q_1^{d}q_2^{a_1}\dots q_{r+1}^{a_r} \frac{t_p^{3d-1-|\alpha|}}{(3d-1-|\alpha|)!}\right\vert \leq \sum_{d} \left( \left(|t_p|+\sum_{i=1}^r |q_{i+1}|\right)^{3d-1}\frac{45}{16} \left(\frac{4|q_1|}{5}\right)^d d^{-\frac{7}{2}} \right)
\]
The latter series converges on the region
\[ \frac{4}{5}|q_1|(|t_p|+\sum_{i=1}^r|q_{i+1}|)^3 < 1 \]
The proposition then follows by Weierstrass' M-test.
\end{proof}

\bibliographystyle{amsplain}
\bibliography{main}

\end{document}